\documentclass[12pt]{article}
\usepackage[english]{babel}
\usepackage{epsfig}
\usepackage{amsfonts}
\usepackage{amssymb}
\usepackage{amsmath}
\usepackage{amsthm}
\usepackage{latexsym}
\usepackage{graphicx}
\usepackage{bm}
\usepackage{tabularx}
\usepackage{booktabs}
\usepackage{enumerate}
\usepackage{caption}
\usepackage{wrapfig}
\usepackage{mathtools}
\usepackage{eqnarray}
\usepackage{enumitem}
\usepackage[titletoc]{appendix}
\usepackage[numbers]{natbib}
\usepackage{comment}
\usepackage{bbm}
\usepackage{url}
\usepackage{subfig}
\usepackage{hyperref}
\usepackage{color}
\usepackage{float}
\usepackage{tikz}
\usetikzlibrary{arrows.meta, positioning, shapes.geometric, calc}
\usepackage{hyperref}
\hypersetup{colorlinks=true, citecolor=blue, linkcolor=red, urlcolor=blue}
\usepackage{bbm}
\usepackage{url}
\usepackage{color}
\usepackage{xcolor}
\usepackage[section]{placeins}
\usepackage[table]{xcolor}
\usepackage{booktabs}
\usepackage{subcaption}
\newcommand{\E}{\mathbb{E}}
\newcommand{\Var}{\mathrm{Var}}
\theoremstyle{plain}
\newtheorem{theorem}{Theorem}[section]

\theoremstyle{definition}

\theoremstyle{remark}
\newtheorem{remark}{Remark}[section]

\newcommand{\Cov}{\operatorname{Cov}}

\title{Queues with Rechargeable Servers}
\author{ Eliezer Fuentes-Quezada and Jamol Pender \footnote{Corresponding Author} \\ School of Operations Research and Information Engineering \\ Cornell
University \\ {eif8@cornell.edu, jjp274@cornell.edu}
}
\date{\today}

\begin{document}
\maketitle

\begin{abstract}
We introduce an Erlang--S$^{*}$ queue with stochastic server unavailability motivated by charging dynamics in drone delivery systems. Servers enter a charging state after service with probability $p$ and return at rate $\gamma$, while customers may abandon. We develop fluid and diffusion approximations for the joint process $(Q,S)$. In strictly underloaded and overloaded regimes, the diffusion limits reduce to Ornstein–Uhlenbeck processes, enabling closed-form moment approximations and tractable staffing rules. At the critical boundary, however, the drift becomes non-smooth, and the limiting diffusion transitions into a continuous regime-switching process between two operational phases. This shift alters the covariance structure and calls for a new staffing rule driven by diffusion-scale fluctuations of the gap $Q - S$. To address this challenge, we introduce a new Gaussian closure method, which remains tractable even in the non-smooth boundary regime. Numerical experiments confirm the accuracy of the approximations and the resulting staffing prescriptions across overloaded, underloaded and critical regimes.

\end{abstract}

\section{Introduction}

Drone delivery, an emerging logistics innovation, relies on unmanned aerial vehicles (UAVs) to transport goods directly to consumers, bypassing traditional road networks and leveraging the sky, \citet{grippa2019drone, cokyasar2021designing, pinto2020network, huang2020drone, shavarani2019congested, seakhoa2019revenue, khamidehi2022dynamic, munishkin2023traffic, ghosh2023performance}. From a queueing theory perspective, drone delivery introduces a new class of service mechanisms.  Unlike traditional delivery methods such as vans and trucks that follow fixed routes and perform batch deliveries, drones may serve requests individually and adaptively, resembling a multi-server queueing system with spatially distributed arrivals.  This shift requires rethinking classical queueing models to include factors such as drone battery life (leading to server vacations), weather dependent service rates, and real-time demand clustering.  Understanding the implications of these factors is crucial for designing efficient drone delivery systems. 

Drone delivery is particularly important for urgent deliveries such as medical supplies to remote or hard to traverse places.  For example, Zipline in Rwanda, operates a drone delivery service that helps deliver vital medical supplies like blood and vaccines to remote and hard to reach medical facilities.  The drone delivery improves access to healthcare and ensuring critical supplies reach patients much faster than traditional delivery methods, see for example \cite{choi2017optimization, chowdhury2017drones, boutilier2022drone, liu2022energy, chen2024courier, lejeune2025drone}.  Moreover, it has reduced maternal mortality by ensuring timely access to healthcare supplies, \citet{zailani2020drone}. Throughout the paper we treat the Erlang-$S^*$ model as a stylized queueing model motivated by drone delivery, rather than a complete operational model for any specific system; the assumptions of identical servers, exponential charging times, and state-independent charging probabilities are standard first-model idealizations, and extensions to heterogeneous fleets and spatial structure are deferred to future work.

In addition, drone delivery offers environmental benefits by reducing carbon emissions compared to traditional delivery vehicles.  Drones are typically electric-powered, which can lead to lower greenhouse gas emissions, especially when charged using renewable energy sources, \citet{goodchild2018delivery, figliozzi2020carbon, brown2024last}.  This makes drone delivery an attractive option for companies looking to reduce their environmental footprint while also improving delivery speed and efficiency.
However, drone delivery systems face unique challenges that impact their queueing dynamics.  One of the most significant challenges is the limited battery life of drones, which necessitates periodic recharging.  This introduces a stochastic element to server availability, as drones (servers) may leave the system to recharge after completing deliveries, leading to fluctuations in service capacity.  Additionally, factors such as weather conditions, air traffic regulations, and payload limitations further complicate the service process.  These challenges require new queueing models that can accurately capture the dynamics of drone delivery systems and inform effective staffing and scheduling strategies.
\begin{table}[h!] 
\centering
\begin{tabular}{|@{}l|c|c|c|@{}|}
 \hline
\textbf{Drone Model}       & \textbf{Charging Time} & \textbf{Flight Time} & \textbf{Max Payload} \\  \hline
DJI Mavic Air 2            & 1 hr 40 min            & 34 min              & 0.2 kg              \\ \hline
DJI Matrice 300 RTK        & 1.5 hrs (per battery)  & 55 min              & 2.7 kg              \\ \hline
Autel EVO II Pro           & 1.5 hrs                & 40 min              & 0.9 kg              \\ \hline
Parrot Anafi USA           & 2 hrs                  & 32 min              & 0.5 kg              \\ \hline
Skydio X10                   & 1.5 hrs                & 40 min              & 0.4 kg              \\  \hline
\end{tabular}
\caption{Drone Models Specifications}
\end{table} \label{drone_times}

Table \ref{drone_times} summarizes the operational characteristics of several commercial drone models, highlighting their charging times, flight durations, and payload capacities, \citet{DJI_2025, Autel_2025, Parrot_2025, Skydio_2025}. These specifications directly influence the performance of a drone-based service system modeled as a queue, where drones act as servers that periodically leave to recharge. For instance, the DJI Matrice 300 RTK, with a flight time of 55 minutes and a payload of 2.7 kg, represents a high-capacity but energy-demanding server, while the DJI Mavic Air 2 and Skydio X10, with shorter flight times and lighter payloads, correspond to lower-capacity servers that must recharge more frequently. These heterogeneities translate into variability in effective service rates and server availability, key determinants of system congestion and customer waiting times.

In the context of the queueing model analyzed in this paper, the data in Table \ref{drone_times} illustrate how drone design and energy constraints shape the stochastic dynamics of the service process. Longer charging durations increase the fraction of time drones are unavailable, effectively reducing the system’s active service capacity. Conversely, models with shorter recharging cycles or longer flight durations contribute to smoother operations and lower abandonment probabilities. By capturing such trade-offs, Table \ref{drone_times} provides a realistic basis for parameterizing the model’s service and recharge rates, enabling the analysis of fluid and diffusion limits that inform energy-aware staffing.

On the other hand, with respect to the customer experience, capturing and managing wait time for deliveries is crucial.  Customers expect timely deliveries, and any delays can lead to dissatisfaction and loss of business, see for example \citet{massey2013gaussian, van2013spectral, pender2014gram, pender2014poisson, knessl2015transient, pender2015truncated, massey2018dynamic,  legros2024transient}.  Therefore, understanding and optimizing the queueing dynamics in drone delivery systems is essential for maintaining high service levels and customer satisfaction. Unfortunately, there are very few papers that study drone delivery with a queueing emphasis.  In particular, we find it important to include abandonment and charging times of drones (server vacations).  Research by \citet{dimou2011single, dudin2024queueing} explores queueing models with abandonment and server vacations, however, these papers either consider discrete time queueing models or single server queueing models.  The paper by \citet{altman2006analysis} does consider multiple servers, however, the model assumes that all idle servers go for vacations, which is not practical for this drone delivery setting. 

Finally, \citet{azriel2019erlang} studies queueing systems with stochastic server availability through an exponential vacation model motivated by call-center applications. In their framework, servers enter vacations after service completion with a fixed probability, a modeling assumption that is shared with the present paper. However, server availability in their model is restored through multiple mechanisms: servers may return from vacation spontaneously at an exponential rate, or become available immediately with a certain probability when customers are waiting in the queue. In contrast, in our model servers return from charging exclusively at an exponential rate, independent of the queue length. This modeling choice, while simpler, leads to fundamentally different system dynamics: server availability is coupled to service completions but decoupled from instantaneous queue congestion, reflecting the operational realities of drone recharging. As a result, the induced queue–server dependence, as well as the resulting fluid and diffusion limits, differ substantially from those that arise in vacation-based models with queue--dependent server return mechanisms.

The vacation-queue literature provides important methodological context for the present work. The classical foundations; the decomposition results of \citet{fuhrmann1985stochastic}, the survey of \citet{doshi1986queueing}, and the monographs of \citet{takagi1991} and \citet{tian2006vacation}, establish the steady-state theory for single-server and synchronous multi-server vacation models, typically without customer abandonment. The state-independent Bernoulli post-service vacation mechanism in the Erlang--S$^*$ model traces its lineage to the single-server Bernoulli-schedule vacation queue of \citet{keilson1986oscillating}; the present paper is the first to extend this mechanism to the many-server regime with abandonment and derive the resulting fluid and diffusion limits. The multi-server setting with impatient customers was studied by \citet{altman2006analysis}, who obtain closed-form steady-state results for M/M/c under synchronous pool vacations; the key distinction from our model is that vacations in their framework are triggered by server idleness and all servers vacation simultaneously, whereas in the Erlang--S$^*$ model vacations are individual and are triggered probabilistically upon each service completion. More recent work by \citet{yue2014analysis}, \citet{goswami2016phase}, and \citet{bouchentouf2020multiserver} extends the vacation-with-abandonment analysis to richer arrival processes and working-vacation mechanisms, but all of these papers  operate via exact matrix-analytic or generating-function methods and do not derive fluid or diffusion limits, do not work in any many-server scaling regime, and do not obtain the joint queue--server covariance $\Cov(Q,S)$. The absence of many-server asymptotic theory for vacation queues with abandonment is not incidental: the standard diffusion-limit machinery for multi-server queues requires the fluid equilibrium to lie in the interior of a smooth face of the drift, a condition that fails precisely at the critical boundary $\{q^*=s^*\}$ that arises in our model. This is the technical gap the present paper fills.

In the many-server diffusion literature, the foundational results are \citet{halfin1981heavy} for the Erlang--C queue and \citet{garnett2002designing} for the Erlang--A queue with abandonment, both of which establish one-dimensional Ornstein--Uhlenbeck (OU) limits in the QED (quality- and efficiency-driven) regime. Extensions to general patience-time distributions appear in \citet{zeltyn2005call}, and staffing rules for simultaneous delay and abandonment targets are developed in \citet{mandelbaum2009staffing}. All of these results treat the server count as a deterministic constant, so the queue--server covariance is identically zero by construction. The one-dimensional piecewise-linear diffusion theory of \citet{browne1995piecewise} underlies both the Halfin--Whitt and Erlang--A limits and is systematically developed for multidimensional piecewise-OU processes in \citet{dieker2013positive}; the Erlang--S$^*$ boundary limit is the first two-dimensional instance of this class arising from a coupled queue--server system with endogenous server availability. The effects of intermittently unavailable servers on many-server queues were studied by \citet{pang2009service} and \citet{pang2009heavy}, who obtain diffusion limits for queues with exogenous service interruptions (an environmental on/off process that modulates the entire server pool) in the QED and ED regimes. In those models the interruption process is exogenous and the limit is one-dimensional; in the Erlang--S$^*$ model the charging process is endogenous, triggered by each individual server's service completion, and the limit is genuinely two-dimensional with a nonzero $\Cov(Q,S)$. The most closely related paper technically is \citet{aveklouris2019bounds}, who derive a two-dimensional reflected OU limit with piecewise-linear drift for an electric-vehicle charging model; their limit is the nearest predecessor of the bivariate piecewise-OU structure we obtain. However, their model has no customer abandonment, the charging mechanism is a processor-sharing capacity constraint rather than a post-service Bernoulli decision, and the closed-form covariance between the queue and the server process is not characterized. At the boundary both drift branches are simultaneously active and no single Jacobian exists, so the interior Lyapunov calculation does not apply. We do not establish the corresponding limit theorem here; instead we quantify the departure from the one-sided approximations by means of a standardized distance $z(c)$ (Section~\ref{sec:boundary-diffusion}) and close the exact moment equations under a bivariate Gaussian estimation (Section~\ref{sec:gauss-moment-near-boundary}), obtaining approximations for $\operatorname{Var}(S)$ and $\operatorname{Cov}(Q,S)$ that agree with exact stationary computation to within a few percent across the boundary. The identification of the limiting piecewise-linear diffusion, and of its stationary law, is left for future work. We work throughout in the many-server fluid and diffusion scaling rather than the QED heavy-traffic regime; the QED-regime analysis of the Erlang--S$^*$ model, where the boundary between OU and switching-diffusion behavior would interact with the $\beta$ parameter of the QED scaling, is an open problem we leave for future work. The present paper is, to the best of our knowledge, the first to derive joint fluid and diffusion limits for a multi-server queue with state-independent post-service Bernoulli vacations, customer abandonment, and an endogenous stochastic server-availability process, yielding closed-form $\Cov(Q,S)$ and covariance-aware staffing rules. 

Concurrent work by \citet{asgari2026solving} studies the same Markovian queue under the correspondence \(p=1-\alpha\), \(\gamma=\xi\), and \(\theta=\gamma_{\mathrm A}\). They formulate the stationary system as an infinite-level, level-dependent QBD and use controlled truncation to compute finite-system performance measures. Related work in \citet{asgari2025queueing} derives a one-dimensional Halfin--Whitt approximation. Our approach instead retains the joint state \((Q,S)\), yielding explicit approximations for its stationary means, variances, and covariance, together with a treatment of the transition near \(Q=S\). These formulas make the effects of the model parameters transparent and avoid solving the full stationary matrix system, although they do not provide the finite-system error guarantees of the matrix-analytic approach.

Thus, in this paper, we introduce a new queueing framework that captures a fundamental feature of drone delivery systems: stochastic server availability driven by post-service charging requirements. We model drones as servers that probabilistically enter a charging state after completing service and return to availability at an exponential rate, while customers may abandon if their waiting time exceeds their patience. Within this framework, we characterize system behavior using fluid and diffusion limit theorems, deriving explicit expressions for the steady-state means, variances, and covariances of the joint queue length and server availability process. These results enable tractable staffing policies that achieve prescribed targets for delay and abandonment probabilities.

While motivated by drone delivery, the modeling framework and analytical insights developed in this paper extend naturally to a broad class of service systems with intermittently unavailable servers. In healthcare settings, for example, medical staff, diagnostic equipment, or operating rooms often require recovery, cleaning, or setup periods following service completion, during which they are temporarily unavailable. Similarly, in robotic and automated service systems, robots must periodically recharge or undergo maintenance after completing tasks, leading to endogenous fluctuations in service capacity. Related dynamics arise in shared mobility and micromobility systems, such as e-bike or scooter fleets (see for example \citet{sun2017optimal,  tao2020impact, pender2020stochasticscooter, tao2021stochastic, xie2024empowering, tao2020limit,  pla2025demand, liu2025shared}), where vehicles become unavailable while charging or undergoing battery swaps.

More broadly, the Erlang–S$^{*}$ framework captures service environments in which server availability is coupled to workload through post-service recovery, rather than determined solely by idleness or external scheduling. By explicitly accounting for this coupling and the resulting queue/server dependence, the proposed model provides a foundation for energy-aware and reliability-driven staffing decisions in modern service systems. The analytical tractability of the fluid and diffusion approximations makes the framework particularly suitable for large-scale systems where capacity planning must balance congestion, abandonment, and resource recovery constraints. 

\subsection{Contributions of the Paper}
In this section, we outline the major contributions of the paper.  
\begin{enumerate}
    \item A stochastic model (Erlang--S$^{*}$) where servers take i.i.d.~stochastic ``vacations'' to charge.
    \item Fluid limits with steady-state closed forms that highlight the systematic capacity drain from charging.
    \item Diffusion (OU) limits providing regime-specific formulas for $\Var (Q)$, $\Var (S)$, $\Cov (Q,S)$ and their approximation on the regime boundaries.
    \item Staffing rules for meeting delay and abandonment targets that explicitly incorporate covariance.
    \item We validate our results with stochastic simulation.
\end{enumerate}

\subsection{Organization of the Paper} 
In this section, we provide an outline of how this paper is organized. Section~\ref{sec:model} defines the stochastic model, its dynamics and model primitives. Section~\ref{sec:fluid} presents the fluid limit and analyzes the steady-state regimes of the fluid limit. Section~\ref{sec:diffusion} derives diffusion limits and derives new insights about the structure of the covariance. Section~\ref{sec:staffing} develops staffing schedules (deterministic and bivariate-normal) to stabilize delay probabilities. Section~\ref{sec:abandon} develops staffing schedules to stabilize abandonment probabilities. Finally, Section~\ref{sec:numerics} reports numerical experiments and all non-trivial proof details are provided in the Appendix~\ref{app:proofs}.

\section{The Erlang--S\texorpdfstring{$^*$}{*} model}
\label{sec:model}

\subsection{Model construction}
\label{subsec:model-construction}

We extend the Erlang--A queue by allowing servers to become temporarily
unavailable after service. Customers arrive according to a Poisson process
with rate $\lambda$, service times are exponential with rate $\mu$, and each
waiting customer abandons at rate $\theta$. The system contains $c$ identical
servers. After completing service, a server enters a charging state with
probability $p$ and returns after an exponential time with rate $\gamma$.
This structure models service systems with reusable resources, such as delivery
drones subject to battery constraints.

Let $Q(t)$ denote the number of customers in the system and $S(t)$ the number
of active servers, so that $c-S(t)$ servers are charging. The process
$(Q(t),S(t))$ has state space
\[
\mathbb Z_{\geq0}\times\{0,1,\ldots,c\}.
\]
At state $(q,s)$, service completions occur at rate
$\mu(q\wedge s)$, abandonments at rate $\theta(q-s)^+$, and charging
completions at rate $\gamma(c-s)$. Each service completion decreases the
number of active servers with probability $p$.

\begin{figure}[ht]
    \centering
    \includegraphics[width=0.8\linewidth]{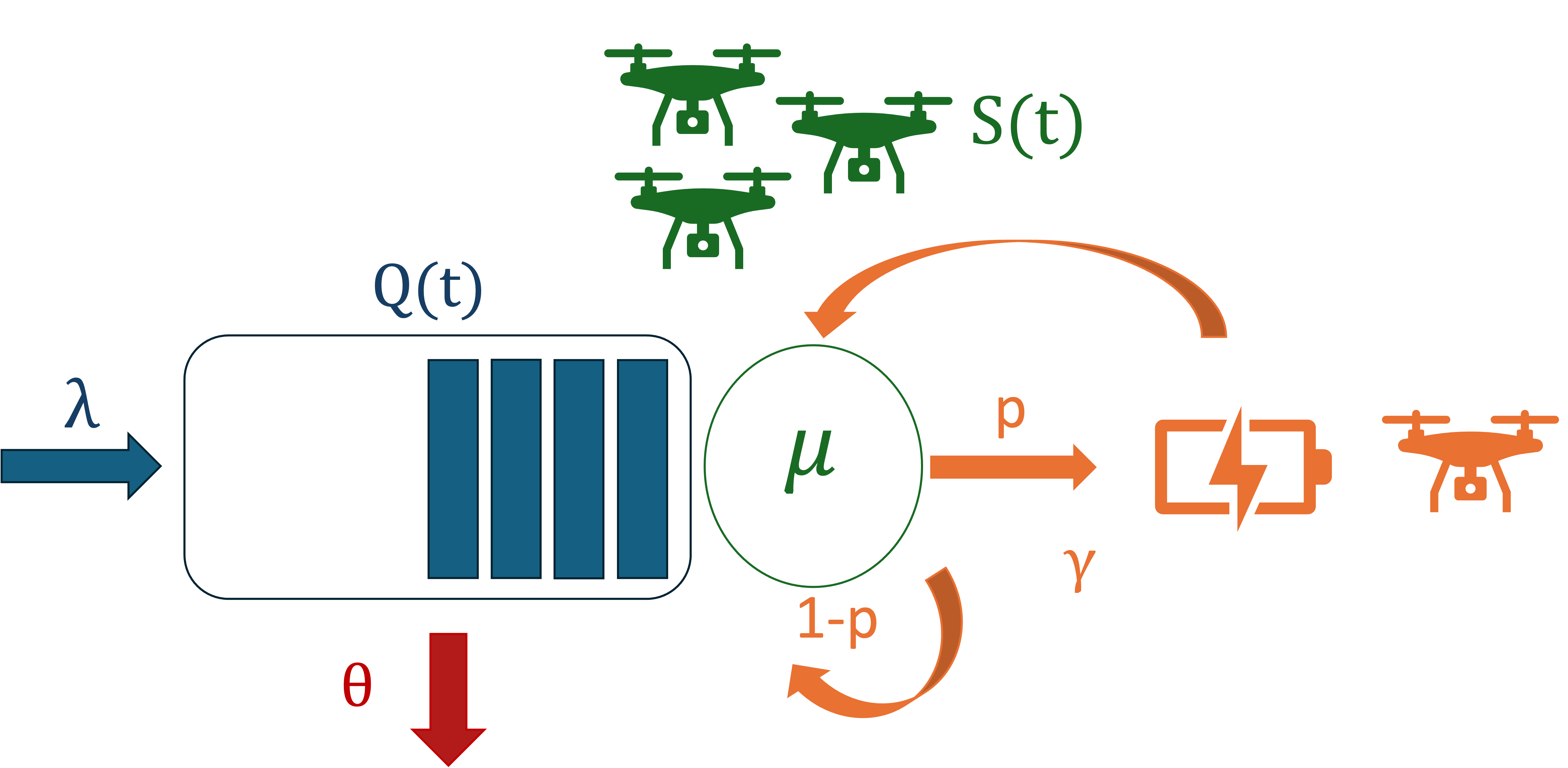}
    \caption{Schematic representation of the Erlang--$S^*$ model.}
    \label{fig:placeholder}
\end{figure}

The model is related to the Erlang--S framework of
\citet{azriel2019erlang}, which also models stochastic server availability.
The distinguishing feature here is that charging begins probabilistically
after service and ends at rate $\gamma$, independently of queue congestion.
This removes state-dependent availability controls while retaining the
interaction between demand and available capacity.

The model assumes homogeneous servers, Poisson arrivals, exponential service,
abandonment, and charging times, and no spatial constraints. These assumptions
yield a tractable Markovian model; extensions are discussed in
Section~\ref{sec:conclusions}.

The Markovian primitives considered here coincide, after a change of notation, with the stochastic-agent-unavailability model of \citet{asgari2026solving}. Whereas that work uses a level-dependent QBD formulation to compute finite-system stationary probabilities, we retain the joint queue-capacity state in order to study its fluid limit, covariance structure, and behavior near the switching boundary.

\subsection{Stochastic Dynamics}
Let $\{ N_A(t),N_B(t),N_C(t),N_{Ab}(t) \}_{t\geq 0}$ be independent unit-rate Poisson processes for arrivals, service completions, charge completions, and abandonments, respectively. Let $\{\xi_i\}$ be i.i.d.~Bernoulli($p$) indicating whether the completing server goes to charge, with $ P(\xi_i = 1) = p, \quad P(\xi_i = 0) = 1-p,$ indicating whether the server goes to charging after the $i^{th}$ service completion. Then
\begin{align}
Q(t) &= Q(0) + N_A(\lambda t) - N_B\!\left(\mu\!\int_0^t \min\{Q(s),S(s)\}\,ds\right) \nonumber\\ 
&\quad - N_{Ab}\!\left(\theta\!\int_0^t (Q(s)-S(s))^+ ds\right),
\label{eq:Q-int}\\
S(t) &= S(0) + N_C\!\left(\gamma\!\int_0^t (c-S(s))\,ds\right) - \sum_{i\le N_B\left(\mu\int_0^t \min\{Q(s),S(s)\}\,ds\right)} \xi_i.
\label{eq:S-int}
\end{align}
We can interpret the dynamics as follows: service completions remove customers; a fraction $p$ of completions send the server to charge; charging returns occur at rate $\gamma$ per charging server; abandonments occur at rate $\theta$ per waiting customer. The server process is autonomous but not independent of the queue due to service completions.

It is important to note that, in this model, unlike the classical Erlang-A system with fixed capacity, stochastic server availability creates feedback between congestion and capacity, so that variability and covariance directly affect system performance. In the following two sections, we study these second-order effects in detail.



\subsection{Forward Equations}\label{sec:forward-equations}
At state \((Q,S)\), the possible transitions are
\[
\begin{array}{ccl}
(Q,S)\longrightarrow(Q+1,S)
&\quad\text{at rate}\quad&
\lambda,
\\[1mm]
(Q,S)\longrightarrow(Q-1,S)
&\quad\text{at rate}\quad&
(1-p)\mu(Q\wedge S),
\\[1mm]
(Q,S)\longrightarrow(Q-1,S-1)
&\quad\text{at rate}\quad&
p\mu(Q\wedge S),
\\[1mm]
(Q,S)\longrightarrow(Q-1,S)
&\quad\text{at rate}\quad&
\theta(Q-S)^+,
\\[1mm]
(Q,S)\longrightarrow(Q,S+1)
&\quad\text{at rate}\quad&
\gamma(c-S).
\end{array}
\]

The infinitesimal generator of the Markov chain is therefore
\begin{align}
\mathcal{L}f(Q,S)
={}&
\lambda\bigl[f(Q+1,S)-f(Q,S)\bigr]
\nonumber\\
&+(1-p)\mu(Q\wedge S)
   \bigl[f(Q-1,S)-f(Q,S)\bigr]
\nonumber\\
&+p\mu(Q\wedge S)
   \bigl[f(Q-1,S-1)-f(Q,S)\bigr]
\nonumber\\
&+\theta(Q-S)^+
   \bigl[f(Q-1,S)-f(Q,S)\bigr]
\nonumber\\
&+\gamma(c-S)
   \bigl[f(Q,S+1)-f(Q,S)\bigr].
\label{eq:generator}
\end{align}

For any sufficiently integrable function \(f\), Dynkin's formula gives
\[
\frac{d}{dt}\E[f(Q(t),S(t))]
=
\E\!\left[
\mathcal{L}f(Q(t),S(t))
\right].
\]

Appendices~\ref{app:exact-moment-equations} and \ref{app:gaussian-moment-closure} contain the computations of the different moments from these forward equations and its Gaussian closure, which we will use in Section~\ref{sec:boundary-diffusion} and \ref{sec:boundary-staffing-rule} as a tool for the approximation near the boundary. 
\section{Fluid Approximation}
\label{sec:fluid}
We analyze the Erlang–S$^{*}$ system under fluid scaling. Let $\bar Q^{n}(t)=Q^{n}(t)/n$ and $\bar S^{n}(t)=S^{n}(t)/n$ denote the scaled queue length and server processes. Their dynamics are described by the following equations.
\begin{align}
Q^n(t) &= Q^n(0) + N_A(\lambda n t) - N_B\left( \mu \int_0^t \min(Q^n(s), S^n(s)) ds \right) \nonumber\\
&\quad - N_{Ab}\left( \theta \int_0^t (Q^n(s) - S^n(s))^+ ds \right), \\
S^n(t) &= S^n(0) + N_C\left( \gamma \int_0^t (c - S^n(s)/n) ds \right)
- \sum_{i=1}^{N_B\left( \mu \int_0^t \min(Q^n(s), S^n(s)) ds \right)} \xi_i.
\end{align}
With initial conditions $\bar Q^n(0) \Rightarrow q(0)$, $\bar S^n(0) \Rightarrow s(0)$, we have the following fluid limit theorem.
\begin{theorem}[Fluid Limit for the Erlang--$S^*$ Queue]\label{thm:fluid-limit}
Consider the scaled stochastic processes
\[
\bar Q^n(t)=\frac{Q^n(t)}{n},\qquad
\bar S^n(t)=\frac{S^n(t)}{n}.
\]
Then for every fixed $T>0$,
\[
\sup_{0\le t\le T}
\big\|(\bar Q^n(t),\bar S^n(t))
       -( q(t), s(t))\big\|
\;\xrightarrow{P}\;0,
\]
where $(q(t), s(t))$ is the unique solution on $[0,T]$ of
\begin{align}
\frac{dq(t)}{dt} &= \lambda - \mu\,\min\{q(t),s(t)\} - \theta\,(q(t)-s(t))^+, \label{eq:fluid-q}\\
\frac{ds(t)}{dt} &= \gamma\,(c - s(t)) - p\mu\,\min\{q(t),s(t)\}. \label{eq:fluid-s}
\end{align}
\end{theorem}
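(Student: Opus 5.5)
The proof follows the standard Kurtz-type argument for density-dependent Markov chains: write the scaled processes as a drift integral plus a martingale, kill the martingale with a functional law of large numbers, then close with Gronwall using the Lipschitz drift. One caveat first. The displayed equations for $Q^n,S^n$ are inconsistently scaled: $c$ appears unscaled and $S^n/n$ appears inside the charging rate. I read them in the standard many-server way, with arrival rate $n\lambda$, $c^n=nc$, and charging rate $\gamma(nc-S^n)$. After dividing by $n$, this gives
\begin{align*}
\bar Q^n(t) &= \bar Q^n(0) + \tfrac1n \hat N_A(n\lambda t) - \tfrac1n \hat N_B\!\Big(n\mu\!\int_0^t \bar Q^n\wedge \bar S^n\Big) - \tfrac1n \hat N_{Ab}\!\Big(n\theta\!\int_0^t (\bar Q^n-\bar S^n)^+\Big) + \int_0^t F_1(\bar Q^n,\bar S^n)\,du,\\
\bar S^n(t) &= \bar S^n(0) + \tfrac1n \hat N_C\!\Big(n\gamma\!\int_0^t (c-\bar S^n)\Big) - \tfrac1n\sum_{i\le N_B(\cdot)}(\xi_i-p) - \tfrac pn \hat N_B(\cdot) + \int_0^t F_2(\bar Q^n,\bar S^n)\,du.
\end{align*}
Here $\hat N(x)=N(x)-x$ are the centered Poisson processes, and $F=(F_1,F_2)$ is the right-hand side of \eqref{eq:fluid-q}--\eqref{eq:fluid-s}.

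\textbf{Step 1: drift properties.} The maps $(q,s)\mapsto q\wedge s$ and $(q,s)\mapsto(q-s)^+$ are globally Lipschitz with constant $1$ in the $\ell^1$ norm. So $F$ is globally Lipschitz with constant $L=\max\{\mu+\theta,\,\gamma+p\mu\}+\mu+\theta$, up to harmless adjustments. Picard–Lindelöf then gives existence and uniqueness of $(q,s)$ on $[0,T]$. The kink at $\{q=s\}$ does no harm at this level, since only Lipschitz continuity is needed.

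\textbf{Step 2: a priori bounds.} $\bar S^n\in[0,c]$ deterministically. $\bar Q^n(t)\le \bar Q^n(0)+N_A(n\lambda T)/n$, and by the LLN the right side is tight; it is bounded by $q(0)+\lambda T+1$ with probability tending to one. On that event, all time arguments fed into the Poisson processes are at most $nKT$ for a constant $K$.

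\textbf{Step 3: martingale terms vanish.} By the FLLN for Poisson processes (Doob's inequality on $\hat N(nx)/n$), $\sup_{x\le KT}|\hat N(nx)/n|\to0$ in probability for each of the four processes. The same holds for the Bernoulli thinning term, since $\sup_{m\le nK'}|\sum_{i\le m}(\xi_i-p)|/n\to0$ by Kolmogorov's maximal inequality. Combining this with the random time changes bounded in Step 2 shows that the total error $\epsilon_n:=\sup_{t\le T}\|M^n(t)\|$ satisfies $\epsilon_n\to0$ in probability.

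\textbf{Step 4: Gronwall.} Subtracting the integral equation for $(q,s)$ gives
\[
\|(\bar Q^n,\bar S^n)(t)-(q,s)(t)\|\le \|(\bar Q^n,\bar S^n)(0)-(q,s)(0)\|+\epsilon_n+L\int_0^t\|(\bar Q^n,\bar S^n)(u)-(q,s)(u)\|\,du.
\]
Gronwall then yields a supremum bounded by $(\|\cdot(0)\|+\epsilon_n)e^{LT}\to0$ in probability, since the initial conditions converge weakly to constants and hence in probability.

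\textbf{Main obstacle.} The main obstacle is Step 3, specifically the coupling between service completions and charging decisions. The same random time-changed process $N_B$ drives both coordinates, and the Bernoulli sum is indexed by a random number of completions. Handling it via the maximal inequality over a deterministic bound on that count, as described above, is what I would try first. A cleaner alternative is to replace it by two independent Poisson processes of rates $(1-p)\mu$ and $p\mu$ via Poisson thinning, which is legitimate in distribution and reduces everything to the plain Poisson FLLN.
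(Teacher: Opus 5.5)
Your proposal is correct and follows essentially the same route as the paper's proof in Appendix~\ref{app:fluid-proof}. Both arguments write the scaled processes as the fluid drift plus Poisson and random-sum remainders, use the $1$-Lipschitz property of $\min$ and $(\cdot)^+$ to obtain $E_n(t)\le E_n(0)+A_n(T)+L\int_0^t E_n(u)\,du$, and conclude with Gr\"onwall. The paper asserts the uniform smallness of the remainder $A_n(T)$ somewhat informally; your explicit a priori bound on the random time changes (Step 2) and the Kolmogorov maximal inequality for the Bernoulli sum make that step rigorous. Your many-server reading of the scaling ($nc$ servers, charging rate $\gamma(nc-S^n)$) is the one the paper's appendix actually uses.
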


\begin{proof}
	The proof can be found in Appendix \ref{app:fluid-proof}.
\end{proof}

Under standard functional LLNs for Poisson processes and random sums, $(\bar Q^n(t),\bar S^n(t))$ converges to the unique solution $(q(t),s(t))$ of \eqref{eq:fluid-q} and \eqref{eq:fluid-s}, which we will analyze next.

\subsection{Steady-State Regimes}
In this section, we analyze the steady state limit of the fluid limit we obtained in the previous section.

\begin{eqnarray} \label{fluid_limits_1}
\begin{split}
    \frac{d q(t)}{dt} &=& \lambda - \mu ( q(t) \wedge s(t) ) - \theta ( q(t) - s(t) )^+ \\
    \frac{d s(t)}{dt} &=& \gamma ( c - s(t)) - p \mu ( q(t) \wedge s(t) ).
\end{split}
\end{eqnarray}

\begin{theorem} \label{thm:steady_state_limits}
    Let $(q^*,s^*)$ be the steady state limits of Equations \ref{fluid_limits_1}, if $ \frac{\lambda}{\mu} + \frac{\lambda p}{\gamma} \leq c $, then
    \begin{eqnarray}
        q^* &=& \frac{\lambda}{\mu} \\
        s^* &=& c - \frac{\lambda p}{\gamma} 
    \end{eqnarray}
    and if $ \frac{\lambda}{\mu} + \frac{\lambda p}{\gamma} > c $, then we have
        \begin{eqnarray}
        q^* &=& \frac{\lambda - \frac{\gamma \mu c}{\gamma + p \mu } }{\theta} + \frac{\gamma c}{\gamma + p \mu }  \\
        s^* &=& \frac{\gamma c}{\gamma + p \mu }.
    \end{eqnarray}
\end{theorem}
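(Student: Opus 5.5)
We identify the steady states by setting both right-hand sides of \eqref{fluid_limits_1} to zero and solving the resulting piecewise-linear system on each of the two regions $\{q\le s\}$ and $\{q>s\}$. For each candidate solution we then check that it actually lies in the region whose drift branch produced it. The plan is to show that exactly one region yields a consistent solution, according to the sign of $\frac{\lambda}{\mu}+\frac{\lambda p}{\gamma}-c$. This also gives uniqueness of the equilibrium.

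\emph{Region $q^*\le s^*$.} Here $q\wedge s=q$ and $(q-s)^+=0$. The first equation becomes $\lambda-\mu q^*=0$, so $q^*=\lambda/\mu$. The second becomes $\gamma(c-s^*)=p\mu q^*=p\lambda$, so $s^*=c-\lambda p/\gamma$. The consistency condition $q^*\le s^*$ reads $\lambda/\mu\le c-\lambda p/\gamma$, which is exactly $\frac{\lambda}{\mu}+\frac{\lambda p}{\gamma}\le c$.

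\emph{Region $q^*>s^*$.} Here $q\wedge s=s$. The second equation no longer involves $q$: it reads $\gamma(c-s^*)=p\mu s^*$, giving $s^*=\gamma c/(\gamma+p\mu)$. Substituting into the first equation gives $\lambda-\mu s^*-\theta(q^*-s^*)=0$, so
\[
q^*=s^*+\frac{\lambda-\mu s^*}{\theta},
\]
which is the stated formula. The consistency condition $q^*>s^*$ is equivalent (for $\theta>0$) to $\lambda>\mu s^*=\gamma\mu c/(\gamma+p\mu)$. Rearranging, $\lambda(\gamma+p\mu)>\gamma\mu c$, and dividing by $\gamma\mu$ gives $\frac{\lambda}{\mu}+\frac{\lambda p}{\gamma}>c$.

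The two consistency conditions are complementary, so exactly one branch applies in each case. It remains to rule out a spurious equilibrium at the other branch. If the condition for one region fails, the candidate from that region violates its own region constraint, so no equilibrium of that branch exists. At the boundary case of equality the two formulas coincide, since then $\lambda=\mu s^*$ and $q^*=s^*$, so continuity of the drift causes no conflict.

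The main subtlety is interpreting ``steady state limits'' as $\lim_{t\to\infty}(q(t),s(t))$ rather than as mere equilibria, which would require global convergence of the fluid ODE. If that is intended, I would argue as follows. In each region the vector field is linear with a stable matrix: in the underloaded region it is triangular with eigenvalues $-\mu$ and $-\gamma$, and in the overloaded region it is triangular with eigenvalues $-\theta$ and $-(\gamma+p\mu)$. I would combine this with a Lyapunov-type argument to get convergence for the piecewise-linear field. One convenient route uses the $s$-equation, which gives $\dot s\le \gamma(c-s)$ and $\dot s\ge\gamma(c-s)-p\mu s$; these bounds confine $s$ and reduce the problem to a one-dimensional monotone analysis of $q$. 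I expect this global-convergence step, not the algebra, to be the real obstacle. The intended statement is most likely just the equilibrium computation above.
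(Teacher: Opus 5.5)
Your proposal is correct and follows the same route as the paper: set both drifts to zero on the face $q\le s$ and on the face $q>s$, solve the resulting linear systems, and read off the regime condition from the face constraint. You are more complete than the paper, which checks consistency only in the underloaded case and does not address uniqueness or the equality case; your global-convergence remarks go beyond anything the paper proves and are not needed for the statement as the paper treats it.
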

\begin{proof}
    We will divide the proof into underloaded (UL) and overloaded (OL) cases.  The first case considers the situation where there is no abandonment and the second case is where there is abandonment.  In the case with no abandonment in steady state, we have $q^* \leq s^*$.  Using this, we have in the differential equations set to zero that
    \begin{eqnarray} \label{fluid_limits}
    0 &=& \lambda - \mu q^*  \\
    0 &=& \gamma ( c - s^* ) - p \mu q^*.
	\end{eqnarray}
	This proves the first case and since $q^* \leq s^*$, this is also the same condition in the statement of the result.  
	The second case assumes that there is abandonment and therefore $q^* \geq s^*$.  This implies
	\begin{eqnarray} \label{fluid_limits_2}
    	0 &=& \lambda - \mu s^* - \theta ( q^* - s^* ) \\
    	0 &=& \gamma ( c - s^*) - p \mu  s^* .
	\end{eqnarray}
\end{proof}
\begin{remark}
In the underloaded regime, the steady-state effect of stochastic charging
manifests as an additive reduction in the effective number of available
servers, given by $\lambda p/\gamma$. In contrast, in the overloaded regime,
charging induces a multiplicative scaling of server availability by the factor
$\gamma/(\gamma+p\mu)$, reflecting the long-run fraction of time that servers
are available.
\end{remark}


\section{Diffusion limits and covariance structure}
\label{sec:diffusion}

The Erlang-$S^*$ model induces a nontrivial and operationally significant covariance between the queue length and the number of available servers. The fluid drift is piecewise affine, with switching boundary $q=s$. Away from this boundary, the diffusion limit is obtained by linearizing the drift on the active face. At a fixed interior equilibrium, this produces a two-dimensional Ornstein--Uhlenbeck process whose stationary covariance captures the dependence between the queue length and the number of available servers. The boundary case $q^*=s^*$ requires a separate approximation and is treated in Sections~\ref{sec:boundary-diffusion} and~\ref{sec:gauss-moment-near-boundary}.

Let
\[
\bar X^n(t)
:=
\begin{pmatrix}
\bar Q^n(t)\\
\bar S^n(t)
\end{pmatrix},
\qquad
x(t)
:=
\begin{pmatrix}
q(t)\\
s(t)
\end{pmatrix},
\]
where $x(t)$ is the fluid limit from Section~\ref{sec:fluid}, and define
\[
\widehat X^n(t):=\sqrt n\bigl(\bar X^n(t)-x(t)\bigr).
\]

\begin{theorem}[Diffusion limit away from the switching boundary]
\label{thm:diffusion_limit}
Fix $T>0$ and suppose that the fluid path remains on one face:
\[
\inf_{0\leq t\leq T}|q(t)-s(t)|>0.
\]
If
\[
\widehat X^n(0)\Rightarrow\widehat X(0),
\]
where $\widehat X(0)$ is square-integrable, then
\[
\widehat X^n\Rightarrow\widehat X
\qquad\text{in }D([0,T],\mathbb R^2),
\]
where $\widehat X$ is the unique strong solution of
\begin{equation}
d\widehat X(t)
=
J_r\widehat X(t)\,dt
+
\Sigma_r^{1/2}(t)\,dW(t).
\label{eq:interior-diffusion-limit}
\end{equation}
Here $r\in\{\mathrm{UL},\mathrm{OL}\}$ denotes the active regime, $J_r$ is
the Jacobian of the fluid drift on that face, and $\Sigma_r(t)$ is the
primitive-noise covariance-rate matrix evaluated along the fluid path.
\end{theorem}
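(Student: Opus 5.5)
Write the scaled system in martingale form and use the standard Kurtz-type strong-approximation / martingale FCLT argument. Scale the primitives as in Theorem~\ref{thm:fluid-limit}: arrival rate $n\lambda$, capacity $nc$, and all per-customer and per-server rates unchanged. Let $F(x)$ be the fluid drift in \eqref{eq:fluid-q}--\eqref{eq:fluid-s}. Decompose each Poisson term as $N(u)=u+\tilde N(u)$ with $\tilde N$ the compensated process, and write the Bernoulli sum as $p\,N_B(\cdot)+\sum_{i\le N_B(\cdot)}(\xi_i-p)$. This gives
\[
\widehat X^n(t)=\widehat X^n(0)+\widehat M^n(t)+\sqrt n\int_0^t\bigl(F(\bar X^n(u))-F(x(u))\bigr)\,du ,
\]
where $\widehat M^n$ collects the centered terms. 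These are $n^{-1/2}\tilde N_A(n\lambda t)$, $n^{-1/2}\tilde N_B(n\mu\int_0^t \bar Q^n\wedge\bar S^n)$, $n^{-1/2}\tilde N_{Ab}(\cdots)$, $n^{-1/2}\tilde N_C(\cdots)$, and $n^{-1/2}\sum_{i\le N_B}(\xi_i-p)$.

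\textbf{Step 1 (noise).} By the FCLT for Poisson processes and Donsker's theorem for the Bernoulli sums, together with the random time-change theorem and the fluid limit of Theorem~\ref{thm:fluid-limit}, $\widehat M^n\Rightarrow M$. Here $M$ is a continuous Gaussian martingale with $d\langle M\rangle_t=\Sigma_r(t)\,dt$. On the UL face, with $m(t)=\mu(q\wedge s)$, one gets $\Sigma_{11}=\lambda+m$, $\Sigma_{22}=\gamma(c-s)+p\,m$, $\Sigma_{12}=p\,m$; the OL face adds $\theta(q-s)$ to $\Sigma_{11}$. One can realize $M=\int\Sigma_r^{1/2}\,dW$.

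\textbf{Step 2 (drift linearization).} This step carries the one-face hypothesis. Let $\delta=\inf_{t\le T}|q(t)-s(t)|>0$; the fluid path stays on one face, say $q<s$ throughout (the other case is symmetric). By Theorem~\ref{thm:fluid-limit}, $\sup_{t\le T}\|\bar X^n-x\|\to0$ in probability. Hence on events $E_n$ with $P(E_n)\to1$, $\bar X^n(t)$ stays within $\delta/2$ of $x(t)$ and therefore on the same face for all $t\le T$. On that face $F$ is affine, so $\sqrt n(F(\bar X^n)-F(x))=J_r\widehat X^n$ holds exactly. For the UL face,
\[
J_{\mathrm{UL}}=\begin{pmatrix}-\mu&0\\-p\mu&-\gamma\end{pmatrix},
\]
and for the OL face,
\[
J_{\mathrm{OL}}=\begin{pmatrix}-\theta&\theta-\mu\\0&-\gamma-p\mu\end{pmatrix}.
\]
Thus on $E_n$ we have $\widehat X^n=\Phi(\widehat X^n(0)+\widehat M^n)$, where $\Phi$ maps $y$ to the solution of $z(t)=y(t)+\int_0^tJ_rz\,du$.

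\textbf{Step 3 (continuous mapping).} $\Phi$ is Lipschitz on $D([0,T],\mathbb R^2)$ with the uniform topology, by Gronwall, and it preserves continuity of the limit. So the continuous mapping theorem gives $\widehat X^n\Rightarrow\Phi(\widehat X(0)+M)$, which is the unique strong solution of \eqref{eq:interior-diffusion-limit}. Linear drift and a bounded deterministic diffusion coefficient guarantee strong existence and uniqueness, with square-integrable $\widehat X(0)$.

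\textbf{Main obstacle.} I expect the main obstacle to be Step 1. The time changes are random and involve $\bar X^n$ itself. The key point is that the random-time-change lemma needs only the uniform convergence of the integrated rates, which Theorem~\ref{thm:fluid-limit} supplies. The cross-covariance $\Sigma_{12}$ must be tracked carefully, because the same $N_B$ drives both coordinates. Joint convergence of the Bernoulli sum with $N_B$ should follow from independence of $\{\xi_i\}$ from the Poisson processes, via a joint Donsker argument and composition with the converging time change.
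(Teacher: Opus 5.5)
Your proposal is correct and follows essentially the same route as the paper. Both arguments use the martingale decomposition of the scaled process. Both then observe that the drift is exactly affine with Jacobian $J_r$ on high-probability events where $\bar X^n$ stays on the fluid path's face, establish noise convergence, and conclude by continuity of the linear solution map.

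The only difference is in the noise step. You use the Poisson FCLT, Donsker's theorem for the centered Bernoulli sum, and the random time-change theorem. The paper instead represents marked and unmarked completions as separate Poisson streams and applies the martingale FCLT, using convergence of the predictable quadratic variation and the $O(n^{-1/2})$ jump (Lindeberg) condition. Both routes give the same $\Sigma_r(t)$.
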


\begin{proof}
See Appendix~\ref{app:diffusion-proof}.
\end{proof}

When the fluid is initialized at an interior equilibrium $x^*=(q^*,s^*)$,
the coefficients in \eqref{eq:interior-diffusion-limit} are constant and the
limit is an Ornstein--Uhlenbeck process. The eigenvalues of $J_{\mathrm{UL}}$
are $-\mu$ and $-\gamma$, while those of $J_{\mathrm{OL}}$ are $-\theta$
and $-(\gamma+p\mu)$. Hence both interior equilibria are locally
exponentially stable, and the stationary covariance matrix $V_r$ is the
unique solution of
\begin{equation}
J_rV_r+V_rJ_r^\top+\Sigma_r=0.
\label{eq:stationary-lyapunov}
\end{equation}
The corresponding covariance formulas are computed below.

\subsection{Underloaded Regime ($q^*<s^*$)}\label{subsec:underloaded-regime}

\begin{theorem}\label{thm:diffusion_UL}
In the underloaded regime the Jacobian and diffusion matrices are
\[
  J_{\mathrm{UL}}=\begin{pmatrix}-\mu&0\\-p\mu&-\gamma\end{pmatrix},
  \qquad
  \Sigma_{\mathrm{UL}}=\begin{pmatrix}2\lambda&p\lambda\\p\lambda&2p\lambda\end{pmatrix},
\]
and the stationary second moments are
\[
  \Var(Q)\approx\frac{\lambda}{\mu},\qquad
  \Var(S)\approx\frac{\lambda p}{\gamma},\qquad
  \Cov(Q,S)\approx 0.
\]
\end{theorem}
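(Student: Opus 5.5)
The plan is to specialize the general interior diffusion limit, Theorem~\ref{thm:diffusion_limit}, to the underloaded face $\{q<s\}$ at the fluid equilibrium of Theorem~\ref{thm:steady_state_limits}, and then to solve the stationary Lyapunov equation \eqref{eq:stationary-lyapunov} explicitly. On the face $q<s$ we have $q\wedge s=q$ and $(q-s)^+=0$. The fluid drift \eqref{eq:fluid-q}--\eqref{eq:fluid-s} therefore becomes the affine map
\[
F(q,s)=\bigl(\lambda-\mu q,\ \gamma(c-s)-p\mu q\bigr).
\]
Differentiating gives $J_{\mathrm{UL}}$ directly, with first row $(-\mu,0)$ and second row $(-p\mu,-\gamma)$. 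Since the drift is affine on this face, no linearization error arises there.

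For $\Sigma_{\mathrm{UL}}$ I would list the jump vectors of the generator \eqref{eq:generator} together with their rates on this face:
\begin{itemize}
\item $(1,0)$ at rate $\lambda$;
\item $(-1,0)$ at rate $(1-p)\mu q$;
\item $(-1,-1)$ at rate $p\mu q$;
\item $(0,1)$ at rate $\gamma(c-s)$.
\end{itemize}
The abandonment jump is absent because $(q-s)^+=0$. The covariance-rate matrix is $\sum_k \ell_k\ell_k^\top\,\mathrm{rate}_k$. At the equilibrium $q^*=\lambda/\mu$, $s^*=c-\lambda p/\gamma$ we have $\mu q^*=\lambda$ and $\gamma(c-s^*)=p\lambda$, which gives the entries
\[
\Sigma_{11}=\lambda+\mu q^*=2\lambda,\qquad \Sigma_{12}=p\mu q^*=p\lambda,\qquad \Sigma_{22}=p\mu q^*+\gamma(c-s^*)=2p\lambda.
\]
Equivalently, these come from reading off the primitive Poisson and Bernoulli noises in \eqref{eq:Q-int}--\eqref{eq:S-int}. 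Here one has to take care that the Bernoulli thinning $\xi_i$ makes the service-completion noise shared between the two coordinates; this sharing is exactly the source of the off-diagonal term $p\lambda$.

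Next I would write $V=\begin{pmatrix}a&b\\ b&d\end{pmatrix}$ and solve $J_{\mathrm{UL}}V+VJ_{\mathrm{UL}}^\top+\Sigma_{\mathrm{UL}}=0$ entrywise. Because $J_{\mathrm{UL}}$ is lower triangular, the system is triangular and can be solved in order.
\begin{itemize}
\item The $(1,1)$ entry gives $-2\mu a+2\lambda=0$, so $a=\lambda/\mu$.
\item The $(1,2)$ entry gives $-(\mu+\gamma)b-p\mu a+p\lambda=0$. The terms $p\mu a$ and $p\lambda$ cancel exactly, so $b=0$.
\item The $(2,2)$ entry gives $-2p\mu b-2\gamma d+2p\lambda=0$, so $d=p\lambda/\gamma$.
\end{itemize}
Uniqueness of the solution follows because the eigenvalues $-\mu$ and $-\gamma$ are strictly negative, as noted after Theorem~\ref{thm:diffusion_limit}.

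The final step is to interpret the $\approx$ signs. The limit $\widehat X$ describes $\sqrt n(\bar X^n-x^*)$, so the prelimit variances are approximately $nV$. Writing the model in unscaled form, with $\lambda$ and $c$ carrying the factor $n$, gives $\Var(Q)\approx\lambda/\mu$, $\Var(S)\approx\lambda p/\gamma$ and $\Cov(Q,S)\approx0$.

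The main subtlety, and the step I would treat most carefully, is justifying that stationary moments of the prelimit chain are approximated by those of the stationary OU process. This requires an interchange of the limits $t\to\infty$ and $n\to\infty$, which Theorem~\ref{thm:diffusion_limit} alone (a finite-horizon statement) does not provide. I would try to obtain it from tightness of the scaled stationary distributions, via a quadratic Lyapunov function for the generator \eqref{eq:generator} on the underloaded face. That function would be combined with the fact that, with high probability, the stationary fluid state stays a positive distance from the boundary $q=s$, which holds because $q^*<s^*$ strictly. Failing a full proof of this interchange, I would state the formulas as the standard heuristic diffusion approximation, which is what the $\approx$ in the statement suggests.
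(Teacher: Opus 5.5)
Your proposal is correct and follows the paper's proof essentially step by step. You restrict the drift to the face $q<s$ to read off $J_{\mathrm{UL}}$, assemble $\Sigma_{\mathrm{UL}}$ from the same four-event table using $\mu q^*=\lambda$ and $\gamma(c-s^*)=p\lambda$, and solve the Lyapunov equation to obtain $V_{\mathrm{UL}}=\mathrm{diag}(\lambda/\mu,\lambda p/\gamma)$; your entrywise triangular solve is simply a more explicit version of the paper's ``unique solution'' step. Your caveat about interchanging $t\to\infty$ and $n\to\infty$ goes beyond the paper, which identifies the stationary OU covariance with the approximate stationary moments without further justification, consistent with the $\approx$ in the statement.
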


\begin{proof}
The proof is in Appendix~\ref{app:underloaded-proof}.
\end{proof}

\subsection{Overloaded Regime ($q^*>s^*$)}\label{subsec:overloaded-regime}

\begin{theorem}\label{thm:diffusion_OL}
In the overloaded regime the Jacobian and diffusion matrices are
\[
  J_{\mathrm{OL}}=\begin{pmatrix}-\theta&\theta-\mu\\0&-(\gamma+p\mu)\end{pmatrix},
  \qquad
  \Sigma_{\mathrm{OL}}=\begin{pmatrix}2\lambda&p\mu s^*\\p\mu s^*&2p\mu s^*\end{pmatrix},
\]
and the stationary second moments are
\begin{align*}
  \Var(Q)  &\approx\frac{\lambda}{\theta} + \frac{\theta - \mu}{\theta}\frac{c\,\gamma p\mu}{(\gamma+p\mu)^2}
  \cdot\frac{\gamma+\theta+p\mu-\mu}{\theta+\gamma+p\mu},\qquad
  \Var(S)  \approx\frac{c\,\gamma p\mu}{(\gamma+p\mu)^2},\\[4pt]
  \Cov(Q,S)&\approx
  \frac{c\,\gamma p\mu}{(\gamma+p\mu)^2}
  \cdot\frac{\gamma+\theta+p\mu-\mu}{\theta+\gamma+p\mu}.
\end{align*}
\end{theorem}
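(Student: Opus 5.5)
The plan is to read off $J_{\mathrm{OL}}$ and $\Sigma_{\mathrm{OL}}$ from the generator \eqref{eq:generator} restricted to the face $\{q>s\}$. Theorem~\ref{thm:diffusion_limit}, started at the equilibrium $(q^*,s^*)$ of Theorem~\ref{thm:steady_state_limits} (overloaded case), gives an OU process with these coefficients. Its stationary covariance solves the Lyapunov equation \eqref{eq:stationary-lyapunov}, which we solve entrywise; the structure of $J_{\mathrm{OL}}$ allows back-substitution.

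\textbf{Jacobian.} On $\{q>s\}$ we have $q\wedge s=s$ and $(q-s)^+=q-s$. The fluid drift \eqref{eq:fluid-q}--\eqref{eq:fluid-s} is therefore affine:
\[
f_1=\lambda-\mu s-\theta(q-s),\qquad f_2=\gamma(c-s)-p\mu s .
\]
Differentiating gives $J_{\mathrm{OL}}$ directly. Its zero lower-left entry reflects that $S$ does not feel $Q$ on this face. It is upper triangular with eigenvalues $-\theta$ and $-(\gamma+p\mu)$, so it is Hurwitz and the Lyapunov solution exists and is unique.

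\textbf{Noise matrix.} $\Sigma_{\mathrm{OL}}=\sum_j \ell_j\ell_j^\top\,\mathrm{rate}_j$ over the transitions listed in Section~\ref{sec:forward-equations}, with rates evaluated at $(q^*,s^*)$.
\begin{itemize}
\item The $QQ$ entry is $\lambda+\mu s^*+\theta(q^*-s^*)$. This equals $2\lambda$ by the first equilibrium equation \eqref{fluid_limits_2}.
\item The $SS$ entry is $\gamma(c-s^*)+p\mu s^*$. This equals $2p\mu s^*$ by the second equilibrium equation.
\item The cross entry comes only from the joint jump $(-1,-1)$ at rate $p\mu s^*$.
\end{itemize}

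\textbf{Lyapunov equation.} Write $V=\begin{pmatrix}V_{QQ}&V_{QS}\\V_{QS}&V_{SS}\end{pmatrix}$ and solve in the order $SS$, $QS$, $QQ$.
\begin{itemize}
\item The $(2,2)$ entry gives $-2(\gamma+p\mu)V_{SS}+2p\mu s^*=0$. With $s^*=\gamma c/(\gamma+p\mu)$ this yields $V_{SS}=c\gamma p\mu/(\gamma+p\mu)^2$.
\item The $(1,2)$ entry gives $-(\theta+\gamma+p\mu)V_{QS}+(\theta-\mu)V_{SS}+p\mu s^*=0$. Using $p\mu s^*=(\gamma+p\mu)V_{SS}$, this gives
\[
V_{QS}=V_{SS}\,\frac{\gamma+\theta+p\mu-\mu}{\theta+\gamma+p\mu}.
\]
\item The $(1,1)$ entry gives $-2\theta V_{QQ}+2(\theta-\mu)V_{QS}+2\lambda=0$, i.e.
\[
V_{QQ}=\frac{\lambda}{\theta}+\frac{\theta-\mu}{\theta}\,V_{QS},
\]
which is the stated formula.
\end{itemize}

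\textbf{Main obstacle.} The algebra is routine. The point needing care is justifying the step from the OU stationary covariance to the ``$\approx$'' for the prelimit stationary moments: $\Var(Q^n)/n\to V_{QQ}$, and similarly for the other entries, after undoing the scaling. Theorem~\ref{thm:diffusion_limit} only gives process convergence on $[0,T]$.

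I would state the result as the stationary law of the limiting OU process, which is Gaussian with covariance $V$. An interchange of limits would additionally require tightness of the stationary distributions, for instance via a quadratic Lyapunov function $x^\top P x$ with $PJ_{\mathrm{OL}}+J_{\mathrm{OL}}^\top P\prec0$. That argument is available only while the process stays away from $\{q=s\}$, which holds with high probability since $q^*-s^*>0$ at fluid scale. I would flag this as the heuristic part of the statement, consistent with the ``$\approx$'' notation, rather than prove it fully.
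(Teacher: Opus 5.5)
Your proposal is correct and follows essentially the same route as the paper. You restrict the drift to the face $\{q>s\}$ to obtain $J_{\mathrm{OL}}$, assemble $\Sigma_{\mathrm{OL}}$ from the transition increments at $(q^*,s^*)$ (simplified via the equilibrium identities), and solve the triangular Lyapunov equation by back-substitution in the order $V_{SS}$, $V_{QS}$, $V_{QQ}$, which reproduces the paper's stated solution. Your caveat that the ``$\approx$'' for the prelimit stationary moments would need an interchange of the $n\to\infty$ and $t\to\infty$ limits is also left implicit in the paper rather than proved.
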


\begin{proof}
The proof is in Appendix~\ref{app:overloaded-proof}.
\end{proof}


\subsection{A standardized boundary region}
\label{sec:boundary-diffusion}

\begin{figure}[ht]
    \centering
    \includegraphics[width=0.5\linewidth]{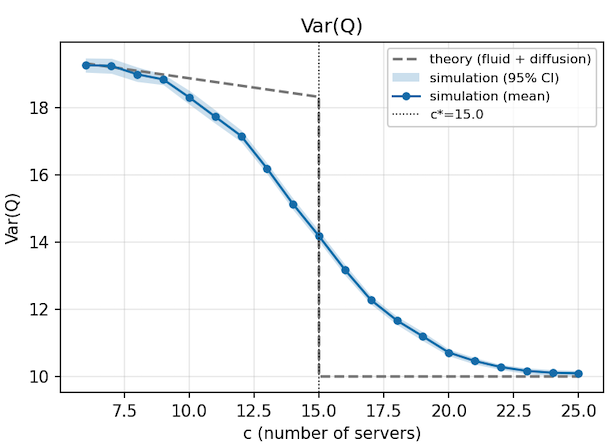}~\includegraphics[width=0.5\linewidth]{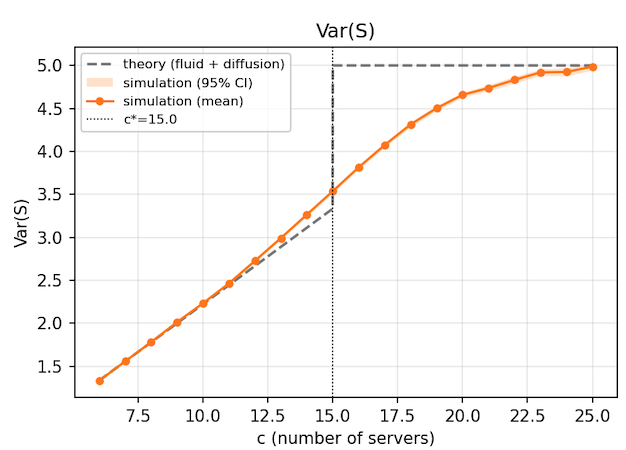}
    \caption{Variance of $Q$ and $S$ gap between fluid and diffusion theory and simulation. Parameters $\lambda =10,\, \mu = 1,\, \theta = 0.5,\, p=0.5,$ and $\gamma = 1$.}
    \label{fig:variances_sim_vs_theory}
\end{figure}

Read as functions of the staffing level, the two approximations of Sections~\ref{subsec:underloaded-regime} and~\ref{subsec:overloaded-regime} disagree with each other at the critical capacity $c^*=\lambda/\mu+\lambda p/\gamma$. For the parameters of Figure~\ref{fig:variances_sim_vs_theory}, where $c^*=15$, the overloaded formulas give $\Var(Q)=18.3$, $\Var(S)=3.3$ and $\Cov(Q,S)=1.7$, while the underloaded formulas give $10.0$, $5.0$ and $0$: the predicted variance of $Q$ falls by $45\%$ and the covariance vanishes, both discontinuously, as $c$ crosses $c^*$. The simulated moments, approximately $(14.2,3.6,0.4)$, vary smoothly across the boundary. Thus, the discontinuity is caused by linearizing the drift on only one of the two faces $q<s$ and $q>s$; near $q=s$, the process visits both faces and neither one-sided approximation captures the full local dynamics.

To identify this transition region, define
\[
d^*(c):=s^*(c)-q^*(c),
\qquad
\sigma^2_D(c):=\Var(S-Q)
       =v_{qq}(c)+v_{ss}(c)-2v_{qs}(c),
\]
where $\sigma^2_D(c)$ is computed from the corresponding one-sided diffusion
approximation. With
\[
\kappa=\frac{\gamma}{\gamma+p\mu},
\]
the equilibrium gap is
\begin{equation}
d^*(c)=
\begin{cases}
\dfrac{\mu\kappa}{\theta}(c-c^*), & c<c^*,\\[2mm]
c-c^*, & c\geq c^*.
\end{cases}
\label{eq:fluid-equilibrium-gap}
\end{equation}
The standardized distance to the switching boundary is therefore
\begin{equation}
z(c):=\frac{d^*(c)}{\sqrt{\sigma^2_D(c)}}.
\label{eq:standardized-boundary-distance}
\end{equation}
Under a normal approximation for $S-Q$,
\[
\Pr(Q<S)\approx\Phi(z(c)),
\qquad
\Pr(Q\geq S)\approx\Phi(-z(c)).
\]

We consequently define the two-standard-deviation boundary region by
\begin{equation}
\mathcal{B}_2:=\{c:|z(c)|\leq2\}.
\label{eq:two-sd-boundary-region}
\end{equation}
Outside this region, the system occupies one face with approximate probability
at least $\Phi(2)\approx0.977$, motivating the use of the underloaded
approximation when $z(c)>2$ and the overloaded approximation when $z(c)<-2$.
Inside $\mathcal{B}_2$, switching between the two faces is too frequent to
ignore, so we use the Gaussian moment closure described next. The threshold
$2$ is an operational criterion rather than an asymptotic result.

\subsection{Gaussian moment closure near the boundary}\label{sec:gauss-moment-near-boundary}
Inside $\mathcal{B}_2$, we close the exact moment equations from
Appendix~\ref{app:gaussian-moment-closure} by approximating
\begin{equation}
\begin{pmatrix}Q\\S\end{pmatrix}
\approx
\mathcal{N}\left(
\begin{pmatrix}m_Q\\m_S\end{pmatrix},
\begin{pmatrix}
v_{qq} & v_{qs}\\
v_{qs} & v_{ss}
\end{pmatrix}
\right).
\label{eq:bivariate-normal-boundary-closure}
\end{equation}
Let
\[
m_D:=m_Q-m_S,
\qquad
\sigma_D^2:=v_{qq}+v_{ss}-2v_{qs},
\qquad
a:=\frac{m_D}{\sigma_D}.
\]
Since $D:=Q-S\approx\mathcal{N}(m_D,\sigma_D^2)$,
\begin{align}
\E[(Q-S)^+]
&\approx \sigma_D\phi(a)+m_D\Phi(a),
\label{eq:normal-positive-part-boundary}\\
\E[Q\wedge S]
&\approx m_Q-\sigma_D\phi(a)-m_D\Phi(a).
\label{eq:normal-minimum-boundary}
\end{align}
Together with the Gaussian covariance identities stated in
Appendix~\ref{app:gaussian-moment-closure}, these expressions produce a closed
nonlinear system for
\[
m_Q,\qquad m_S,\qquad v_{qq},\qquad v_{ss},\qquad v_{qs}.
\]
Its stationary solution accounts for both faces through the congestion weight
$\Phi(a)$ and provides a continuous transition between the two regime-specific
approximations.

For comparison, a simpler heuristic directly interpolates between the
one-sided diffusion formulas. For any second-order quantity $M$, define
\begin{equation}
M_{\mathrm{int}}(c)
=
\bigl(1-\Phi(z(c))\bigr)M_{\mathrm{OL}}(c)
+
\Phi(z(c))M_{\mathrm{UL}}(c).
\label{eq:gaussian-regime-mixture}
\end{equation}
This interpolation approaches the overloaded formula as $z(c)\to-\infty$ and
the underloaded formula as $z(c)\to+\infty$, but unlike the moment closure, it
is not derived from the system's moment equations.

Figure~\ref{fig:variances_vs_gauss_clos} compares both boundary corrections
with simulation.

\begin{figure}[ht]
    \centering
    \includegraphics[width=0.49\linewidth]
    {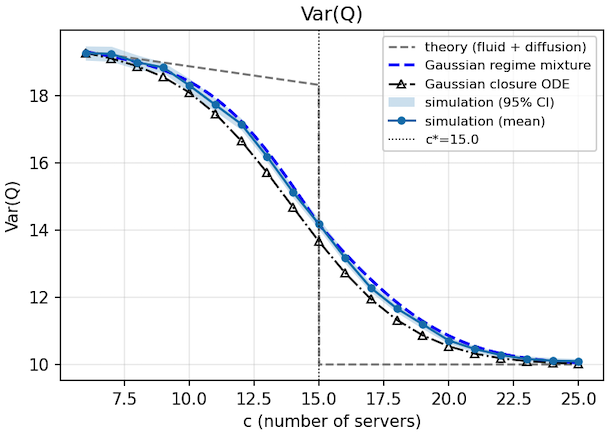}
    \hfill
    \includegraphics[width=0.49\linewidth]
    {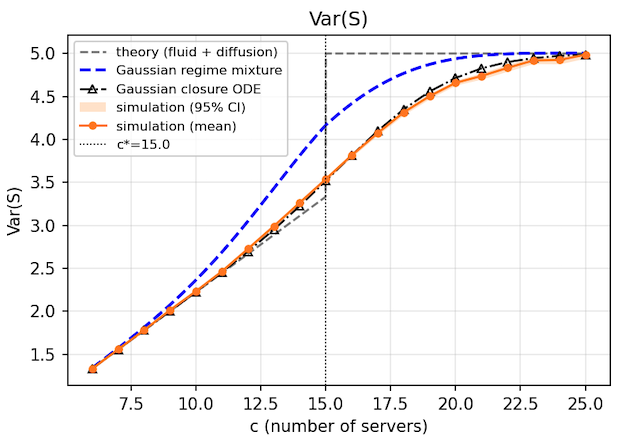}
    \caption{Simulated variances of $Q$ and $S$ compared with the Gaussian
    moment closure and the weighted interpolation. Parameters:
    $\lambda=10$, $\mu=1$, $\theta=0.5$, $p=0.5$, and $\gamma=1$.}
    \label{fig:variances_vs_gauss_clos}
\end{figure}


\section{Staffing for Delay Targets}
\label{sec:staffing}
A central performance measure in our system is the probability of delay
\begin{eqnarray}
    \mathbb{P} \left( Q \geq S  \right),
\end{eqnarray}
the probability that an arrival must wait before service begins. In the classical Erlang-S model the server capacity is a fixed constant. This performance measure is valuable for making staffing decisions and making sure that service level agreements (SLA's) are met. However, in our setting, the number of available servers, $S$, fluctuates because servers without sufficient charge are forced to charge up. This section develops two staffing approximations: one assuming a deterministic approximation for $S$ and one incorporating the stochastic fluctuations of $S$.

\subsection{Deterministic Approximation for Number of Servers}\label{sec:delay-determ}

Suppose $Q$ is approximately normal, $Q \sim N(q^*, q^*)$, and replace the random available-server count by its fluid steady-state limit i.e. $S = s^*$. And let $\Phi(x)$ and $\varphi(x)$ be the cdf and pdf of a normal distribution. Then 

\begin{eqnarray}
    \mathbb{P} \left( Q \geq s^*  \right) =  \overline{\Phi} \left( \frac{s^* -q^*}{\sqrt{q^*}} \right) = \epsilon.
\end{eqnarray}

\begin{theorem}[Deterministic-server staffing]\label{thm:determ_server_staff-delay}
    Let $q^*$ and $s^*$ denote the steady-state fluid limits of the queue length and available-server process, respectively. If the delay probability target is $\epsilon \in (0,1)$ and $Q \sim N(q^*,q^*)$, $S=s^\ast$, then the staffing level that achieves 
    \begin{eqnarray}
        \mathbb{P} \left( Q \geq S  \right) = \epsilon.
    \end{eqnarray}
    satisfies 
    \begin{eqnarray}\label{eq:delay-prob}
        s^* = q^* + \sqrt{q^*} \overline{\Phi}^{-1} \left( \epsilon \right).
    \end{eqnarray}
    In the under-loaded regime the fluid relations $q^* =\lambda/\mu$ and $s^* = c-\lambda p /\gamma$ yield
    \begin{eqnarray}
        c_\epsilon = \frac{\lambda p}{\gamma} + \frac{\lambda}{\mu} + \sqrt{\frac{\lambda}{\mu}} \overline{\Phi}^{-1} \left( \epsilon \right).
    \end{eqnarray}
    In the overloaded regime, where $q^* = \frac{\lambda - \frac{\gamma \mu c}{\gamma + p \mu } }{\theta} + \frac{\gamma c}{\gamma + p \mu }$ and $s^* = \frac{\gamma c}{\gamma + p \mu }$, the same approximation yields 
    \begin{equation}
    \label{eq:staff-over}
    c_\epsilon \;=\;
    \frac{\gamma + p \mu}{2 \gamma \mu^{2}}
    \left[
    2 \lambda \mu \;-\; \theta(\mu - \theta)\,z_\epsilon^{2}
    \;+\;
    \sqrt{\Big(2 \lambda \mu - \theta(\mu - \theta)\,z_\epsilon^{2}\Big)^{2}
    - 4 \lambda \mu^2 (\lambda - \theta\, z_\epsilon^{2})}
    \right].
    \end{equation}
    where $z_\epsilon = \overline{\Phi}^{-1} \left( \epsilon \right).$

\end{theorem}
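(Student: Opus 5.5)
The plan is to treat the statement as the algebraic consequence of the Gaussian heuristic preceding it. The general relation \eqref{eq:delay-prob} comes first. Under $Q\sim N(q^*,q^*)$ and $S=s^*$ we have $\mathbb P(Q\ge s^*)=\overline{\Phi}\big((s^*-q^*)/\sqrt{q^*}\big)$. Since $\overline{\Phi}$ is a strictly decreasing bijection $\mathbb R\to(0,1)$, setting this equal to $\epsilon$ is equivalent to $(s^*-q^*)/\sqrt{q^*}=z_\epsilon$, which rearranges to \eqref{eq:delay-prob}. No continuity correction is needed, because $Q$ is treated as continuous. Each regime is then handled by substituting the fluid equilibria of Theorem~\ref{thm:steady_state_limits} into \eqref{eq:delay-prob} and solving for $c$.

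In the underloaded regime $q^*=\lambda/\mu$ does not depend on $c$, and $s^*=c-\lambda p/\gamma$ is affine in $c$. Relation \eqref{eq:delay-prob} is therefore linear in $c$ and gives $c_\epsilon=\lambda p/\gamma+\lambda/\mu+\sqrt{\lambda/\mu}\,z_\epsilon$ directly.

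In the overloaded regime I would write $x:=s^*=\kappa c$ with $\kappa=\gamma/(\gamma+p\mu)$. Then $q^*=(\lambda+(\theta-\mu)x)/\theta$ is affine in $x$, and
\[
s^*-q^*=\frac{\mu x-\lambda}{\theta}.
\]
Relation \eqref{eq:delay-prob} becomes $(\mu x-\lambda)/\theta=z_\epsilon\sqrt{(\lambda+(\theta-\mu)x)/\theta}$. Squaring and multiplying by $\theta^2$ gives the quadratic
\[
\mu^2x^2-\bigl(2\lambda\mu+\theta(\theta-\mu)z_\epsilon^2\bigr)x+\lambda(\lambda-\theta z_\epsilon^2)=0 .
\]
Its roots are $x=\bigl[B\pm\sqrt{B^2-4\lambda\mu^2(\lambda-\theta z_\epsilon^2)}\bigr]/(2\mu^2)$, where $B=2\lambda\mu-\theta(\mu-\theta)z_\epsilon^2$. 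Multiplying by $1/\kappa=(\gamma+p\mu)/\gamma$ gives exactly the expression \eqref{eq:staff-over} when the $+$ root is taken.

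The main obstacle is the root selection. Squaring can introduce a spurious solution, so the chosen root must also satisfy the unsquared equation: it needs $q^*>0$, a nonnegative discriminant, and $\operatorname{sign}(\mu x-\lambda)=\operatorname{sign}(z_\epsilon)$. I would argue as follows. The left side $(\mu x-\lambda)^2$ is a convex parabola vanishing at $x=\lambda/\mu$, and the right side $\theta z_\epsilon^2(\lambda+(\theta-\mu)x)$ is a line that is positive at $x=\lambda/\mu$ (where it equals $\theta z_\epsilon^2\lambda\theta/\mu$). The two curves therefore cross once on each side of $\lambda/\mu$. The larger root, which is the $+$ sign, is the one with $\mu x>\lambda$, i.e. $z_\epsilon\ge0$; the smaller root corresponds to the branch $z_\epsilon<0$. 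Positivity of the discriminant follows from the same crossing argument.

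I would also remark on consistency. Strict overloading $q^*>s^*$ forces $s^*-q^*<0$, so the overloaded formula with the $+$ root is only self-consistent at the boundary $z_\epsilon=0$. Otherwise it should be read as the formula obtained by using the overloaded fluid expressions in \eqref{eq:delay-prob}, with validity to be checked against the standardized distance $z(c)$ of Section~\ref{sec:boundary-diffusion}. For $\epsilon>1/2$, the $-$ root would be the correct one.
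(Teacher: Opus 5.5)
Your proposal is correct and follows the same route as the paper: invert $\overline{\Phi}$ to get $s^*=q^*+\sqrt{q^*}\,z_\epsilon$ and substitute the fluid equilibria. In the underloaded case this is linear in $c$; in the overloaded case you square to a quadratic whose $+$ root gives \eqref{eq:staff-over}. Your discriminant term $4\lambda\mu^2(\lambda-\theta z_\epsilon^2)$ agrees with the statement, whereas the final display of the paper's appendix proof drops a factor of $\mu$ there.

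Your root-selection check is something the paper's proof omits, and your conclusion is right. The $+$ root solves the unsquared equation only when $z_\epsilon\ge 0$, and then $c_\epsilon\ge c^*$ lies outside the overloaded regime. So for $\epsilon>1/2$ the smaller root, which is positive when $\lambda>\theta z_\epsilon^2$, is the consistent choice; the paper itself adopts this in the statement of Theorem~\ref{thm:joint-normal_server_staff-delay}(b).
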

\begin{proof}
	The proof can be found in Appendix \ref{app:deterministic-staffing-proof}.
\end{proof}

\subsection{Normal approximation for the number of servers}
\label{sec:delay-normal}

The deterministic approximation of Section~\ref{sec:delay-determ} replaces the
number of available servers by its fluid mean $s^*$, thereby ignoring fluctuations
in $S$. Because delay occurs when $Q\geq S$, the relevant variability is that of the
gap $D:=Q-S$, not of $Q$ alone. We therefore use the regime-specific bivariate-normal
approximation
\begin{equation}
\begin{pmatrix}Q\\S\end{pmatrix}
\dot{\sim}
\mathcal N\left(
\begin{pmatrix}q^*\\s^*\end{pmatrix},
\begin{pmatrix}
v_{qq} & v_{qs}\\
v_{qs} & v_{ss}
\end{pmatrix}
\right),
\label{eq:bvn}
\end{equation}
where the means and covariances are given by
Theorems~\ref{thm:steady_state_limits},
\ref{thm:diffusion_UL}, and~\ref{thm:diffusion_OL}. It follows directly that
\begin{equation}
\Pr(Q\geq S)
\approx
\overline{\Phi}\left(
\frac{s^*-q^*}
{\sqrt{v_{qq}+v_{ss}-2v_{qs}}}
\right).
\label{eq:joint-normal-delay}
\end{equation}

Let
\[
c^*=\frac{\lambda}{\mu}+\frac{\lambda p}{\gamma},
\qquad
z_\epsilon:=\overline{\Phi}^{-1}(\epsilon).
\]
Solving \eqref{eq:joint-normal-delay} gives the following staffing rules.

\begin{theorem}[Joint-normal staffing approximation]
\label{thm:joint-normal_server_staff-delay}
Fix $\epsilon\in(0,1)$ and use the regime-specific diffusion covariances.

\medskip
\noindent\textbf{(a) Underloaded regime ($\epsilon<1/2$).}
Since
\[
s^*-q^*=c-c^*,
\qquad
\Var(Q-S)=\frac{\lambda}{\mu}+\frac{\lambda p}{\gamma}=c^*,
\]
the required staffing level is
\begin{equation}
c_\epsilon
=
c^*+z_\epsilon\sqrt{c^*}
=
\frac{\lambda}{\mu}+\frac{\lambda p}{\gamma}
+
z_\epsilon
\sqrt{\frac{\lambda}{\mu}+\frac{\lambda p}{\gamma}}.
\label{eq:c-under-cov-final}
\end{equation}
Because $z_\epsilon>0$, this solution satisfies $c_\epsilon>c^*$.

\medskip
\noindent\textbf{(b) Overloaded regime ($\epsilon>1/2$).}
Let
\[
\kappa:=\frac{\gamma}{\gamma+p\mu},
\qquad
U_A:=
\frac{\gamma p\mu^2\bigl[\mu(1-p)-\gamma\bigr]}
{\theta(\gamma+p\mu)^2(\theta+\gamma+p\mu)}.
\]
Then
\[
s^*-q^*=\frac{\mu\kappa c-\lambda}{\theta},
\qquad
\Var(Q-S)=\frac{\lambda}{\theta}+U_Ac.
\]
If $\lambda>\theta z_\epsilon^2$, the feasible staffing level is the smaller
root of
\begin{equation}
\mu^2\kappa^2c^2
-
\left(2\lambda\mu\kappa+z_\epsilon^2\theta^2U_A\right)c
+
\lambda\left(\lambda-\theta z_\epsilon^2\right)
=0.
\label{eq:quadOL}
\end{equation}
Equivalently,
\begin{equation}
c_\epsilon
=
\frac{
A_\epsilon-
\sqrt{
A_\epsilon^2
-
4\mu^2\kappa^2\lambda
\left(\lambda-\theta z_\epsilon^2\right)}
}
{2\mu^2\kappa^2},
\qquad
A_\epsilon:=
2\lambda\mu\kappa+z_\epsilon^2\theta^2U_A,
\label{eq:c-over-cov-final}
\end{equation}
and $0<c_\epsilon<c^*$.
\end{theorem}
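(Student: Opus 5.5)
The proof is a direct computation from \eqref{eq:joint-normal-delay}. Setting the right-hand side equal to $\epsilon$ and inverting $\overline{\Phi}$ gives
\[
s^*(c)-q^*(c)=z_\epsilon\sqrt{\Var(Q-S)(c)},
\]
with $\Var(Q-S)=v_{qq}+v_{ss}-2v_{qs}$ taken from the regime-specific diffusion covariances. In each regime I would do two things. First, express the gap $s^*-q^*$ and the variance $\Var(Q-S)$ as explicit functions of $c$, using Theorem~\ref{thm:steady_state_limits} together with Theorems~\ref{thm:diffusion_UL} and~\ref{thm:diffusion_OL}. Second, solve the resulting scalar equation for $c$, and check that the root lies in the regime it was derived from. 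The sign of $z_\epsilon$ decides this. If $\epsilon<1/2$ then $z_\epsilon>0$, so $s^*>q^*$, which is the underloaded side. If $\epsilon>1/2$ then $z_\epsilon<0$, which is the overloaded side.

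\textbf{Underloaded case.} Here $s^*-q^*=c-\lambda p/\gamma-\lambda/\mu=c-c^*$. Theorem~\ref{thm:diffusion_UL} gives $\Var(Q)+\Var(S)-2\Cov(Q,S)=\lambda/\mu+\lambda p/\gamma=c^*$, and this does not depend on $c$. The equation is therefore linear, with solution $c_\epsilon=c^*+z_\epsilon\sqrt{c^*}$. Since $z_\epsilon>0$, this gives $c_\epsilon>c^*$, consistent with the regime condition of Theorem~\ref{thm:steady_state_limits}.

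\textbf{Overloaded case.} From Theorem~\ref{thm:steady_state_limits},
\[
q^*-s^*=\frac{\lambda-\mu\kappa c}{\theta},
\]
so $s^*-q^*=(\mu\kappa c-\lambda)/\theta$. Write $V_S=c\gamma p\mu/(\gamma+p\mu)^2$ and $R=(\gamma+\theta+p\mu-\mu)/(\theta+\gamma+p\mu)$. Theorem~\ref{thm:diffusion_OL} then gives $v_{qq}=\lambda/\theta+\tfrac{\theta-\mu}{\theta}V_SR$, $v_{ss}=V_S$ and $v_{qs}=V_SR$. Hence
\[
\Var(Q-S)=\frac{\lambda}{\theta}+V_S\Bigl[1+R\Bigl(\frac{\theta-\mu}{\theta}-2\Bigr)\Bigr]
=\frac{\lambda}{\theta}+V_S\,\frac{\theta-R(\theta+\mu)}{\theta}.
\]
The step needing the most care is the algebraic simplification $V_S[\theta-R(\theta+\mu)]/\theta=U_Ac$. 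To verify it, put $\theta-R(\theta+\mu)$ over the common denominator $\theta+\gamma+p\mu$. The numerator is
\[
\theta(\theta+\gamma+p\mu)-(\theta+\mu)(\gamma+\theta+p\mu-\mu),
\]
which should reduce to $\mu[\mu(1-p)-\gamma]$ after cancellation. I will check this expansion term by term. Multiplying by $V_S/\theta$ then reproduces $U_A c$ exactly.

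With the variance in hand, the equation reads $(\mu\kappa c-\lambda)/\theta=z_\epsilon\sqrt{\lambda/\theta+U_Ac}$. Squaring and multiplying by $\theta^2$ gives
\[
(\mu\kappa c-\lambda)^2=z_\epsilon^2(\theta\lambda+\theta^2U_Ac),
\]
which is exactly the quadratic \eqref{eq:quadOL}. Squaring can introduce a spurious root, so I must keep only the root with $\mu\kappa c-\lambda$ having the sign of $z_\epsilon$, that is $\mu\kappa c<\lambda$. Note that $\mu\kappa c<\lambda$ is equivalent to $c<c^*$, because $\mu\kappa c^*=\lambda$.

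\textbf{Root selection.} This is the main obstacle. Call the quadratic $f(c)$. At $c=\lambda/(\mu\kappa)=c^*$ the squared term vanishes, so $f(c^*)=-z_\epsilon^2\theta(\lambda+\theta U_Ac^*)\le 0$, because $\lambda+\theta U_Ac^*=\theta\Var(Q-S)>0$. Thus $c^*$ lies between the two roots. It follows that the smaller root is the one with $c<c^*$, and it is the valid solution. Positivity of the smaller root comes from the product of the roots, $\lambda(\lambda-\theta z_\epsilon^2)/(\mu^2\kappa^2)$, together with the sum of the roots. The hypothesis $\lambda>\theta z_\epsilon^2$ makes the product positive. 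The sum is $A_\epsilon/(\mu^2\kappa^2)$, and since the larger root exceeds $c^*>0$ the sum is positive, so both roots are positive. This gives $0<c_\epsilon<c^*$. Applying the quadratic formula with the minus sign yields \eqref{eq:c-over-cov-final}. The discriminant is nonnegative because $f(c^*)\le0$ guarantees real roots.
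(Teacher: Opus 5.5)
Your proposal is correct and follows the same route as the paper: invert the joint-normal delay formula, substitute the regime-specific fluid means and diffusion covariances, and in the overloaded case square and solve the resulting quadratic. Your explicit check that $\Var(Q-S)=\lambda/\theta+U_Ac$ and your root selection via $f(c^*)<0$ are more careful than the appendix proof, which uses a different coefficient $U$ (not equal to $U_A$) and says to take the larger root, contrary to the statement.
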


\begin{proof}
See Appendix~\ref{app:joint-normal-correlated-proof}.
\end{proof}

The underloaded rule has the familiar square-root form, but charging affects
both the fluid requirement and the safety margin. In contrast, the
deterministic-server rule uses the smaller safety term
$z_\epsilon\sqrt{\lambda/\mu}$. The ratio of its safety margin to that in
\eqref{eq:c-under-cov-final} is
\begin{equation}
\sqrt{\frac{\lambda/\mu}
{\lambda/\mu+\lambda p/\gamma}}
=
\frac{1}{\sqrt{1+p\mu/\gamma}}.
\label{eq:detloss}
\end{equation}
Thus, ignoring server fluctuations can substantially understate the required
safety capacity. For example, when $p=0.5$, $\mu=1$, and $\gamma=0.1$, the
deterministic rule retains only $1/\sqrt{6}\approx0.41$ of the joint-normal
safety margin.

At $\epsilon=1/2$, both formulas return $c^*$. Nevertheless, the underlying
one-sided covariance approximations are discontinuous there and should not be
used close to the critical capacity. The boundary region and its Gaussian
moment-closure approximation are developed in
Section~\ref{sec:boundary-diffusion}.

\subsection{Boundary-aware staffing}
\label{sec:boundary-staffing-rule}

The staffing formulas of Section~\ref{sec:delay-normal} use one-sided diffusion
moments and may therefore be inaccurate near the critical capacity $c^*$.
This issue affects many practically relevant targets. Indeed, the underloaded
rule
\[
c_\epsilon=c^*+z_\epsilon\sqrt{c^*},
\qquad
z_\epsilon:=\overline{\Phi}^{-1}(\epsilon),
\]
lies within two standard deviations of $c^*$ whenever
\[
\overline{\Phi}(2)\leq\epsilon<\frac12.
\]
Thus, for delay targets between approximately $0.023$ and $0.5$, the staffing
decision falls precisely in the region where switching between the two drift
regimes cannot be ignored.

We therefore retain the normal delay approximation
\eqref{eq:joint-normal-delay}, but evaluate it using the Gaussian
moment-closure moments from
Section~\ref{sec:gauss-moment-near-boundary}. For $c\in\mathcal B_2$, define
\begin{equation}
\widehat z_{\mathrm B}(c)
:=
\frac{\widehat m_S(c)-\widehat m_Q(c)}
{\sqrt{
\widehat v_{qq}(c)+\widehat v_{ss}(c)-2\widehat v_{qs}(c)
}},
\label{eq:boundary-standardized-gap-staffing}
\end{equation}
where
\[
\widehat m_Q(c),\quad
\widehat m_S(c),\quad
\widehat v_{qq}(c),\quad
\widehat v_{ss}(c),\quad
\widehat v_{qs}(c)
\]
denote the stationary Gaussian-closure moments.

Let
\[
c_-:=\inf\mathcal B_2,
\qquad
c_+:=\sup\mathcal B_2,
\]
and define the boundary-aware standardized gap
\begin{equation}
z_{\mathrm{BA}}(c)
:=
\begin{cases}
z_{\mathrm{OL}}(c), & c<c_-,\\[1mm]
\widehat z_{\mathrm B}(c), & c_-\leq c\leq c_+,\\[1mm]
z_{\mathrm{UL}}(c), & c>c_+,
\end{cases}
\label{eq:boundary-aware-standardized-gap}
\end{equation}
where
\[
z_r(c):=
\frac{s^*(c)-q^*(c)}
{\sqrt{v_D^r(c)}},
\qquad
r\in\{\mathrm{OL},\mathrm{UL}\}.
\]
Assuming that $z_{\mathrm{BA}}(c)$ is nondecreasing and that the closure
matches continuously with the one-sided approximations at $c_-$ and $c_+$,
the staffing prescription is
\begin{equation}
\widehat c_\epsilon
=
\left\lceil
\inf\left\{
c\geq0:
z_{\mathrm{BA}}(c)\geq z_\epsilon
\right\}
\right\rceil.
\label{eq:complete-boundary-aware-staffing}
\end{equation}
Equivalently, $\widehat c_\epsilon$ is the smallest integer capacity satisfying
\[
\overline{\Phi}\bigl(z_{\mathrm{BA}}(c)\bigr)\leq\epsilon.
\]

Although $\mathcal B_2$ is defined by the symmetric standardized condition
$|z(c)|\leq2$, it is generally asymmetric in capacity. On the underloaded
side,
\[
c_+-c^*=2\sqrt{c^*},
\]
whereas the overloaded endpoint satisfies
\begin{equation}
\frac{\mu\kappa}{\theta}
\frac{c_--c^*}
{\sqrt{v_D^{\mathrm{OL}}(c_-)}}
=-2.
\label{eq:lower-boundary-endpoint}
\end{equation}
Hence $c^*-c_-$ need not equal $c_+-c^*$. This asymmetry arises from both
the different slopes of the fluid gap and the regime-dependent variance of
$S-Q$.

\section{Abandonment targets}
\label{sec:abandon}

Let $N_{\mathrm{Ab}}(t)$ and $N_A(t)$ denote the cumulative numbers of
abandonments and arrivals. Under stationarity and rate balance, the long-run
abandonment fraction is
\begin{equation}
\alpha
:=
\lim_{t\to\infty}\frac{N_{\mathrm{Ab}}(t)}{N_A(t)}
=
\frac{\theta}{\lambda}\E[(Q-S)^+].
\label{eq:abandonment-fraction}
\end{equation}

To approximate this quantity, let $D:=Q-S$ and use the Gaussian closure from
Section~\ref{sec:gauss-moment-near-boundary}. Define
\[
m(c):=\widehat m_Q(c)-\widehat m_S(c),
\qquad
\sigma_D^2(c)
:=
\widehat v_{qq}(c)+\widehat v_{ss}(c)-2\widehat v_{qs}(c),
\qquad
a(c):=\frac{m(c)}{\sigma_D(c)}.
\]
Since $D$ is approximated by
$\mathcal N(m(c),\sigma_D^2(c))$, the standard normal loss formula gives
\begin{equation}
\widehat\alpha(c)
=
\frac{\theta}{\lambda}
\left[
\sigma_D(c)\varphi\bigl(a(c)\bigr)
+
m(c)\Phi\bigl(a(c)\bigr)
\right].
\label{eq:alpha-of-c}
\end{equation}
The Gaussian closure can be used throughout the parameter range and avoids
switching formulas near $c^*$. Away from the boundary, its stationary moments
approach the corresponding one-sided diffusion approximations.

For a target $\varepsilon\in(0,1)$, the abandonment staffing rule is
\begin{equation}
c_\varepsilon
:=
\min\left\{
c\in\mathbb Z_+:
\widehat\alpha(c)\leq\varepsilon
\right\}.
\label{eq:aband-rule}
\end{equation}
When $\widehat\alpha(c)$ is nonincreasing, as observed in our numerical
experiments, \eqref{eq:aband-rule} can be computed by bisection. Otherwise, an
integer search should be used to select the smallest capacity satisfying the
target.

For interpretation, a closed form is available under the underloaded
diffusion approximation. Let
\[
c^*:=\frac{\lambda}{\mu}+\frac{\lambda p}{\gamma},
\qquad
u:=\frac{c-c^*}{\sqrt{c^*}},
\qquad
h(u):=\varphi(u)-u\overline{\Phi}(u).
\]
Using
\[
m(c)=-(c-c^*),
\qquad
\sigma_D^2(c)=c^*,
\]
in \eqref{eq:alpha-of-c} yields
\begin{equation}
\widehat\alpha_{\mathrm{UL}}(c)
=
\frac{\theta\sqrt{c^*}}{\lambda}\,h(u).
\label{eq:alpha-underloaded}
\end{equation}
Consequently,
\begin{equation}
c_\varepsilon^{\mathrm{UL}}
=
\left\lceil
c^*
+
\sqrt{c^*}\,
h^{-1}\left(
\frac{\lambda\varepsilon}{\theta\sqrt{c^*}}
\right)
\right\rceil.
\label{eq:aband-underloaded-rule}
\end{equation}
This expression is appropriate when the resulting standardized margin is
positive and sufficiently far from the boundary. Near $c^*$, the staffing
level should instead be computed from the full closure through
\eqref{eq:aband-rule}.

Additional properties of $(Q-S)^+$ and $(S-Q)^+$ are provided in
Appendix~\ref{app:positive-part}.


\section{Numerical Experiments}
\label{sec:numerics}

This section validates the fluid and diffusion approximations developed in Sections~\ref{sec:fluid}--\ref{sec:diffusion} by means of detailed event--driven simulations.  Our goals are threefold: (i) to examine the accuracy of the fluid limits for $(Q(t),S(t))$ across a wide range of parameters; (ii) to assess the quality of the second--order (diffusion) moments $v_{qq},v_{ss},v_{qs}$; and (iii) to evaluate the operational staffing rules for delay and abandonment targets presented in Sections~\ref{sec:staffing} and~\ref{sec:abandon}.

\begin{figure}[ht]
    \centering
    \includegraphics[width=0.75\linewidth]{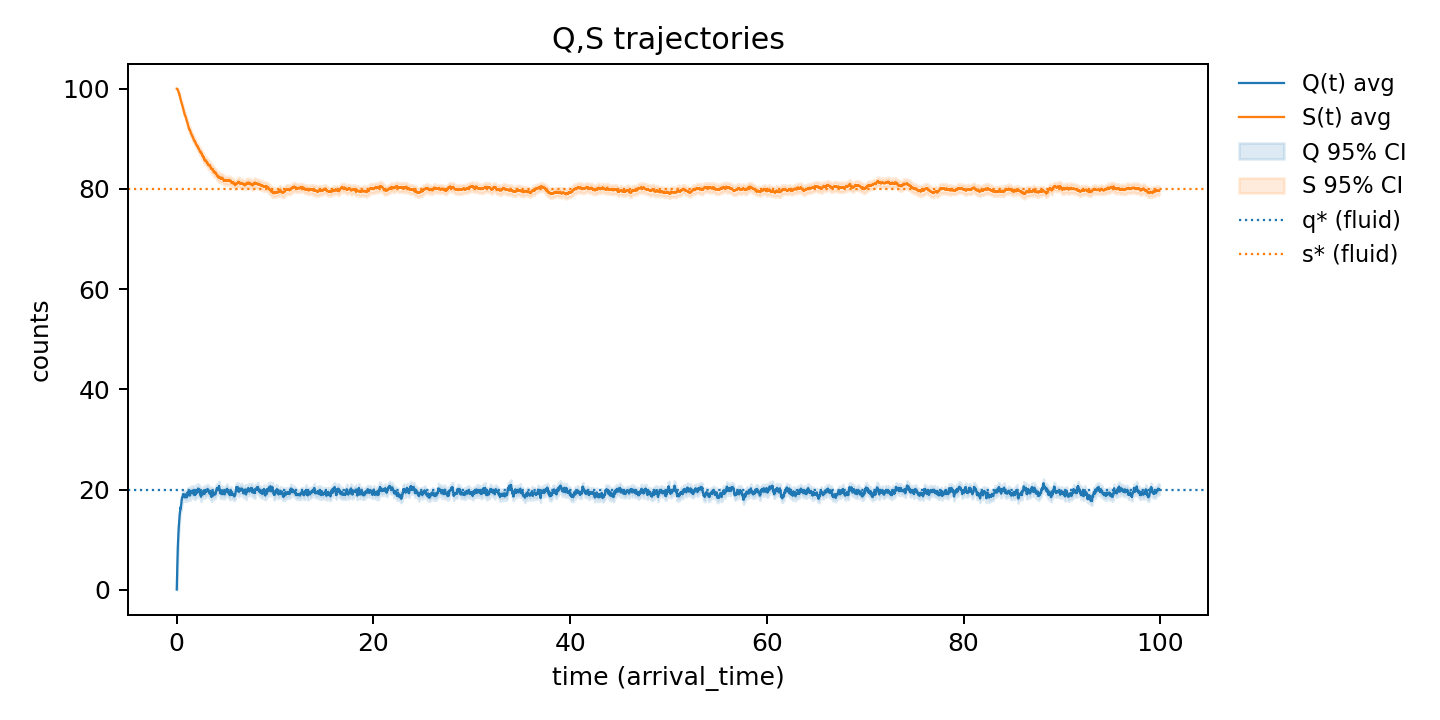}
    \caption{$Q(t)$ and $S(t)$, $\lambda = 100,\, \mu = 5 , \, \theta = 1,\, p=0.1,\, \gamma = 0.5$ and $c=100$ (UL)}
    \label{fig:QS-trajectories_UL}
\end{figure}

\begin{figure}[t]
    \centering
    \includegraphics[width=0.75\linewidth]{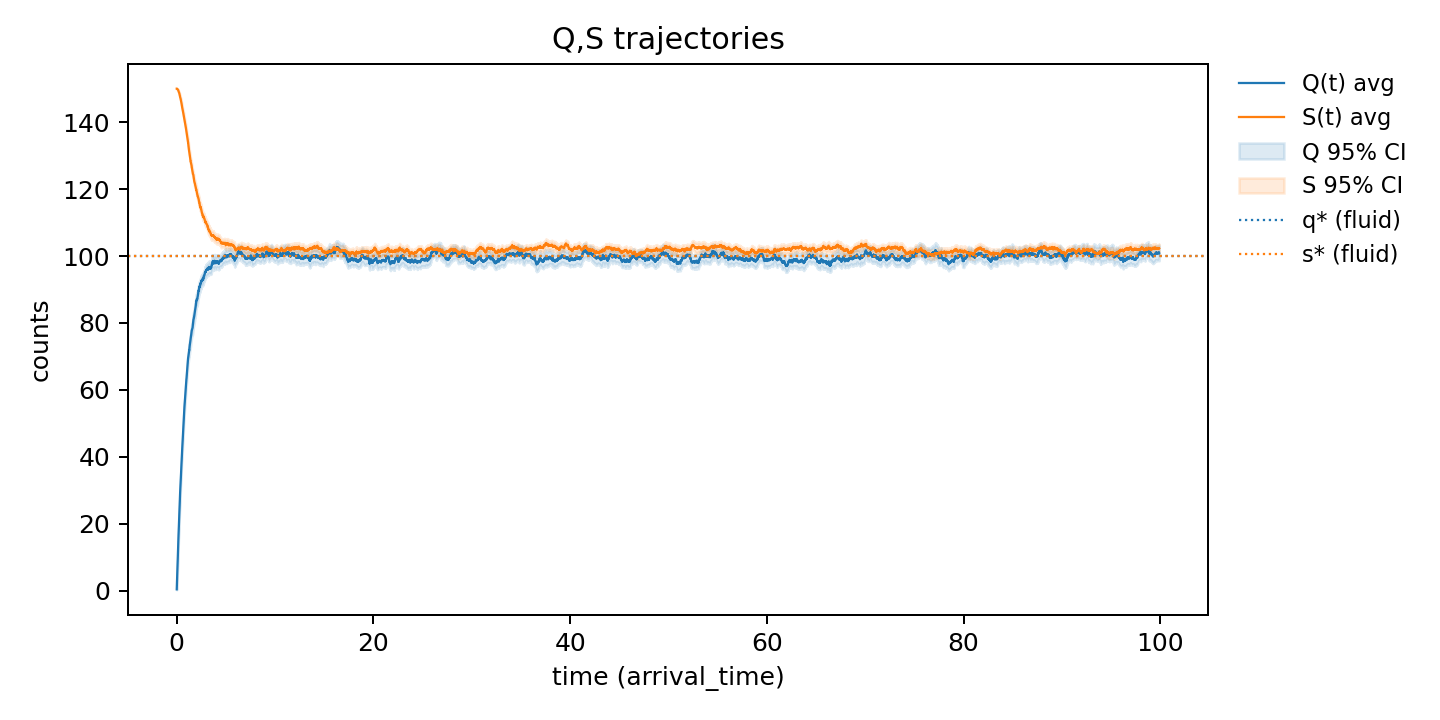}
    \caption{$Q(t)$ and $S(t)$, $\lambda = 100,\, \mu = 1 , \, \theta = 1,\, p=0.5,\, \gamma = 1$ and $c=100$ (OL)}
    \label{fig:QS-trajectories_OL}
\end{figure}

\begin{figure}[t]
    \centering
    \includegraphics[width=0.75\linewidth]{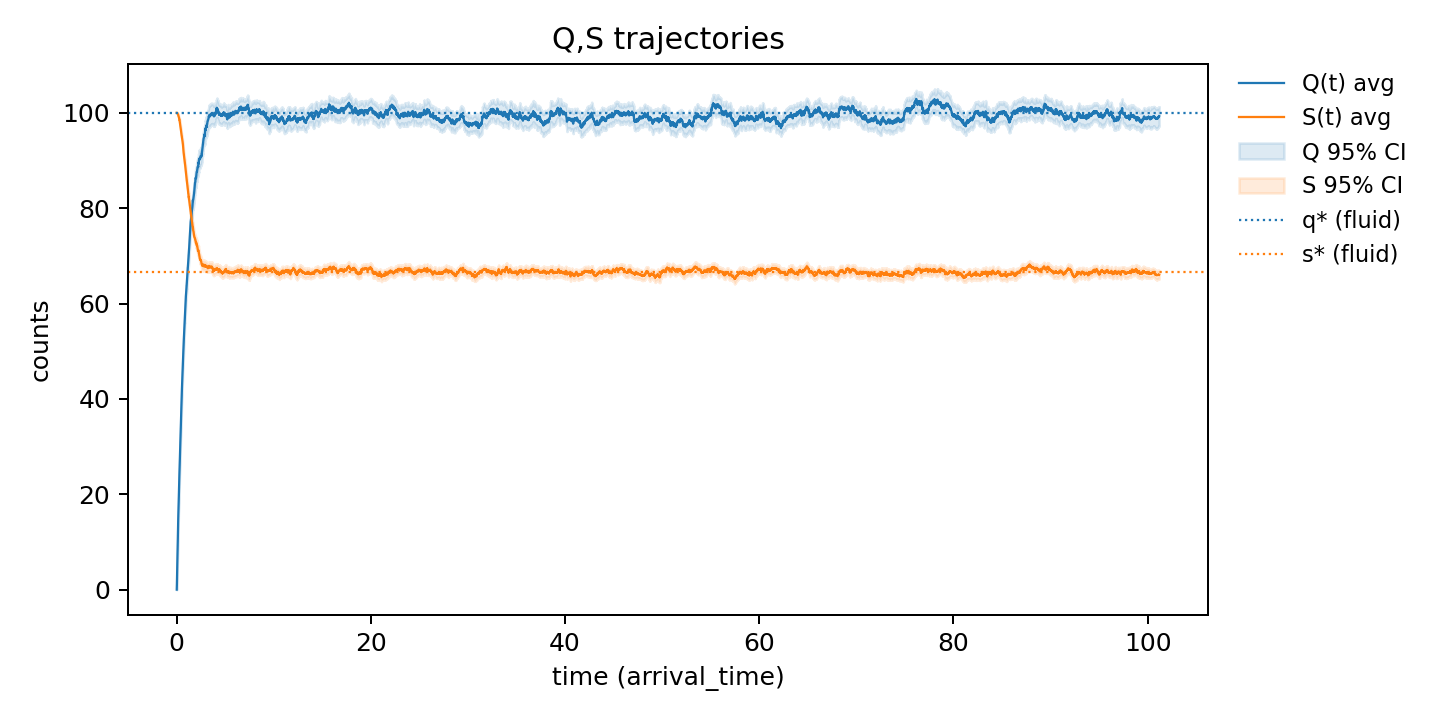}
    \caption{$Q(t)$ and $S(t)$, $\lambda = 100,\, \mu = 1 , \, \theta = 1,\, p=0.5,\, \gamma = 1,\, c=150$ (boundary)}
    \label{fig:QS-trajectories_boundary}
\end{figure}






\begin{figure}[t]
    \centering
    \includegraphics[width=0.85\linewidth]{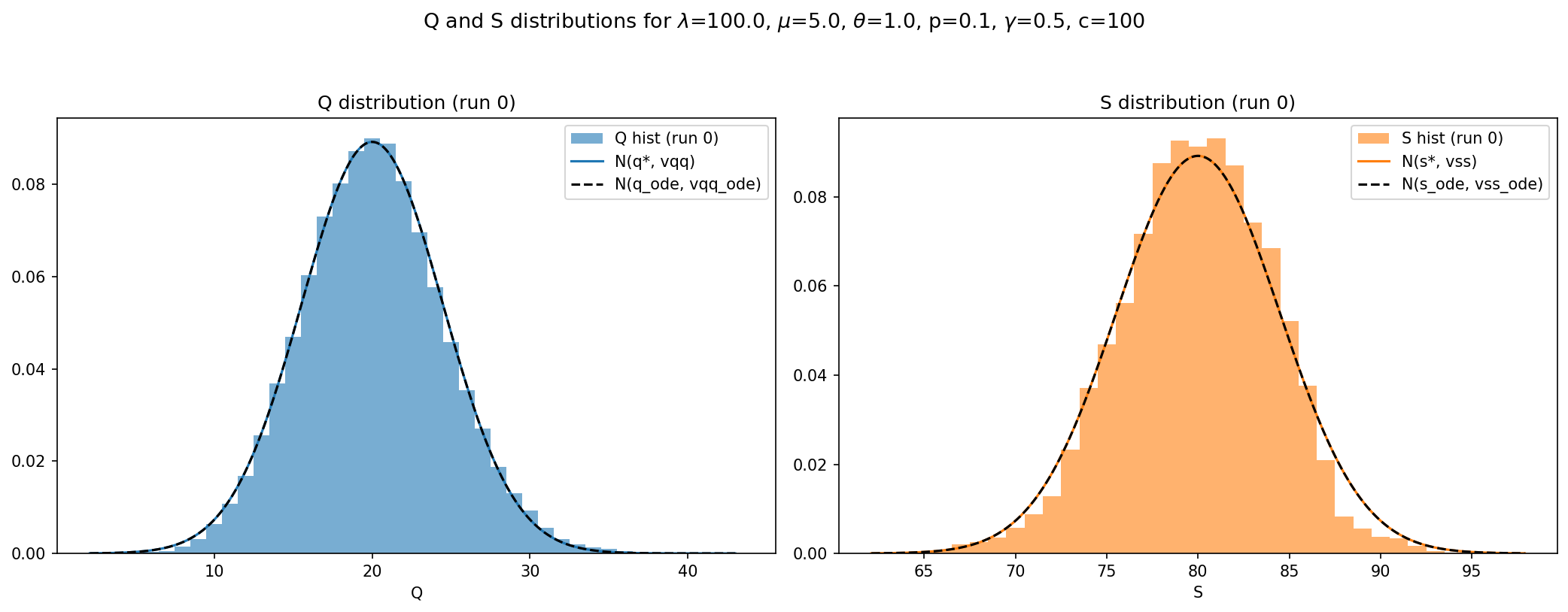}
    \caption{Distribution of $Q$ and $S$ for UL parameters of Fig.~\ref{fig:QS-trajectories_UL}}
    \label{fig:dist_q_s_UL}
\end{figure}

\begin{figure}[t]
    \centering
    \includegraphics[width=0.85\linewidth]{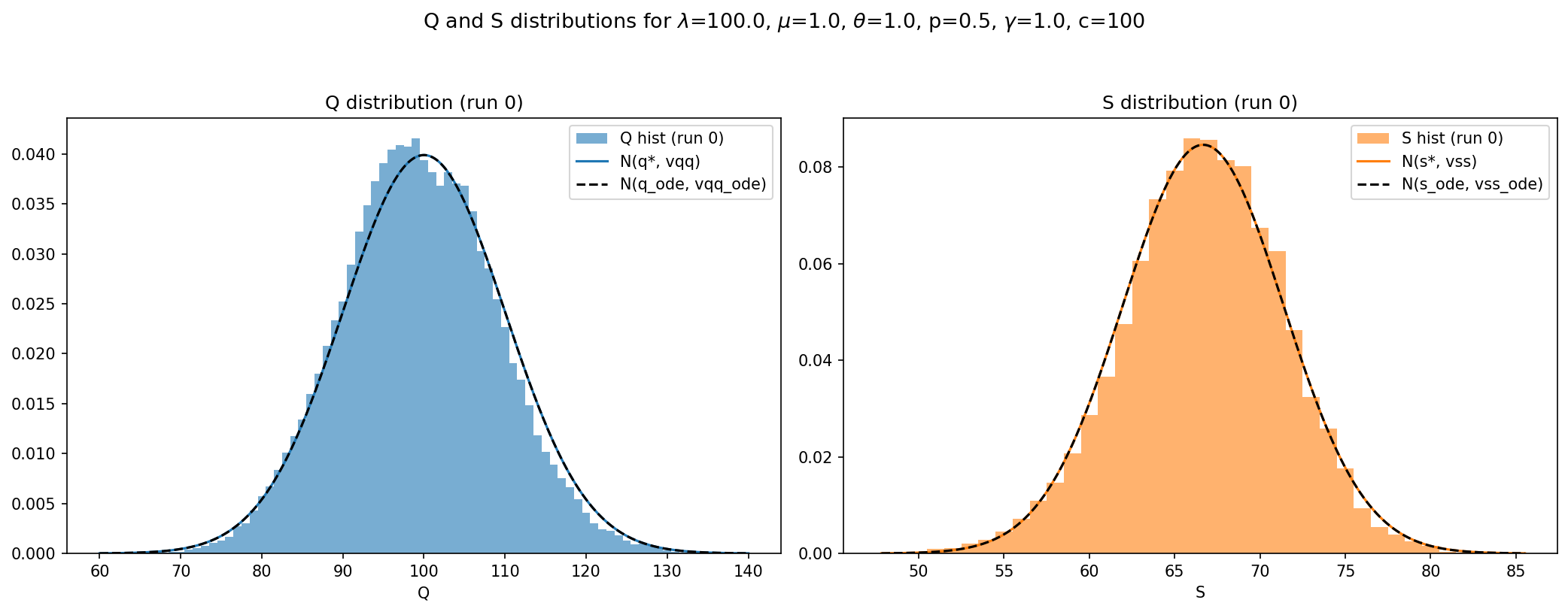}
    \caption{Distribution of $Q$ and $S$ for OL parameters of Fig.~\ref{fig:QS-trajectories_OL}}
    \label{fig:dist_q_s_OL}
\end{figure}

\begin{figure}[t]
    \centering
    \includegraphics[width=0.85\linewidth]{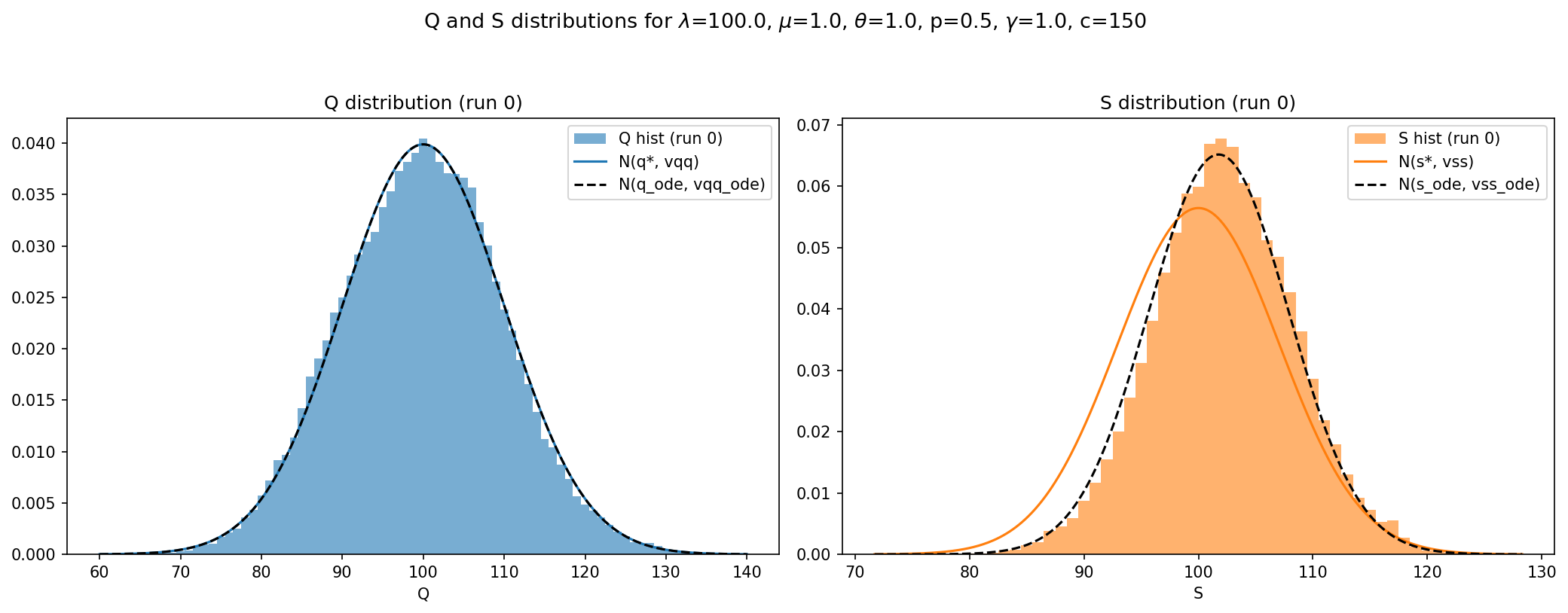}
    \caption{Distribution of $Q$ and $S$ for boundary parameters of Fig.~\ref{fig:QS-trajectories_boundary}}
    \label{fig:dist_q_s_boundary}
\end{figure}





\subsection{Simulation methodology}

We implement an event--driven simulation that tracks the system state $(Q(t),S(t))$ through arrival events, service completions, abandonment events, and transitions between active and charging server states.  The simulator records a per--arrival snapshot of $(Q,S)$, time--averaged performance statistics, and cumulative abandonment and delay counts. Since the model is naturally event driven rather than time driven, we post process the event sequence into a regularly sampled time series to enable rolling moment estimates and trajectory plots.

To ensure robustness, we conduct a systematic parameter sweep covering over 5995 configurations (with $100{,}000$ customers each); covering a broad range of arrival rates, service rates, abandonment rates, return rates, vacation (charging) probabilities, and staffing level that were selected from the following ranges:
\[
\lambda \in \{80,\,100,\,120\}, \qquad
\mu \in \{0.5,\,1,\,10\}, \qquad
\theta \in \{0.5,\,1,\,2\},
\]
\[
p \in \{0.1,\,0.5,\,0.8\}, \qquad
\gamma \in \{0.1,\,0.5,\,1,\,10\}, \qquad c\in [1,1000].
\]
The staffing levels used in this sweep were chosen so we could show the transition from UL to OL. For each configuration, we ran a single long simulation to estimate steady-state performance measures (delay probability, abandonment rate, average queue, etc.).

Because each run contained many customers, Monte Carlo noise for these coarse metrics is relatively small compared to the variation across parameter settings. We therefore use this sweep primarily to study qualitative trends and to compare fluid/diffusion approximations across a broad range of parameters, rather than to make precise point estimates with confidence intervals. For visualization, we focus on three representative configurations corresponding to UL, OL, and near-critical regimes.

For visualization and interpretation, we also include a collection of representative runs that illustrate the qualitative behavior of the dynamics in both UL and OL regimes. For these, we simulate $100$ independent runs with $30{,}000$ customers per run. We then average trajectories and moments across runs and construct pointwise confidence bands (which turn out to be very narrow).

In short, for the small number of representative parameter sets, we ran 100 independent replications and constructed confidence intervals using the classical-i.i.d. approach. For the large parameter sweep (5995 configurations) we used single long runs with time averaging to estimate steady-state quantities; these are primarily used to study trends and to compare fluid/diffusion approximations, rather than to report high-precision point estimates for each configuration.
\begin{figure}[t]
    \centering
    \includegraphics[width=0.75\linewidth]{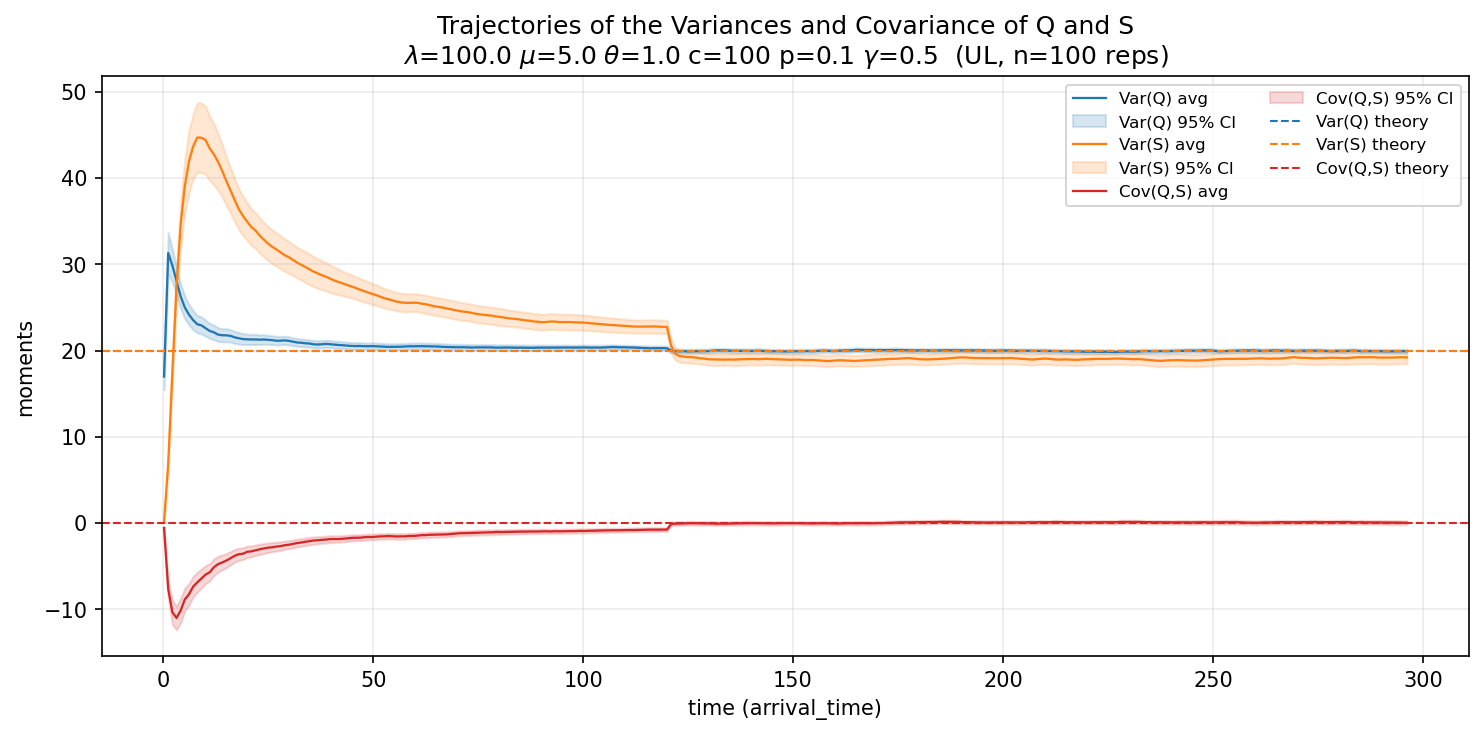}
    \caption{Variance and covariance with $\lambda = 100,\, \mu = 5 , \, \theta = 1,\, p=0.1,\, \gamma = 0.5$ and $c=100$ (UL)}
    \label{fig:var_cov_traj_UL}
\end{figure}

\begin{figure}[t]
    \centering
    \includegraphics[width=0.75\linewidth]{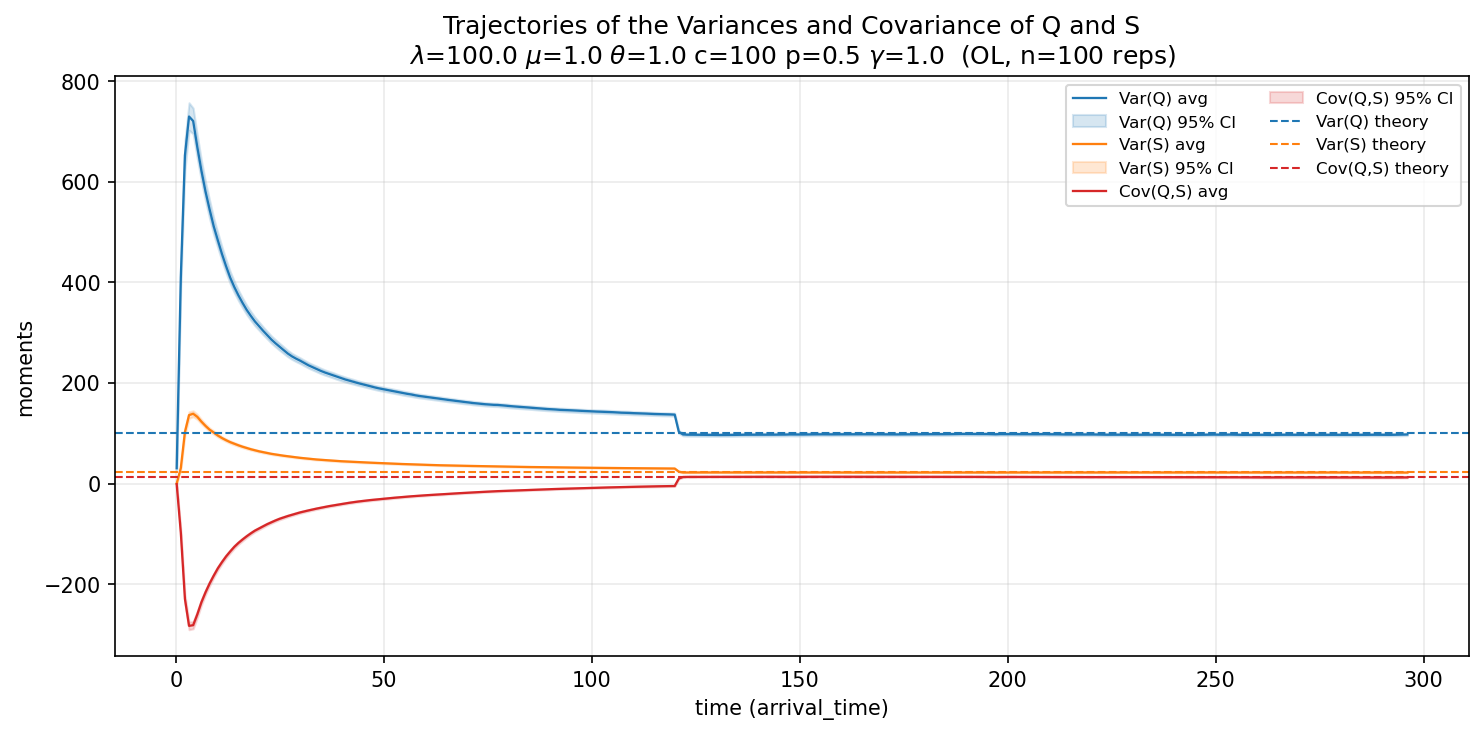}
    \caption{Variance and covariance with $\lambda = 100,\, \mu = 5 , \, \theta = 1,\, p=0.1,\, \gamma = 0.5$ and $c=100$ (UL)}
    \label{fig:var_cov_traj_OL}
\end{figure}

\begin{figure}[t]
    \centering
    \includegraphics[width=0.75\linewidth]{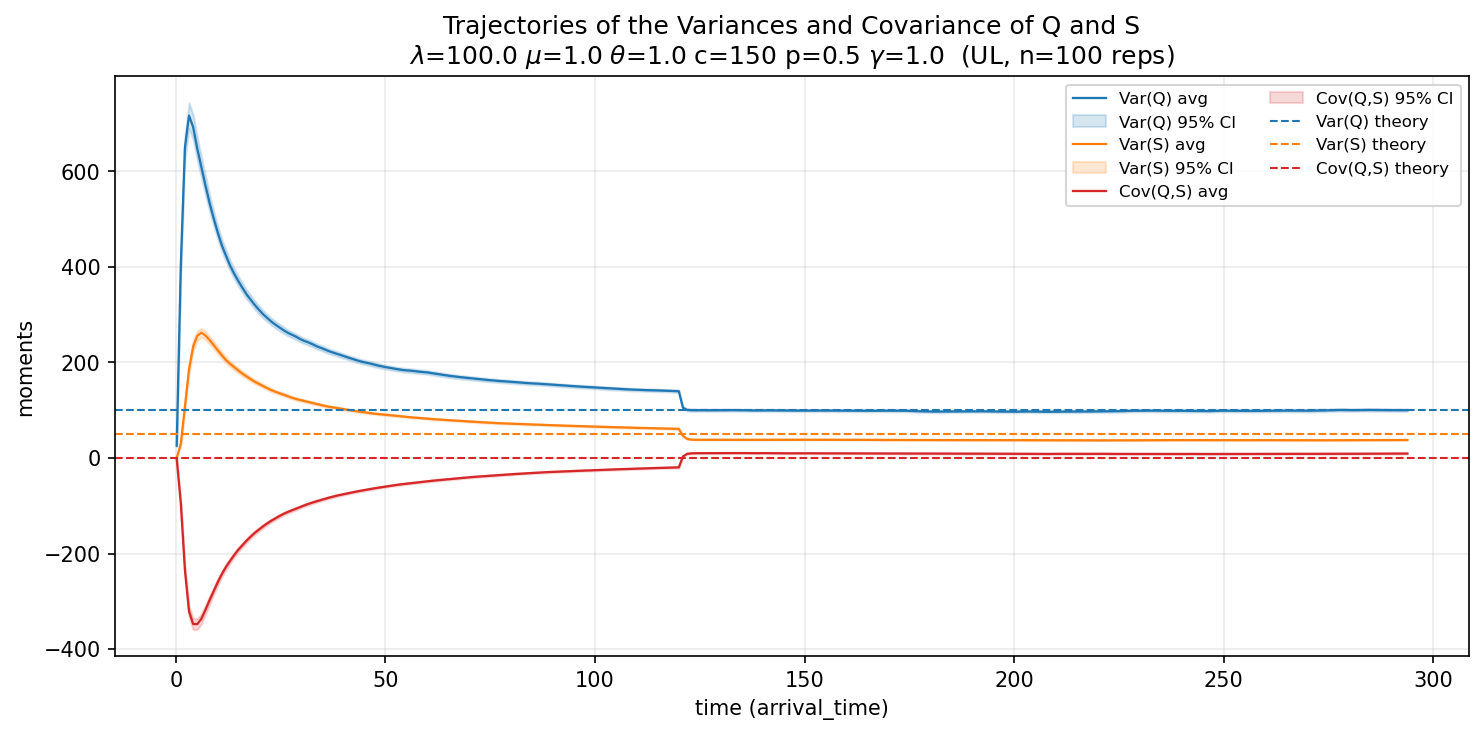}
    \caption{Variance and covariance with $\lambda = 100,\, \mu = 1 , \, \theta = 1,\, p=0.5,\, \gamma = 1,\, c=150$ (boundary UL)}
    \label{fig:var_cov_traj_B_UL}
\end{figure}






\subsection{Trajectory validation: fluid limits}

Figures~\ref{fig:QS-trajectories_UL}, \ref{fig:QS-trajectories_OL} and \ref{fig:QS-trajectories_boundary} show sample paths of $Q(t)$ and $S(t)$ overlayed with their corresponding fluid fixed points. We show 3 plots for the different parameters, Each of them an average of 100 simulation runs. We can also see a small area around the plot lines, which are the confidence intervals. For the upper left plot we simulated an Under-loaded regime with $\lambda = 100,\, \mu = 5 , \, \theta = 1,\, p=0.1,\, \gamma = 0.5$ and $c=100$. Following Theorem~\ref{thm:steady_state_limits}, the steady state limits are $q^* = 20$ and $s^*=80$. In each regime, the trajectories fluctuate around the fluid predictions, with deviations consistent with the diffusion scale. For the upper right plot we simulated an Over-loaded regime, with $\lambda = 100,\, \mu = 1 , \, \theta = 1,\, p=0.5,\, \gamma = 1$ and $c=100$, $Q(t)$ stabilizes around $100$ and $S(t)$ concentrates near $\kappa c = (1/1.5)\times 100 \approx 67$, confirming the accuracy of Theorem~\ref{thm:steady_state_limits}. And for the bottom plot we simulated another Under-loaded regime, but on the boundary (i.e. $\lambda/\mu +\lambda p/\gamma = c$) and we say is near-critical in the UL regime. The simulated parameters were $ \lambda = 100,\, \mu = 1 , \, \theta = 1,\, p=0.5,\, \gamma = 1$ and $c=150$. And notice that, the same way as in the other UL run, $Q(t)$ remains close to $\lambda/\mu$ (for this run 100) while $S(t)$ fluctuates around $c - \lambda p/\gamma$ (for this run: 100).

On the other hand, Figures~\ref{fig:dist_q_s_UL}, \ref{fig:dist_q_s_OL} and
\ref{fig:dist_q_s_boundary} compare the empirical distributions of $Q$ and $S$ from a
single simulation run with the regime-specific normal approximations
$\mathcal N(q^*,v_{qq})$ and $\mathcal N(s^*,v_{ss})$, and with the normal
approximations built from the Gaussian closure of
Section~\ref{sec:gauss-moment-near-boundary}.
 
In the underloaded and overloaded configurations the agreement is not merely close but
exact, and for a structural reason. When $q^*<s^*$ with a gap large relative to
$\sqrt{\sigma^2_D}$ the queue is never blocked, so $Q$ evolves as an $M/M/\infty$ system and is
Poisson$(\lambda/\mu)$; since $M/M/\infty$ is reversible its departure process is
Poisson$(\lambda)$ and independent of the current queue length, and thinning it by $p$
makes the charging population an independent $M/M/\infty$ system with input rate
$\lambda p$ and service rate $\gamma$. Hence $c-S$ is Poisson$(\lambda p/\gamma)$ and
\[
\Var(Q)=\frac{\lambda}{\mu},
\qquad
\Var(S)=\frac{\lambda p}{\gamma},
\qquad
\Cov(Q,S)=0
\]
hold exactly, not merely to diffusion order. Consistently, the Lyapunov equation of
Theorem~\ref{thm:diffusion_UL} returns $v_{qs}=0$ as an identity rather than an
approximation. Symmetrically, when $q^*>s^*$ all servers are busy, the available-server
process is autonomous with birth rate $\gamma(c-s)$ and death rate $p\mu s$, and detailed
balance gives $S\sim\mathrm{Binomial}(c,\kappa)$ with $\kappa=\gamma/(\gamma+p\mu)$;
its variance $c\kappa(1-\kappa)=c\gamma p\mu/(\gamma+p\mu)^2$ is precisely the expression
in Theorem~\ref{thm:diffusion_OL}. Exact computation of the stationary distribution
confirms both: for the parameters of Figure~\ref{fig:dist_q_s_UL} it gives
$\E[Q]=20.0$, $\E[S]=80.0$, $\Var(Q)=20.0$ and $\Var(S)=20.0$ against the
predicted $20$, $80$, $20$ and $20$; for those of Figure~\ref{fig:dist_q_s_OL} it gives
$\E[Q]=100.0$, $\E[S]=66.67$, $\Var(Q)=100.0$ and $\Var(S)=22.223$ against the
predicted $100$, $66.67$, $100$ and $22.222$. The two normal curves in each panel are
therefore indistinguishable, and both track the histogram.
 
The near-critical configuration of Figure~\ref{fig:dist_q_s_boundary}, at which
$c=c^*=150$ and the fluid equilibrium lies exactly on the switching surface, behaves
differently, and the difference is confined to the server process. The queue histogram is
still matched by $\mathcal N(q^*,v_{qq})=\mathcal N(100,100)$. The server histogram is
not: it is centred near $101.8$ rather than at the fluid value $s^*=100$, and is visibly
narrower than $\mathcal N(s^*,\lambda p/\gamma)=\mathcal N(100,50)$. Exact computation
gives $\E[S]=101.764$ and $\Var(S)=38.341$, so the one-sided approximation misplaces the
centre by $1.8$ servers and overstates the variance by $30\%$. Both discrepancies are
recovered by the Gaussian closure, which returns $101.767$ and $37.513$ and whose curve
lies on the histogram in the figure. This is the boundary phenomenon of
Sections~\ref{sec:boundary-diffusion} and~\ref{sec:gauss-moment-near-boundary} seen at
the level of the marginal distributions rather than of the moments: the mean shift is of
diffusion order and therefore invisible to the fluid limit, while the variance is pulled
below its underloaded value by the fraction of time the system spends congested.

\subsection{Diffusion validation: variances and covariance}\label{sec:numerical-validation-covariance}

To validate the diffusion closure, we compute rolling estimates of $\Var(Q(t))$, $\Var(S(t))$, and $\Cov(Q(t),S(t))$ from the post-processed time series and compare them with the theoretical values  $v_{qq},v_{ss},v_{qs}$ obtained from the linear--Gaussian approximation.

Figures~\ref{fig:var_cov_traj_UL}, \ref{fig:var_cov_traj_OL} and \ref{fig:var_cov_traj_B_UL} display the average of 100 simulation runs of empirical rolling moments together with the theoretical constants. The parameters used for the 3 runs are the same (and in the same order) as the ones used for computing the $Q(t)$ and $S(t)$ trajectories. This is, $\lambda = 100,\, \mu = 5 , \, \theta = 1,\, p=0.1,\, \gamma = 0.5$ and $c=100$ for the upper-left;  $\lambda = 100,\, \mu = 1 , \, \theta = 1,\, p=0.5,\, \gamma = 1$ and $c=100$ for the upper-right; and $ \lambda = 100,\, \mu = 1 , \, \theta = 1,\, p=0.5,\, \gamma = 1$ and $c=150$ for the bottom. Note that we also give confidence regions around the average, which are negligible.

As expected, $\Var(S)$, $\Var(Q)$ and $\Cov[Q,S]$ match closely with its diffusion limit. There are slight variations in the simulation outputs in comparison to the theory, but we can see the tendency of convergence of these results.

It is also important to note that in strongly under-loaded regimes shown in Figure \ref{fig:var_cov_traj_UL}, with very fast service and many idle servers, the diffusion approximation tends to overestimate the variance of the active–server process $S$. This reflects the fact that our UL closure neglects some stabilizing feedback between the queue and the server-availability process. Nevertheless, the approximation for the queue-length variance and for $\Cov[Q,S]$ remains accurate, and the resulting staffing rules are still very close to simulation. This matches well with the empirical distribution shown in Figure \ref{fig:dist_q_s_UL}.

The near-critical configuration of Figure~\ref{fig:var_cov_traj_B_UL} corresponds exactly to the boundary condition $q^* = s^* = \lambda/\mu = 100$, with $c = \lambda/\mu + \lambda p/\gamma = 150$ satisfying the exact-staffing condition of the last remark in Section~\ref{subsec:model-construction}. This is therefore the operating point analyzed in Section~\ref{sec:boundary-diffusion}. The simulated $\text{Cov}(Q,S)$ in Figure~\ref{fig:var_cov_traj_B_UL} remains close to zero, consistent with both the UL formula of Theorem~\ref{thm:diffusion_UL} (which predicts $\text{Cov}(Q,S) \approx 0$).

On the other hand, we can see in Figures~\ref{fig:variances_l10} and \ref{fig:variances_l100} how the covariance and variance evolve with respect to $c$ with low and high traffic. Moreover, in these figures we are able to observe how the two different approximations perform within the boundary band: the Gaussian closure and the Gaussian mixture. We can appreciate how for both variances they track accurately the simulations. However, when it comes to approximating the covariance its not. 

Additionally, these figures show how the fluid and diffusion approximation fail around the boundary, but also how we can go around and get an accurate representation of the process via a Gaussian approximation.

The numerical results shown in Figures~~\ref{fig:variances_l10} and \ref{fig:variances_l100} illustrate two important features. First, the normal moment closure transitions smoothly between the overloaded and underloaded limits and approaches the corresponding regime-specific formulas as the system moves away from the boundary. This supports the use of the standardized distance \(z(c)\) to identify the region in which boundary crossings are important. Second, the direct Gaussian mixture captures the overall transition in \(v_{qq}\), but it systematically overestimates \(v_{ss}\) and \(v_{qs}\) near the boundary. The discrepancy is particularly visible for the covariance, which approaches its underloaded value more rapidly under the full moment closure.

This behavior indicates that the boundary moments are not, in general, a simple convex combination of the two stationary Ornstein--Uhlenbeck covariances. The nonlinear expectations involving \(Q\wedge S\) and \((Q-S)^+\) affect the three second moments differently. We therefore use the full bivariate-normal moment closure inside \(\mathcal{B}_2\), while retaining the simpler regime-specific diffusion formulas outside this region.

The numerical comparisons presented here demonstrate consistency between the proposed closure and the one-sided diffusion approximations away from the boundary. They should not, by themselves, be interpreted as validation against the original stochastic system. Such validation requires comparison with steady-state simulation of the Erlang--\(S^*\) process.

\begin{figure}[t]
    \centering
    \includegraphics[width=0.5\linewidth]{Simulation_plots/var_q_l10_t05.png}~\includegraphics[width=0.5\linewidth]{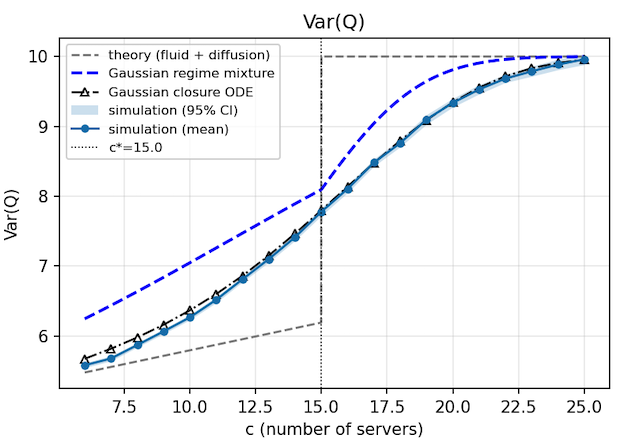}
    \includegraphics[width=0.5\linewidth]{Simulation_plots/var_s_l10_t05.png}~\includegraphics[width=0.5\linewidth]{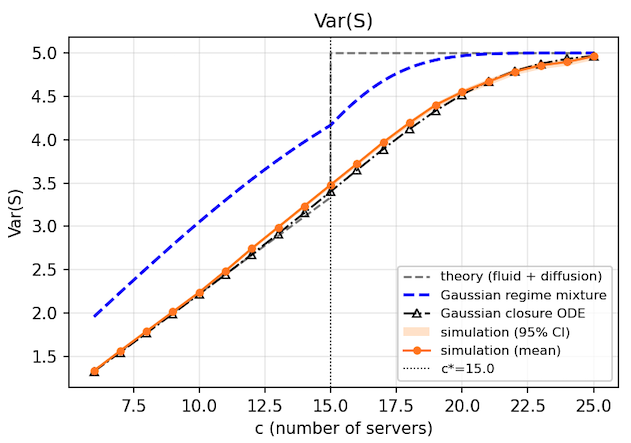}
    \includegraphics[width=0.5\linewidth]{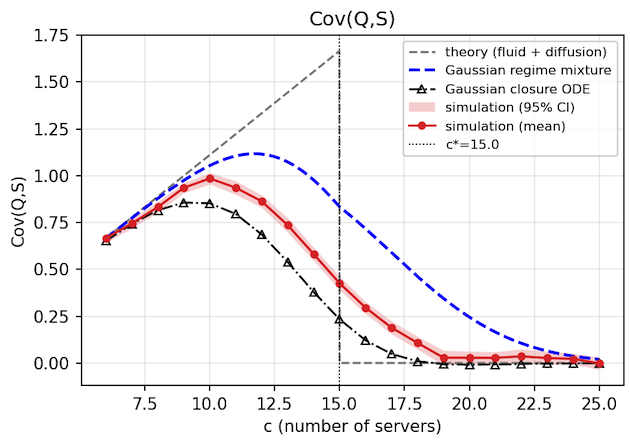}~\includegraphics[width=0.5\linewidth]{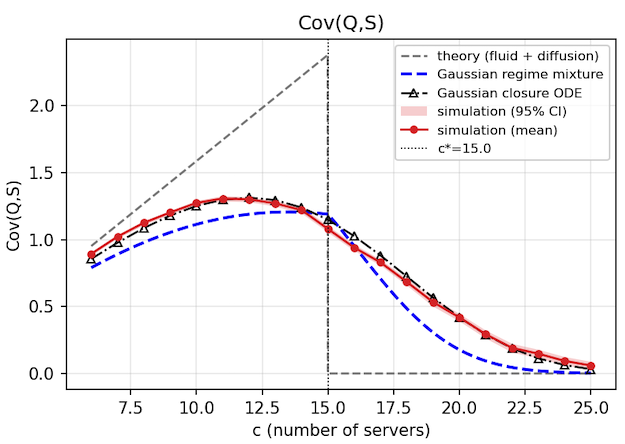}
    \caption{Comparison of Variances and Covariance of $Q$ and $S$ across boundary for $\lambda = 10, \, \mu = 1,\,\, p=0.5,\, \gamma = 1$ changing $\theta = 0.5$ (left) to $\theta = 2$ (right).}
    \label{fig:variances_l10}
\end{figure}

\begin{figure}[t]
    \centering
    \includegraphics[width=0.5\linewidth]{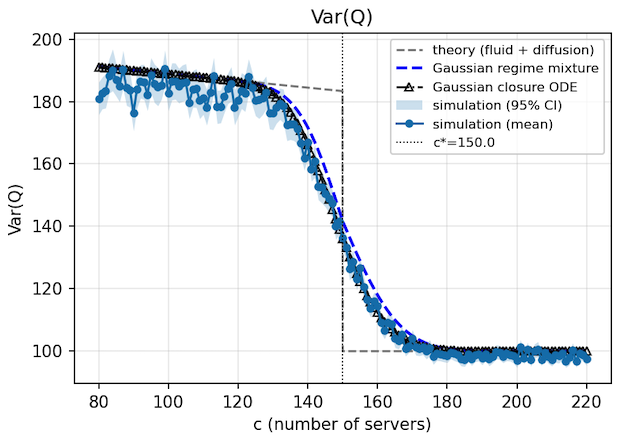}~\includegraphics[width=0.5\linewidth]{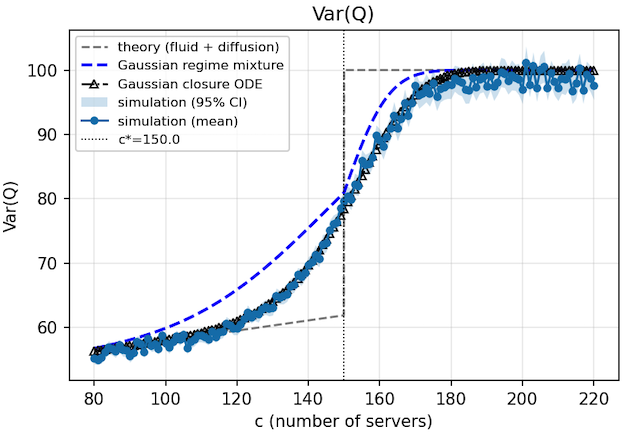}
    \includegraphics[width=0.5\linewidth]{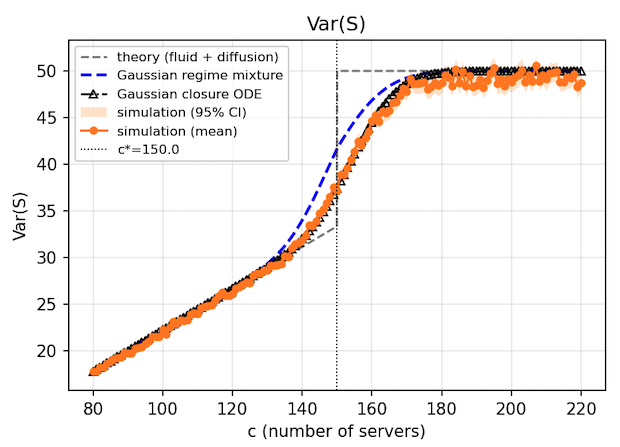}~\includegraphics[width=0.5\linewidth]{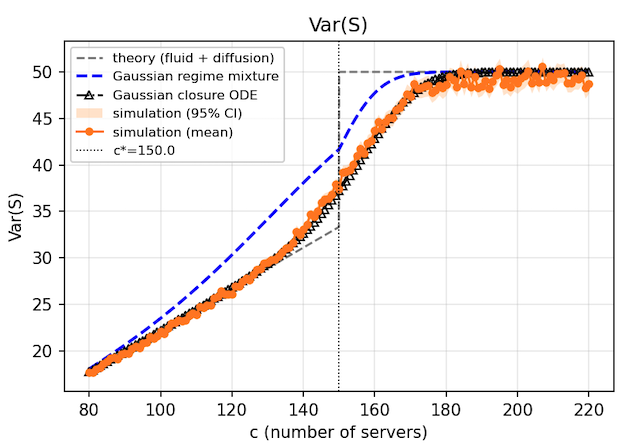}
    \includegraphics[width=0.5\linewidth]{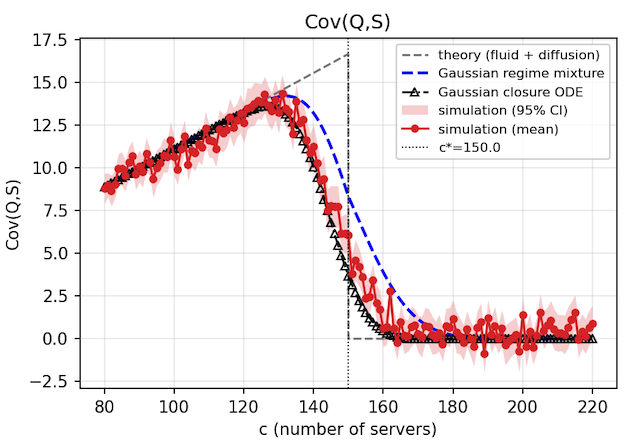}~\includegraphics[width=0.5\linewidth]{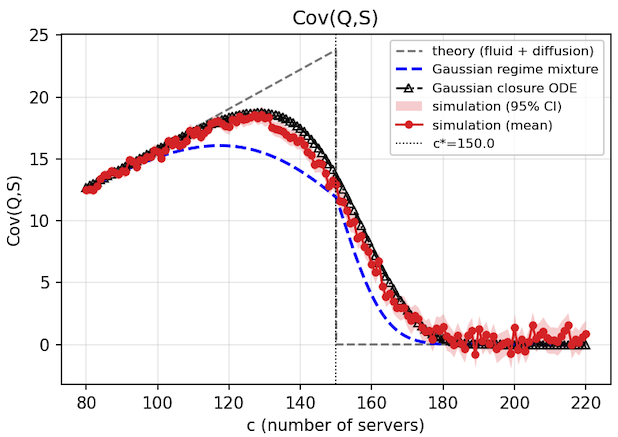}
    \caption{Comparison of Variances and Covariance of $Q$ and $S$ across boundary for $\lambda = 100, \, \mu = 1,\,\, p=0.5,\, \gamma = 1$ changing $\theta = 0.5$ (left) to $\theta = 2$ (right).}
    \label{fig:variances_l100}
\end{figure}

\subsection{Operational validation: staffing for delay and abandonment}

We next compare the staffing levels predicted by the fluid and diffusion rules with the minimum staffing empirically required to meet specified delay and abandonment targets.

\begin{table}[ht]
\centering
\begin{tabular}{ccccccccccc}
\toprule
$\lambda$ & $\mu$ & $\theta$ & $p$ & $\gamma$ &
$\varepsilon_{\text{delay}}$ &
$c_{\text{sim}}$ &
$c_{\text{fluid}}$ &
$c_{\text{fluid}}/c_{\text{sim}}$\% &
$c_{\text{diff}}$ &
$c_{\text{diff}}/c_{\text{sim}}$\% \\
\midrule
80 & 1 & 1 & 0.10 & 0.50 & 0.01 & 121 & 116.81 & 96.54 & 118.79 & 98.18 \\
80 & 1 & 1 & 0.50 & 0.10 & 0.05 & 512 & 494.71 & 96.62 & 516.04 & 100.79 \\
80 & 10 & 1 & 0.50 & 0.50 & 0.10 & 101 & 91.62 & 90.72 & 100.02 & 99.03 \\
\midrule
100 & 1 & 1 & 0.10 & 0.50 & 0.01 & 147 & 143.26 & 97.46 & 145.48 & 98.97 \\
100 & 1 & 1 & 0.50 & 0.10 & 0.05 & 636 & 616.45 & 96.93 & 640.29 & 100.67 \\
100 & 10 & 1 & 0.50 & 0.50 & 0.10 & 124 & 114.05 & 91.98 & 123.44 & 99.55 \\
\midrule
120 & 1 & 1 & 0.10 & 0.50 & 0.01 & 174 & 169.48 & 97.40 & 171.92 & 98.80 \\
120 & 1 & 1 & 0.50 & 0.10 & 0.05 & 759 & 738.02 & 97.24 & 764.14 & 100.68 \\
120 & 10 & 1 & 0.50 & 0.50 & 0.10 & 147 & 136.44 & 92.82 & 146.72 & 99.81 \\
\bottomrule
\end{tabular}
\caption{Comparison of simulated minimal staffing with fluid and diffusion predictions for delay targets.}
\label{tab:delay-staffing}
\end{table}

\begin{table}[ht]
\centering
\begin{tabular}{ccccccccccc}
\toprule
$\lambda$ & $\mu$ & $\theta$ & $p$ & $\gamma$ &
$\varepsilon_{\text{aband}}$ &
$c_{\text{sim}}$ &
$c_{\text{fluid}}$ &
$c_{\text{fluid}}/c_{\text{sim}}$\% &
$c_{\text{diff}}$ &
$c_{\text{diff}}/c_{\text{sim}}$\% \\
\midrule
80 & 1.00 & 1 & 0.50 & 10.00 & 0.01 & 93 & 83.16 & 89.42 & 92.80 & 99.78 \\
80 & 1.00 & 1 & 0.50 & 10.00 & 0.05 & 83 & 79.80 & 96.14 & 82.98 & 99.97 \\
80 & 1.00 & 1 & 0.50 & 10.00 & 0.10 & 77 & 75.60 & 98.18 & 76.71 & 99.62 \\
\midrule
100 & 0.50 & 1 & 0.50 & 0.50 & 0.01 & 314 & 297.00 & 94.59 & 315.30 & 100.41 \\
100 & 0.50 & 1 & 0.50 & 0.50 & 0.05 & 290 & 285.00 & 98.28 & 289.57 & 99.85 \\
100 & 0.50 & 1 & 0.50 & 0.50 & 0.10 & 272 & 270.00 & 99.26 & 271.19 & 99.70 \\
\midrule
120 & 1.00 & 1 & 0.50 & 0.10 & 0.01 & 739 & 712.80 & 96.45 & 740.28 & 100.17 \\
120 & 1.00 & 1 & 0.50 & 0.10 & 0.05 & 691 & 684.00 & 98.99 & 690.08 & 99.87 \\
120 & 1.00 & 1 & 0.50 & 0.10 & 0.10 & 648 & 648.00 & 100.00 & 649.54 & 100.24 \\
\bottomrule
\end{tabular}
\caption{Comparison of simulated minimal staffing with fluid and diffusion predictions for abandonment targets.}
\label{tab:abandon-staffing}
\end{table}

For each target level $\varepsilon$ and parameter configuration, we compute: (i) the minimal staffing $c^{\mathrm{sim}}_\varepsilon$ achieving the target in simulation; (ii) the staffing $c^{\mathrm{fluid}}_\varepsilon$ obtained from the fluid rules in Sections~\ref{sec:staffing}--\ref{sec:abandon}; and (iii) the diffusion-based staffing $c^{\mathrm{diff}}_\varepsilon$ defined implicitly by Theorem~\ref{thm:joint-normal_server_staff-delay} and Theorem~\ref{thm:alpha-joint-normal}. Tables~\ref{tab:delay-staffing} and~\ref{tab:abandon-staffing} summarize these comparisons.

Across all delay targets tested, the diffusion rule consistently narrows the gap between the fluid prediction and the empirical minimum, often matching $c^{\mathrm{sim}}_\varepsilon$ exactly.  For abandonment targets, the implicit bivariate-normal rule produces staffing levels within zero or one servers of the simulation-optimal value, while the fluid rule provides a necessary but not sufficient lower bound, as expected.

These results demonstrate that the diffusion approximation offers substantial accuracy gains over the fluid model in staffing problems, while remaining computationally trivial once the single scalar equation $\alpha(c)=\varepsilon$ is solved.

\section{Conclusion and extensions}
\label{sec:conclusions}

This paper studies the Erlang--$S^*$ queue, in which servers become
temporarily unavailable after service, with drone charging as a motivating
application. We derive the fluid equilibria, interior diffusion limits, and
associated covariance formulas for the joint queue-capacity process $(Q,S)$.
We then use these approximations to construct staffing rules for delay and
abandonment targets and introduce a Gaussian moment closure for the transition
near the critical capacity
\[
c^*=\frac{\lambda}{\mu}+\frac{\lambda p}{\gamma}.
\]

The analysis identifies two distinct effects of stochastic server
availability. First, charging creates a first-order capacity requirement:
in underload, the term $\lambda p/\gamma$ represents the average number of
servers unavailable for charging. Second, fluctuations in $S$ affect the
variance of the gap $Q-S$ and therefore cannot generally be ignored in
delay and abandonment calculations. In the underloaded diffusion,
$\Cov(Q,S)=0$, whereas in overload the covariance is nonzero and its sign
depends on the parameters; specifically,
\[
\Cov(Q,S)<0
\quad\Longleftrightarrow\quad
\mu(1-p)>\gamma+\theta.
\]
At $c^*$, the two interior diffusion approximations do not agree, reflecting
the failure of a one-sided linearization when the system frequently crosses
between $Q<S$ and $Q>S$. The Gaussian closure provides a continuous
approximation through this finite-system transition.

In the configurations reported in the numerical study, the diffusion
staffing rules are substantially closer to the simulation-based benchmarks
than the fluid rules, particularly for abandonment targets. These experiments
support the practical value of incorporating server variability and
covariance, although they do not establish uniform accuracy over all parameter
settings.

The model also clarifies how operational changes affect capacity. Reducing
the charging probability $p$ or increasing the return rate $\gamma$ lowers
the critical capacity $c^*$ and increases effective server availability.
Technologies such as battery swapping or faster charging can therefore be
interpreted as changes in these parameters, although evaluating such policies
directly would require a model that includes their costs and operational
constraints.

Several extensions remain open. Heterogeneous servers, nonexponential charging
times, state-dependent charging decisions, and spatial routing would provide
more realistic models but would generally require higher-dimensional state
descriptions or additional numerical methods. Risk-sensitive staffing is
another natural direction: rather than targeting only mean abandonment or
delay probabilities, one could control tail-based congestion measures as in
\citet{pender2016risk}. Finally, recent work on customer overlap
\citep{boxma2024number,palomo2023maximum,ko2023number,newmaninfinite25,
lucas2025overlapping,boxma2024overlap} suggests studying how stochastic server
availability affects overlap counts and durations.




\bibliographystyle{plainnat}
\bibliography{references}

\newpage


\appendix
\section{Proof Sketches and Technical Details}\label{app:proofs}
\subsection{Proof of theorem \eqref{thm:fluid-limit}}\label{app:fluid-proof}
\begin{proof}
Recall that we have by the strong approximation for Poisson processes (FLLN)
\begin{align*}
\frac{N_A(\lambda n t)}{n} &= \lambda t + \varepsilon_A^n(t), \\
\frac{N_B(n x)}{n} &= x + \varepsilon_B^n(x), \\
\frac{N_{Ab}(n x)}{n} &= x + \varepsilon_{Ab}^n(x), \\
\frac{N_C(n x)}{n} &= x + \varepsilon_C^n(x), \\
\frac{N_B(n x)}{n}\frac{1}{N_B(n x)} \sum_{i=1}^{N_B(n x)} \xi_i &= \bar{\xi} x + \delta^n(x),
\end{align*}
where $\varepsilon_A^n(t),\;\varepsilon_B^n(t),\; \varepsilon_{Ab}^n(t), \varepsilon_C^n(t)$ are martingales of order $O_p(n^{-1/2})$ uniformly on compact intervals. Moreover, the convergence $\delta^n(x) \to 0$ follows from the law of large numbers for random sums: since $\{\xi_i\}$ are i.i.d. Bernoulli$(p)$ independent of $N_B$, we have $\frac{1}{N_B(nx)}\sum_{i=1}^{N_B(nx)} \xi_i \to p$ a.s. by the strong law, and $N_B(nx)/n \to x$ uniformly on compacts by the FLLN for Poisson processes; together these give $\frac{1}{n}\sum_{i \leq N_B(nx)} \xi_i \to px$ uniformly on $[0,T]$ in probability, contributing the term $p\mu \int_0^t \min(\bar{q}(s), \bar{s}(s))\,ds$ to the fluid equation for $\bar{s}(t)$.

Now, subtract the candidate fluid limit and use the Lipschitz properties of
$\min(\cdot,\cdot)$ and $(\cdot)^+$ to obtain
\begin{align*}
    \|(\bar Q^n,\bar S^n)-(\bar q,\bar s)\|(t)
    \le &\|(\bar Q^n(0),\bar S^n(0))-(q_0,s_0)\|
     + o_p(1) \\
     &+ L\!\int_0^t \|(\bar Q^n,\bar S^n)-(\bar q,\bar s)\|(s)\,ds
\end{align*}
for some constant $L>0$. Recall that
\begin{align*}
\bar q(t)&=q_0 + \lambda t
 - \mu\int_0^t \min\{\bar q(s),\bar s(s)\}\,ds
 - \theta\int_0^t \big(\bar q(s)-\bar s(s)\big)^+ ds,\\
\bar s(t)&=s_0 + \gamma\int_0^t \big(c-\bar s(s)\big)\,ds
 - \mu \bar\xi \int_0^t \min\{\bar q(s),\bar s(s)\}\,ds.
\end{align*}

(Note that with this property, we can already conclude by Kurtz 1978 paper that this process converges to the fluid limit)

In other words we can say that:
\begin{align}
0 &\leq -|\bar{Q}^n(t) - \bar q(t) |+| \bar{Q}^n(0) - \bar q(0)|  + \left| \frac{N_A(\lambda n t)}{n} -\lambda t\right| \label{eq:1qn}\\
&\quad + \left| \frac{N_B\left( \mu n \int_0^t \min(\bar{Q}^n(s), \bar{S}^n(s)) ds \right)}{n} -  \mu  \int_0^t \min(\bar{Q}^n(s), \bar{S}^n(s))ds\right| \label{eq:2qn}\\
&\quad + \left| \mu  \int_0^t \min(\bar{Q}^n(s), \bar{S}^n(s))ds - \mu \int_0^t \min (\bar q(s) , \bar s(s))ds\right| \label{eq:3qn_linear_drift}\\
&\quad + \left| \frac{N_{Ab}\left( \theta n \int_0^t (\bar{Q}^n(s) - \bar{S}^n(s))^+ ds \right)}{n} - \theta \int_0^t (\bar Q^n(s) - \bar S^n(s))^+ds \right| \label{eq:4qn} \\
&\quad + \left| \theta \int_0^t (\bar Q^n(s) - \bar S^n(s))^+ds - \theta \int_0^t (\bar q(s) - \bar s(s))^+ds \right|,\label{eq:5qn_linear_drift}
\end{align}
with \eqref{eq:1qn}, \eqref{eq:2qn}, \eqref{eq:4qn} corresponding to the assumptions and the convergence of scaled Poisson processes by strong approximation. On the other hand, we know that \eqref{eq:3qn_linear_drift}, \eqref{eq:5qn_linear_drift} contain the functions $\min(\cdot)$ and $(\cdot)^+$ that are 1-Lipschitz. Therefore 
\begin{align*}
    \left| \mu  \int_0^t \min(\bar{Q}^n(s), \bar{S}^n(s))ds - \mu \int_0^t \min (\bar q(s) , \bar s(s))ds\right| &\leq \mu \int_0^t | \bar Q^n(s)-\bar q(s)|\\ &\quad \;+ |\bar S^n(s)-\bar s(s)|ds \\
    \left| \theta \int_0^t (\bar Q^n(s) - \bar S^n(s))^+ds - \theta \int_0^t (\bar q(s) - \bar s(s))^+ds \right| &\leq \theta \int_0^t | \bar Q^n(s)-\bar q(s)| \\ &\quad \;+ |\bar S^n(s)-\bar s(s)|ds.
\end{align*}

Analogously, we have that

\begin{align}
0 &\leq -|\bar{S}^n(t) - \bar s(t)| + |\bar{S}^n(0) - \bar s(0)| \label{eq:1sn}\\
&\quad +\left| \frac{N_C\left( \gamma n \int_0^t (c - \bar{S}^n(s)) ds \right)}{n}
- \gamma\int_0^t (c-\bar S^n(s))ds\right| \label{eq:2sn}\\
&\quad +\left| \gamma\int_0^t (c-\bar S^n(s))ds - \gamma \int_0^t (c-\bar s(s))ds \right| \label{eq:3sn_linear_drift}\\
&\quad + \left|\frac{1}{n} \sum_{i=1}^{N_B(\mu n \int_0^t \min(\bar{Q}^n(s), \bar{S}^n(s)) ds)} \xi_i - \mu\bar\xi\int_0^t \min (\bar Q^n(s),\bar S^n(s)) ds \right| \label{eq:4sn}\\
&\quad + \left|\mu\bar\xi\int_0^t \min (\bar Q^n(s),\bar S^n(s))ds - \mu \bar\xi \int_0^t \min (\bar q(s),\bar s(s)) ds \right|,\label{eq:5sn_linear_drift}
\end{align}
with \eqref{eq:1sn}, \eqref{eq:2sn}, \eqref{eq:4sn} corresponding to the assumptions, the convergence of scaled Poisson processes by strong approximation and the convergence of the random sum term which are shown how are handled above. Moreover, the same as before, we know that \eqref{eq:3sn_linear_drift}, \eqref{eq:5sn_linear_drift} contain the functions $\min(\cdot)$ and a linear difference that are 1-Lipschitz. Hence

\begin{align*}
    \left| \gamma\int_0^t (c-\bar S^n(s))ds - \gamma \int_0^t (c-\bar s(s))ds \right| &\leq \gamma \int_0^t  |\bar S^n(s)-\bar s(s)|ds \\
    \left|\mu\bar\xi\int_0^t \min (\bar Q^n(s),\bar S^n(s))ds - \mu \bar\xi \int_0^t \min (\bar q(s),\bar s(s)) ds \right| &\leq \mu\bar\xi \int_0^t | \bar Q^n(s)-\bar q(s)| \\&\quad +\; |\bar S^n(s)-\bar s(s)|ds.
\end{align*}

Now, notice that both inequalities depend on this term, so if we sum them and then bound the sum, this should give us the result we want. Let 
\[
E_n(t):= |\bar Q^n(t)-\bar q(t)|+|\bar S^n(t)-\bar s(t)|,
\]
and collect all Poisson remainders into
\begin{align*}
    A_n(T) := \sup_{0\leq u\leq T} \Bigg( &\left| \frac{N_A(\lambda n u)}{n} -\lambda u\right| + \left| \frac{N_B(\cdot)}{n} - \cdot\right|+ \left| \frac{N_{Ab}(\cdot)}{n} - \cdot\right| \\ &+ \left| \frac{N_C(\cdot)}{n} - \cdot\right| + \left| \frac{1}{n} \sum_{i\leq N_B(\cdot)} \xi_i-\bar\xi \right| \Bigg).
\end{align*}

Then, we know that there exists an $L>0$ (independent of $n$) such that for all $t\in [0,T]$,
\[
E_n(t) \leq E_n(0) + A_n(T) + L\int_o^t E_n(s)ds.
\]
Now, lets define $F_n(t) : = \sup_{0\leq u\leq t} E_n(u)$, then we can deduce that
\[
F_n(t) \leq E_n(0) + A_n(T) + L\int_o^t F_n(s)ds.
\]
With this we can apply Gr\"onwall's inequality
\[
F_n(t) \leq (E_n(0) + A_n(T)) e^{Lt} \; \Rightarrow \;  \sup_{0\leq u\leq t} E_n(u) \leq (E_n(0) + A_n(T)) e^{Lt}
\]
The initial conditions converge by assumption and $A_n(T)$ converges as we showed before, so the right-hand side vanishes in probability, proving the claim.
\end{proof}

A standard Kurtz FLLN argument for time-changed Poisson processes and random sums applied to \eqref{eq:Q-int}--\eqref{eq:S-int} yields \eqref{eq:fluid-q}--\eqref{eq:fluid-s}. Uniqueness follows from Lipschitz continuity of the drift.

\subsection{Proof of Theorem~\ref{thm:diffusion_limit}}
\label{app:diffusion-proof}

\begin{proof}
Let
\[
X^n(t):=
\begin{pmatrix}
Q^n(t)\\
S^n(t)
\end{pmatrix},
\qquad
\bar X^n(t):=\frac{X^n(t)}{n}.
\]
Under the many-server scaling, the transition intensities have the
density-dependent form
\[
\lambda_k^n(nx)=n\beta_k(x),
\qquad k=1,\ldots,K,
\]
where $\ell_k\in\mathbb R^2$ is the jump vector associated with event type
$k$. Using independent unit-rate Poisson processes $N_k$, the process admits
the random time-change representation
\[
X^n(t)
=
X^n(0)
+
\sum_{k=1}^K
\ell_k
N_k\left(
n\int_0^t\beta_k\bigl(\bar X^n(u)\bigr)\,du
\right).
\]
Consequently,
\begin{equation}
\bar X^n(t)
=
\bar X^n(0)
+
\int_0^t b\bigl(\bar X^n(u)\bigr)\,du
+
M^n(t),
\label{eq:scaled-martingale-decomposition}
\end{equation}
where
\[
b(x):=\sum_{k=1}^K\ell_k\beta_k(x)
\]
is the fluid drift and $M^n$ is the associated martingale.

The fluid law of large numbers gives
\[
\sup_{0\leq t\leq T}
\left\|\bar X^n(t)-x(t)\right\|
\xrightarrow{p}0.
\]
By assumption,
\[
\delta_T:=
\inf_{0\leq t\leq T}|q(t)-s(t)|>0.
\]
Hence, with probability tending to one, $\bar X^n$ and $x$ remain on the same
face throughout $[0,T]$. On this face the drift is affine, with Jacobian
$J_r$, where $r\in\{\mathrm{UL},\mathrm{OL}\}$ denotes the active regime.

Subtracting the fluid equation
\[
x(t)=x(0)+\int_0^t b(x(u))\,du
\]
from \eqref{eq:scaled-martingale-decomposition} and multiplying by $\sqrt n$
gives
\begin{equation}
\widehat X^n(t)
=
\widehat X^n(0)
+
\int_0^t
J_r\widehat X^n(u)\,du
+
\widehat M^n(t)
+
o_p(1),
\label{eq:scaled-diffusion-decomposition}
\end{equation}
uniformly on $[0,T]$, where
\[
\widehat M^n(t):=\sqrt n\,M^n(t).
\]

The predictable quadratic variation of $\widehat M^n$ is
\[
\left\langle\widehat M^n\right\rangle(t)
=
\int_0^t
\sum_{k=1}^K
\ell_k\ell_k^\top
\beta_k\bigl(\bar X^n(u)\bigr)\,du.
\]
By fluid convergence and continuity of the rate functions,
\[
\left\langle\widehat M^n\right\rangle(t)
\xrightarrow{p}
\int_0^t\Sigma_r(u)\,du,
\qquad
\Sigma_r(u)
:=
\sum_{k=1}^K
\ell_k\ell_k^\top\beta_k(x(u)).
\]
Moreover, the jumps of $\widehat M^n$ are of order $n^{-1/2}$, so the
Lindeberg condition holds. The martingale functional central limit theorem
therefore yields
\[
\widehat M^n
\Rightarrow
\int_0^\cdot\Sigma_r^{1/2}(u)\,dW(u)
\]
in $D([0,T],\mathbb R^2)$.

The solution map associated with the linear integral equation in
\eqref{eq:scaled-diffusion-decomposition} is continuous. Hence,
\[
\widehat X^n\Rightarrow\widehat X,
\]
where $\widehat X$ is the unique strong solution of
\[
d\widehat X(t)
=
J_r\widehat X(t)\,dt
+
\Sigma_r^{1/2}(t)\,dW(t).
\]

If $x(t)\equiv x^*$ is an interior fluid equilibrium, then
$\Sigma_r(t)\equiv\Sigma_r$ and the limit is the Ornstein--Uhlenbeck process
\[
d\widehat X(t)
=
J_r\widehat X(t)\,dt
+
\Sigma_r^{1/2}\,dW(t).
\]
Because $J_r$ is Hurwitz in both interior regimes, this process has a unique
stationary Gaussian distribution with covariance matrix $V_r$ satisfying
\[
J_rV_r+V_rJ_r^\top+\Sigma_r=0.
\]
\end{proof}

\subsection{Covariance and Variance Computaions for Underloaded Regime, proof of Theorem \ref{thm:diffusion_UL}.}\label{app:underloaded-proof}
\begin{proof}
	Here the active faces are $\min(q,s)=q$ and $(q-s)^+=0$. The fluid fixed point is
$q^*=\lambda/\mu$ and $s^*=c-\lambda p/\gamma$.
Linearizing the fluid drift on this face yields
\[
J_{\mathrm{UL}}\;=\;
\begin{pmatrix}
-\mu & 0\\[2pt]
-\,p\mu & -\,\gamma
\end{pmatrix}.
\]
Each primitive event contributes its increment vector and mean rate at the fixed point:
\[
\begin{array}{c|c|c}
\text{event} & \text{increment } \nu_e & \text{rate at }(q^*,s^*) \\\hline
\text{arrival} & (1,0) & \lambda \\
\text{unmarked completion} & (-1,0) & (1-p)\,\mu\,q^* \\
\text{marked completion} & (-1,-1) & p\,\mu\,q^* \\
\text{return-from-charge} & (0,1) & \gamma\,(c-s^*)
\end{array}
\]
Hence the diffusion matrix is
\[
\Sigma_{\mathrm{UL}}
=\lambda
\begin{pmatrix}1&0\\0&0\end{pmatrix}
+(1-p)\mu q^*\begin{pmatrix}1&0\\0&0\end{pmatrix}
+p\mu q^*\begin{pmatrix}1&1\\[2pt]1&1\end{pmatrix}
+\gamma(c-s^*)\begin{pmatrix}0&0\\[2pt]0&1\end{pmatrix}.
\]
Using the mean balance $\gamma(c-s^*)=p\mu q^*$ and $q^*=\lambda/\mu$ simplifies this to
\[
\Sigma_{\mathrm{UL}}
=
\begin{pmatrix}
2\lambda & p\mu q^*\\[2pt]
p\mu q^* & 2p\mu q^*
\end{pmatrix}.
\]
The stationary covariance $V_{\mathrm{UL}}=\operatorname{Cov}_\pi(\hat X)$ solves the Lyapunov equation
\[
J_{\mathrm{UL}}\,V_{\mathrm{UL}}+V_{\mathrm{UL}}\,J_{\mathrm{UL}}^\top+\Sigma_{\mathrm{UL}}=0,
\]
whose unique solution is
\[
V_{\mathrm{UL}}
=
\begin{pmatrix}
\lambda/\mu & 0\\[2pt]
0 & \lambda p/\gamma
\end{pmatrix}.
\]
Therefore
\[
\Var[Q]\approx v_{qq}^*=\frac{\lambda}{\mu},\qquad
\Var[S]\approx v_{ss}^*=\frac{\lambda p}{\gamma},\qquad
\operatorname{Cov}[Q,S]\approx 0,
\]
\end{proof}

\subsection{Covariance and Variance Computations for Overloaded Regime, proof of Theorem \ref{thm:diffusion_OL}}\label{app:overloaded-proof}
\begin{proof}
	Here the active faces are $\min(q,s)=s$ and $(q-s)^+=q-s$. The fluid fixed point satisfies
$s^*=\kappa c$, $\kappa=\gamma/(\gamma+p\mu)$, and $q^*=\frac{\lambda}{\theta}
+s^*\!\left(1-\frac{\mu}{\theta}\right)$.
Linearizing on this face gives
\[
J_{\mathrm{OL}}\;=\;
\begin{pmatrix}
-\,\theta & -(\mu-\theta)\\[2pt]
0 & -(\gamma+p\mu)
\end{pmatrix}.
\]
Event increments and rates at $(q^*,s^*)$ are now
\[
\begin{array}{c|c|c}
\text{event} & \text{increment } \nu_e & \text{rate at }(q^*,s^*) \\\hline
\text{arrival} & (1,0) & \lambda \\
\text{unmarked completion} & (-1,0) & (1-p)\,\mu\,s^* \\
\text{marked completion} & (-1,-1) & p\,\mu\,s^* \\
\text{abandonment} & (-1,0) & \theta\,(q^*-s^*) \\
\text{return-from-charge} & (0,1) & \gamma\,(c-s^*)=p\mu s^*
\end{array}
\]
which yields
\begin{align*}
    \Sigma_{\mathrm{OL}} &=\lambda\begin{pmatrix}1&0\\0&0\end{pmatrix}
+(1-p)\mu s^*\begin{pmatrix}1&0\\0&0\end{pmatrix}
+p\mu s^*\begin{pmatrix}1&1\\[2pt]1&1\end{pmatrix}\\
&\quad +\theta(q^*-s^*)\begin{pmatrix}1&0\\0&0\end{pmatrix}
+\gamma(c-s^*)\begin{pmatrix}0&0\\[2pt]0&1\end{pmatrix}.
\end{align*}
Using the mean identities $q^*-s^*=(\lambda-\mu s^*)/\theta$ and $\gamma(c-s^*)=p\mu s^*$,
this simplifies to
\[
\Sigma_{\mathrm{OL}}
=
\begin{pmatrix}
2\lambda & p\mu s^*\\[2pt]
p\mu s^* & 2p\mu s^*
\end{pmatrix}.
\]
The stationary covariance $V_{\mathrm{OL}}$ solves
\[
J_{\mathrm{OL}}\,V_{\mathrm{OL}}+V_{\mathrm{OL}}\,J_{\mathrm{OL}}^\top+\Sigma_{\mathrm{OL}}=0,
\]
with the unique solution
\[
V_{\mathrm{OL}}
=
\begin{pmatrix}
\displaystyle\frac{\lambda + (\theta-\mu)v_{qs}^*}{\theta}  & \displaystyle \frac{p\mu s^*-(\mu-\theta)\,v_{ss}^*}{\theta+\gamma+p\mu}\\[10pt]
\displaystyle \frac{p\mu s^*-(\mu-\theta)\,v_{ss}^*}{\theta+\gamma+p\mu} &
\displaystyle \frac{p\mu}{\gamma+p\mu}\,s^*
\end{pmatrix},
\]
with
\[
\qquad
v_{ss}^*=\frac{p\mu}{\gamma+p\mu}\,s^*=\frac{c\,\gamma p\mu}{(\gamma+p\mu)^2}.
\]
Equivalently,
\[
\Var[Q]\approx \frac{\lambda}{\theta} + \frac{\theta - \mu }{\theta}\cdot\frac{c\,\gamma p\mu}{(\gamma+p\mu)^2}\cdot
\frac{\gamma+\theta+p\mu-\mu}{\theta+\gamma+p\mu},
\]
\[
\Var[S]\approx \frac{c\,\gamma p\mu}{(\gamma+p\mu)^2},\quad 
\operatorname{Cov}[Q,S]\approx
\frac{c\,\gamma p\mu}{(\gamma+p\mu)^2}\cdot
\frac{\gamma+\theta+p\mu-\mu}{\theta+\gamma+p\mu}.
\]
\end{proof}

\subsection{Staffing Quadratics}
The joint-normal delay condition \eqref{eq:delay-prob} reduces to a quadratic in $c$ in both regimes; we report the larger root for target $\varepsilon$.
\subsubsection{Deterministic Servers, proof of Theorem \ref{thm:determ_server_staff-delay}}\label{app:deterministic-staffing-proof}
\begin{proof}
    In the underloaded case, we have 
    \begin{eqnarray}
        \overline{\Phi} \left( \frac{s^* - q^*}{\sqrt{ q^*}}  \right) &=& \epsilon \\
       \frac{s^* - q^*}{\sqrt{ q^*}}  &=& \overline{\Phi}^{-1} \left( \epsilon \right) \\
       s^* &=& q^* + \sqrt{ q^*} \ \overline{\Phi}^{-1} \left( \epsilon \right) \\
       c - \frac{\lambda p}{\gamma} &=& \frac{\lambda}{\mu} + \sqrt{ \frac{\lambda}{\mu} }  \ \overline{\Phi}^{-1} \left( \epsilon \right)
    \end{eqnarray}
    This implies that 
    \begin{eqnarray}
        c_{\epsilon} \approx \frac{\lambda p}{\gamma} +  \frac{\lambda}{\mu} + \sqrt{ \frac{\lambda}{\mu} }  \ \overline{\Phi}^{-1} \left( \epsilon \right) .
    \end{eqnarray}
    
    Note that this is quite different than the case where the servers are not stochastic.  We see a shift in the number of servers needed of $\frac{\lambda p}{\gamma}$.  
    
    In the overloaded case, we have 
            
    \begin{eqnarray}
       s^* &=& q^* + \sqrt{ q^*} \ \overline{\Phi}^{-1} \left( \epsilon \right) \\
       \frac{\gamma }{\gamma + p \mu } c &=& \frac{\lambda - \frac{\gamma \mu c}{\gamma + p \mu } }{\theta} + \frac{\gamma c}{\gamma + p \mu }  + \sqrt{ \frac{\lambda - \frac{\gamma \mu c}{\gamma + p \mu } }{\theta} + \frac{\gamma c}{\gamma + p \mu }  }  \ \overline{\Phi}^{-1} \left( \epsilon \right) \\
        \frac{\gamma \mu c}{\theta (\gamma + p \mu) }   &=& \frac{\lambda}{\theta} + \sqrt{ \frac{\lambda - \frac{\gamma \mu c}{\gamma + p \mu } }{\theta} + \frac{\gamma c}{\gamma + p \mu }  }  \ \overline{\Phi}^{-1} \left( \epsilon \right)  \\
        c &=& \frac{\lambda (\gamma + p \mu)}{\gamma \mu } + \frac{\theta (\gamma + p \mu)}{\gamma \mu } \sqrt{ \frac{\lambda - \frac{\gamma \mu c}{\gamma + p \mu } }{\theta} + \frac{\gamma c}{\gamma + p \mu }  }  \ \overline{\Phi}^{-1} \left( \epsilon \right) \\
        c &=& \frac{\lambda (\gamma + p \mu)}{\gamma \mu } + \frac{\theta (\gamma + p \mu)}{\gamma \mu } \sqrt{ \frac{\lambda}{\theta} + \frac{ \gamma  c (\theta - \mu) }{\theta(\gamma + p \mu)} } \ \overline{\Phi}^{-1} \left( \epsilon \right)
    \end{eqnarray}
    
    Solving for $c$, simplifying, and writing down $z_\epsilon = \overline{\Phi}^{-1}(\epsilon)$, we get the desired expression
    \[
    c_\epsilon \;=\;
    \frac{\gamma + p \mu}{2 \gamma \mu^{2}}
    \left[
    2 \lambda \mu \;-\; \theta(\mu - \theta)\,z_\epsilon^{2}
    \;+\;
    \sqrt{\Big(2 \lambda \mu - \theta(\mu - \theta)\,z_\epsilon^{2}\Big)^{2}
    - 4 \lambda \mu (\lambda - \theta\, z_\epsilon^{2})}
    \right].
    \]
\end{proof}
\subsubsection{Joint-normal Queue and Server Distribution for delay approximation, proof of Theorem \ref{thm:joint-normal_server_staff-delay}}\label{app:joint-normal-correlated-proof}
\begin{proof}
The joint-normal delay condition is
\begin{align*}
    \Pr(Q\ge S) \;&=\; \Phi\!\left(\frac{q^*-s^*}{\sqrt{\operatorname{Var}[Q]+\operatorname{Var}[S]-2\operatorname{Cov}[Q,S]}}\right)\;=\;\epsilon \\ \Longleftrightarrow\quad s^* \;&=\; q^* \;+\; z_\epsilon\,\sqrt{v_{qq}+v_{ss}-2v_{qs}}.
\tag{$\star$}
\end{align*}

\noindent\emph{(a) Underloaded.}
Insert $q^*=\lambda/\mu$, $s^*=c-\lambda p/\gamma$, and the underloaded closures
$v_{qq}=\lambda/\mu$, $v_{ss}=\lambda p/\gamma$, $v_{qs}=0$ into $(\star)$:
\begin{equation}
\label{eq:c-under-eqn}
c_\epsilon \;=\;
\frac{\lambda p}{\gamma} \;+\; \frac{\lambda}{\mu}
\;+\; z_\epsilon\,
\sqrt{\,\frac{\lambda}{\mu}+\frac{\lambda p}{\gamma}\,}\,
\end{equation}
which is the recommended staffing level. 

\medskip
\noindent\emph{(b) Overloaded.}
Write $s^*=\kappa c$, $q^*=\frac{\lambda}{\theta}+\kappa c(1-\frac{\mu}{\theta})$.
Use the OU second–order terms listed in the theorem; then
\[
v_{qq}+v_{ss}-2v_{qs}
\;=\;
\frac{\lambda}{\theta} \;+\; U\,c,
\quad
U:=\kappa\!\left(1 - \frac{\mu}{\theta}\right)
- \frac{2\,\gamma p\mu}{(\gamma+p\mu)^2}\cdot
\frac{\gamma+\theta+p\mu-\mu}{\theta+\gamma+p\mu}.
\]
Plug into $(\star)$ and isolate the square root:
\[
\frac{\mu\kappa c - \lambda}{\theta}
\;=\;
z_\epsilon\,\sqrt{\;\frac{\lambda}{\theta} + U\,c\;}.
\]
Square and rearrange to obtain the quadratic
\[
\mu^2\kappa^2\,c^2
\;-\; \Bigl(2\mu\kappa\lambda + z_\epsilon^2\,\theta^2 U\Bigr)c
\;+\; \Bigl(\lambda^2 - z_\epsilon^2\,\theta\lambda\Bigr)
\;=\; 0,
\]
whose larger root yields \eqref{eq:c-over-cov-final}:
\[
c_\epsilon
\;=\;
\frac{\,2\mu\kappa\lambda \;+\; z_\epsilon^2\,\theta^2\,U\;+\;
\sqrt{\bigl(2\mu\kappa\lambda + z_\epsilon^2\,\theta^2 U\bigr)^2
- 4\,\mu^2\kappa^2\,\bigl(\lambda^2 - z_\epsilon^2\,\theta\lambda\bigr)}\;}
{2\,\mu^2\kappa^2}
\]
\end{proof}

\subsection{Exact forward equations for the first two moments}
\label{app:exact-moment-equations}

We derive the transient moment equations used in the Gaussian closure.
Let
\[
B(t):=Q(t)\wedge S(t),
\qquad
W(t):=(Q(t)-S(t))^+,
\]
denote the number of busy servers and the number of waiting customers,
respectively. To simplify notation, the time argument is suppressed whenever
there is no ambiguity.

For any sufficiently integrable function \(f\), recall Dynkin's formula gives
\[
\frac{d}{dt}\E[f(Q(t),S(t))]
=
\E[\mathcal{L}f(Q(t),S(t))].
\]

Define the first moments
\[
m_Q(t):=\E[Q(t)],
\qquad
m_S(t):=\E[S(t)].
\]
Taking \(f(q,s)=q\) and \(f(q,s)=s\) in
\eqref{eq:generator} yields
\begin{align}
\dot m_Q
&=
\lambda-\mu\E[B]-\theta\E[W],
\label{eq:exact-mean-Q}
\\
\dot m_S
&=
\gamma(c-m_S)-p\mu\E[B].
\label{eq:exact-mean-S}
\end{align}

Next, define the raw second moments
\[
M_{QQ}(t):=\E[Q(t)^2],
\qquad
M_{SS}(t):=\E[S(t)^2],
\qquad
M_{QS}(t):=\E[Q(t)S(t)].
\]
Applying the generator to \(q^2\), \(s^2\), and \(qs\), respectively, gives
\begin{align}
\dot M_{QQ}
={}&
2\lambda m_Q+\lambda
-2\mu\E[QB]+\mu\E[B]
\nonumber\\
&-2\theta\E[QW]+\theta\E[W],
\label{eq:exact-raw-QQ}
\\
\dot M_{SS}
={}&
2\gamma\bigl(cm_S-M_{SS}\bigr)
+\gamma(c-m_S)
\nonumber\\
&-2p\mu\E[SB]+p\mu\E[B],
\label{eq:exact-raw-SS}
\\
\dot M_{QS}
={}&
\lambda m_S
-\mu\E[SB]
-p\mu\E[QB]
+p\mu\E[B]
\nonumber\\
&-\theta\E[SW]
+\gamma\bigl(cm_Q-M_{QS}\bigr).
\label{eq:exact-raw-QS}
\end{align}

To see the origin of the term \(p\mu\E[B]\) in
\eqref{eq:exact-raw-QS}, observe that a service completion followed by
charging produces the jump
\[
(\Delta Q,\Delta S)=(-1,-1).
\]
Hence,
\[
(Q-1)(S-1)-QS=-Q-S+1,
\]
and the \(+1\) produces the term \(p\mu\E[B]\).

Define
\[
v_{qq}(t):=\Var(Q(t)),
\qquad
v_{ss}(t):=\Var(S(t)),
\qquad
v_{qs}(t):=\Cov(Q(t),S(t)).
\]
Since
\[
v_{qq}=M_{QQ}-m_Q^2,
\qquad
v_{ss}=M_{SS}-m_S^2,
\qquad
v_{qs}=M_{QS}-m_Qm_S,
\]
differentiating and substituting
\eqref{eq:exact-mean-Q}--\eqref{eq:exact-raw-QS} gives
\begin{align}
\dot v_{qq}
={}&
\lambda+\mu\E[B]+\theta\E[W]
-2\mu\Cov(Q,B)
-2\theta\Cov(Q,W),
\label{eq:exact-var-Q}
\\
\dot v_{ss}
={}&
\gamma(c-m_S)+p\mu\E[B]
-2\gamma v_{ss}
-2p\mu\Cov(S,B),
\label{eq:exact-var-S}
\\
\dot v_{qs}
={}&
-\mu\Cov(S,B)
-\theta\Cov(S,W)
-\gamma v_{qs}
\nonumber\\
&-p\mu\Cov(Q,B)
+p\mu\E[B].
\label{eq:exact-cov-QS}
\end{align}
Equations
\eqref{eq:exact-mean-Q}--\eqref{eq:exact-cov-QS} are exact.
They are not closed because they involve nonlinear expectations and
covariances containing \(B=Q\wedge S\) and \(W=(Q-S)^+\).

\subsection{Bivariate-Gaussian moment closure}
\label{app:gaussian-moment-closure}

To close the moment equations, we approximate the transient joint
distribution of \((Q(t),S(t))\) by
\begin{equation}
\begin{pmatrix}
Q(t)\\
S(t)
\end{pmatrix}
\approx
\mathcal{N}
\left(
\begin{pmatrix}
m_Q(t)\\
m_S(t)
\end{pmatrix},
\begin{pmatrix}
v_{qq}(t)&v_{qs}(t)\\
v_{qs}(t)&v_{ss}(t)
\end{pmatrix}
\right).
\label{eq:gaussian-closure-assumption}
\end{equation}
This is a moment-closure approximation rather than a claim that the
finite-system stationary distribution is exactly Gaussian.

Define the gap
\[
D:=Q-S.
\]
Under \eqref{eq:gaussian-closure-assumption},
\[
D\approx\mathcal{N}(m_D,\sigma_D^2),
\]
where
\begin{equation}
m_D:=m_Q-m_S,
\qquad
\sigma_D^2:=v_{qq}+v_{ss}-2v_{qs}.
\label{eq:gap-mean-variance}
\end{equation}
Let
\begin{equation}
a:=\frac{m_D}{\sigma_D},
\qquad
P:=\Phi(a),
\label{eq:standardized-gap}
\end{equation}
where \(\phi\) and \(\Phi\) are the standard normal density and
distribution functions. The standard positive-part identity gives
\begin{equation}
w
:=
\E[W]
=
\E[D^+]
\approx
\sigma_D\phi(a)+m_D\Phi(a).
\label{eq:gaussian-expected-waiting}
\end{equation}
Because
\[
B=Q\wedge S=Q-(Q-S)^+=Q-W,
\]
we obtain
\begin{equation}
b
:=
\E[B]
\approx
m_Q-w.
\label{eq:gaussian-expected-busy}
\end{equation}
Equivalently,
\[
b
=
m_Q(1-P)+m_SP-\sigma_D\phi(a).
\]

We next compute the covariance terms required by
\eqref{eq:exact-var-Q}--\eqref{eq:exact-cov-QS}. If \(X\) and \(D\) are
jointly Gaussian, then Stein's identity implies
\begin{equation}
\Cov(X,D^+)
=
\Cov(X,D)\Phi\left(\frac{\E[D]}{\sqrt{\Var(D)}}\right).
\label{eq:stein-positive-part}
\end{equation}
Since
\[
\Cov(Q,D)=v_{qq}-v_{qs},
\qquad
\Cov(S,D)=v_{qs}-v_{ss},
\]
it follows that
\begin{align}
\Cov(Q,W)
&\approx
(v_{qq}-v_{qs})P,
\label{eq:closure-cov-QW}
\\
\Cov(S,W)
&\approx
(v_{qs}-v_{ss})P.
\label{eq:closure-cov-SW}
\end{align}
Using \(B=Q-W\), we further obtain
\begin{align}
\Cov(Q,B)
&\approx
v_{qq}(1-P)+v_{qs}P,
\label{eq:closure-cov-QB}
\\
\Cov(S,B)
&\approx
v_{qs}(1-P)+v_{ss}P.
\label{eq:closure-cov-SB}
\end{align}

For completeness, the corresponding raw mixed moments are
\begin{align}
\E[QW]
&\approx
m_Qw+(v_{qq}-v_{qs})P,
\label{eq:closure-EQW}
\\
\E[SW]
&\approx
m_Sw+(v_{qs}-v_{ss})P,
\label{eq:closure-ESW}
\\
\E[QB]
&\approx
m_Qb+v_{qq}(1-P)+v_{qs}P,
\label{eq:closure-EQB}
\\
\E[SB]
&\approx
m_Sb+v_{qs}(1-P)+v_{ss}P.
\label{eq:closure-ESB}
\end{align}
These expressions may be substituted directly into the raw moment equations
\eqref{eq:exact-raw-QQ}--\eqref{eq:exact-raw-QS}.

Substituting
\eqref{eq:gaussian-expected-waiting}--\eqref{eq:closure-cov-SB}
into the exact mean, variance, and covariance equations gives the closed
five-dimensional system
\begin{align}
\dot m_Q
={}&
\lambda-\mu b-\theta w,
\label{eq:closed-mean-Q}
\\
\dot m_S
={}&
\gamma(c-m_S)-p\mu b,
\label{eq:closed-mean-S}
\\
\dot v_{qq}
={}&
\lambda+\mu b+\theta w
-2\mu\left[v_{qq}(1-P)+v_{qs}P\right]
\nonumber\\
&-2\theta(v_{qq}-v_{qs})P,
\label{eq:closed-var-Q}
\\
\dot v_{ss}
={}&
\gamma(c-m_S)+p\mu b
-2\gamma v_{ss}
\nonumber\\
&-2p\mu\left[v_{qs}(1-P)+v_{ss}P\right],
\label{eq:closed-var-S}
\\
\dot v_{qs}
={}&
-\mu\left[v_{qs}(1-P)+v_{ss}P\right]
-\theta(v_{qs}-v_{ss})P
\nonumber\\
&-\gamma v_{qs}
-p\mu\left[v_{qq}(1-P)+v_{qs}P\right]
+p\mu b,
\label{eq:closed-cov-QS}
\end{align}
where
\begin{align}
\sigma_D
&=
\sqrt{v_{qq}+v_{ss}-2v_{qs}},
\label{eq:closed-sigma-D}
\\
a
&=
\frac{m_Q-m_S}{\sigma_D},
\label{eq:closed-a}
\\
P
&=
\Phi(a),
\label{eq:closed-P}
\\
w
&=
\sigma_D\phi(a)+(m_Q-m_S)P,
\label{eq:closed-w}
\\
b
&=
m_Q-w.
\label{eq:closed-b}
\end{align}

The stationary Gaussian-closure moments are obtained by setting the
left-hand sides of
\eqref{eq:closed-mean-Q}--\eqref{eq:closed-cov-QS} equal to zero and solving
the resulting nonlinear algebraic system. Alternatively, the transient
system can be integrated numerically until the moments converge.

If
\[
\sigma_D^2=v_{qq}+v_{ss}-2v_{qs}
\]
is numerically zero, the Gaussian expressions are interpreted through their
deterministic limits:
\[
w=(m_Q-m_S)^+,
\qquad
b=m_Q\wedge m_S,
\]
and
\[
P=
\begin{cases}
1, & m_Q>m_S,\\
0, & m_Q<m_S,\\
1/2, & m_Q=m_S.
\end{cases}
\]

\section{Second-Order Properties of Excess Demand and Idle Capacity}
\label{app:positive-part}
\begin{figure}[t]
\centering

\begin{minipage}{0.48\textwidth}
    \captionsetup{font=scriptsize}
    \centering
    \includegraphics[width=\linewidth]{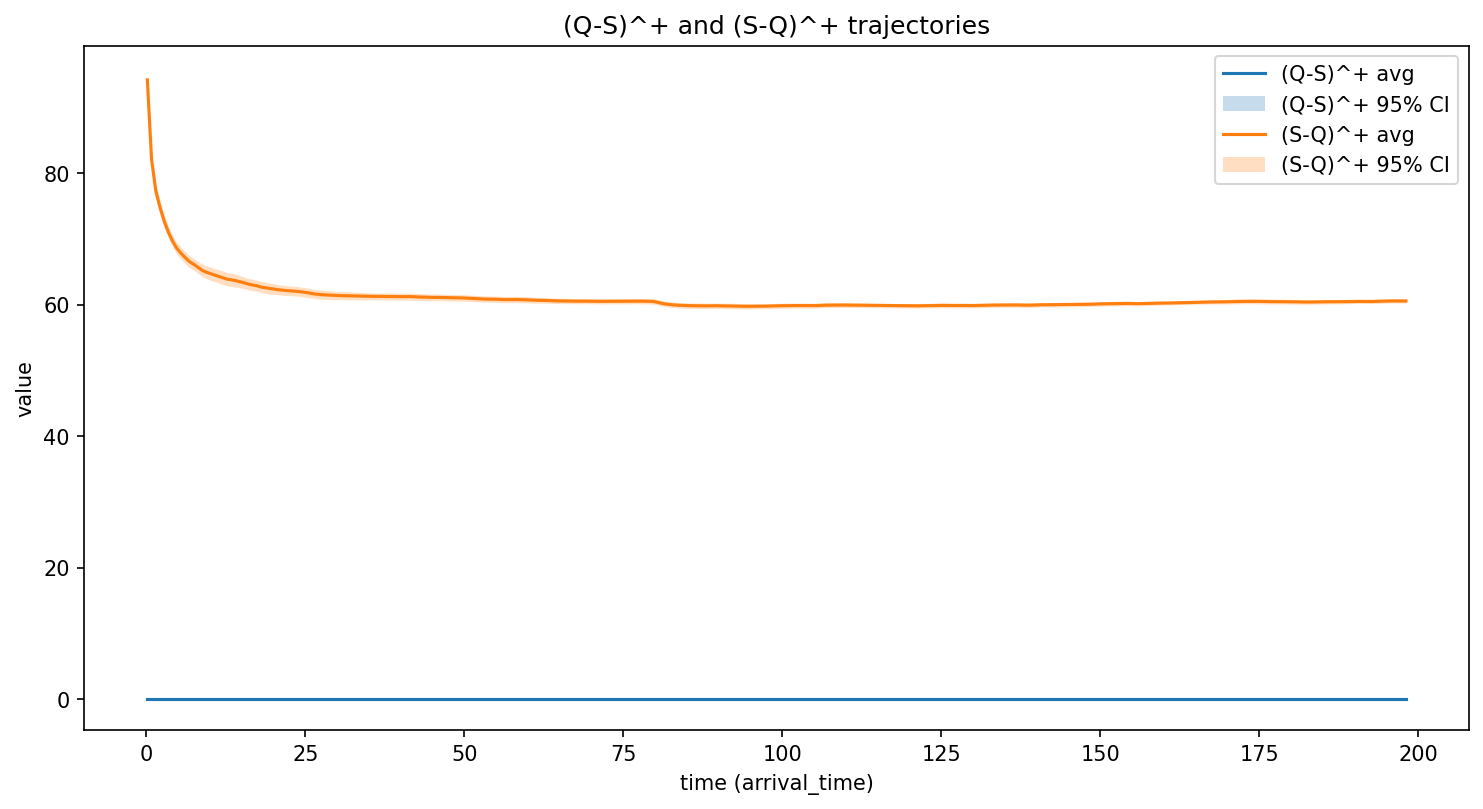}
    {\scriptsize \caption*{(a) $Q(t)$ and $S(t)$, $\lambda = 100,\, \mu = 5 , \, \theta = 1,\, p=0.1,\, \gamma = 0.5$ and $c=100$ (UL)}}
\end{minipage}
\hfill
\begin{minipage}{0.48\textwidth}
    \captionsetup{font=scriptsize}
    \centering
    \includegraphics[width=\linewidth]{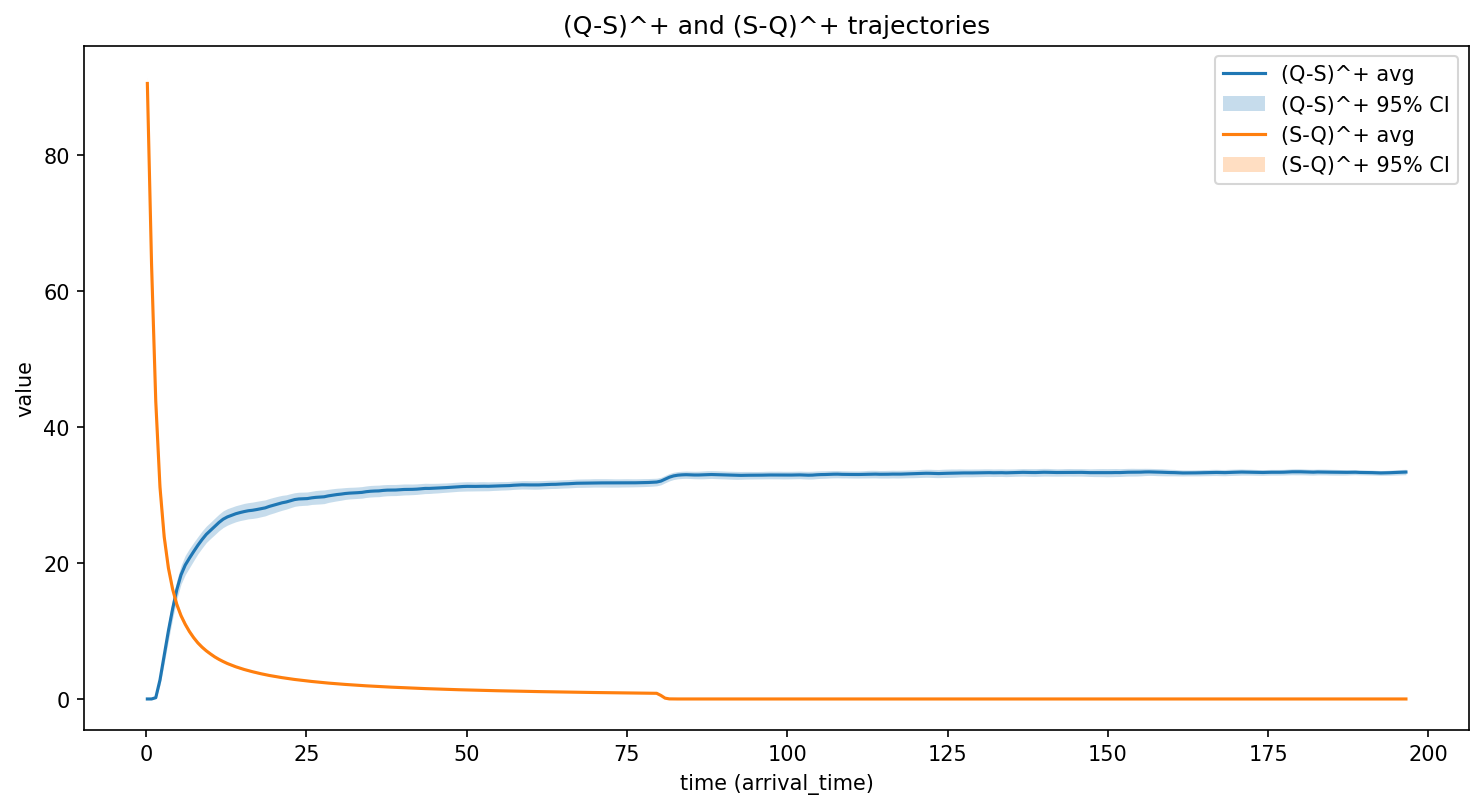}
    {\scriptsize\caption*{(b) $Q(t)$ and $S(t)$, $\lambda = 100,\, \mu = 1 , \, \theta = 1,\, p=0.5,\, \gamma = 1$ and $c=100$ (OL)}}
\end{minipage}

\vspace{0.5cm}

\begin{minipage}{0.48\textwidth}
    \captionsetup{font=scriptsize}
    \centering
    \includegraphics[width=\linewidth]{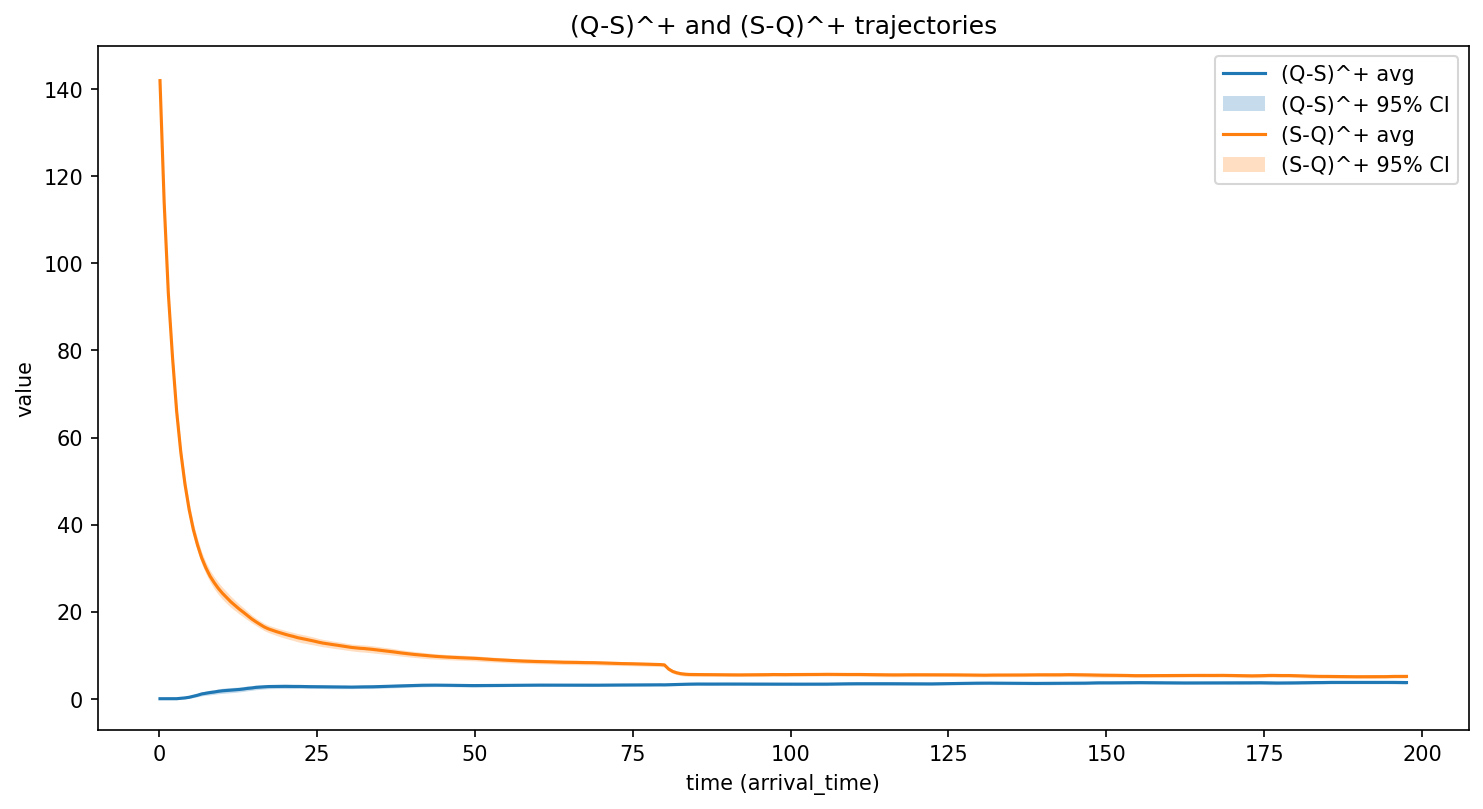}
    {\scriptsize\caption*{(c) $Q(t)$ and $S(t)$, $\lambda = 100,\, \mu = 1 , \, \theta = 1,\, p=0.5,\, \gamma = 1,\, c=150$ (near-critical UL)}}
\end{minipage}

\caption{$(Q(t)-S(t))^+$ and $(S(t)-Q(t))^+$ trajectories for 3 parameter sets and 100 simulation runs of $10,000$ customers.}
\label{fig:Q-S_pos-trajectories}
\end{figure}

\begin{figure}[t]
\centering

\begin{minipage}{0.48\textwidth}
    \captionsetup{font=scriptsize}
    \centering
    \includegraphics[width=\linewidth]{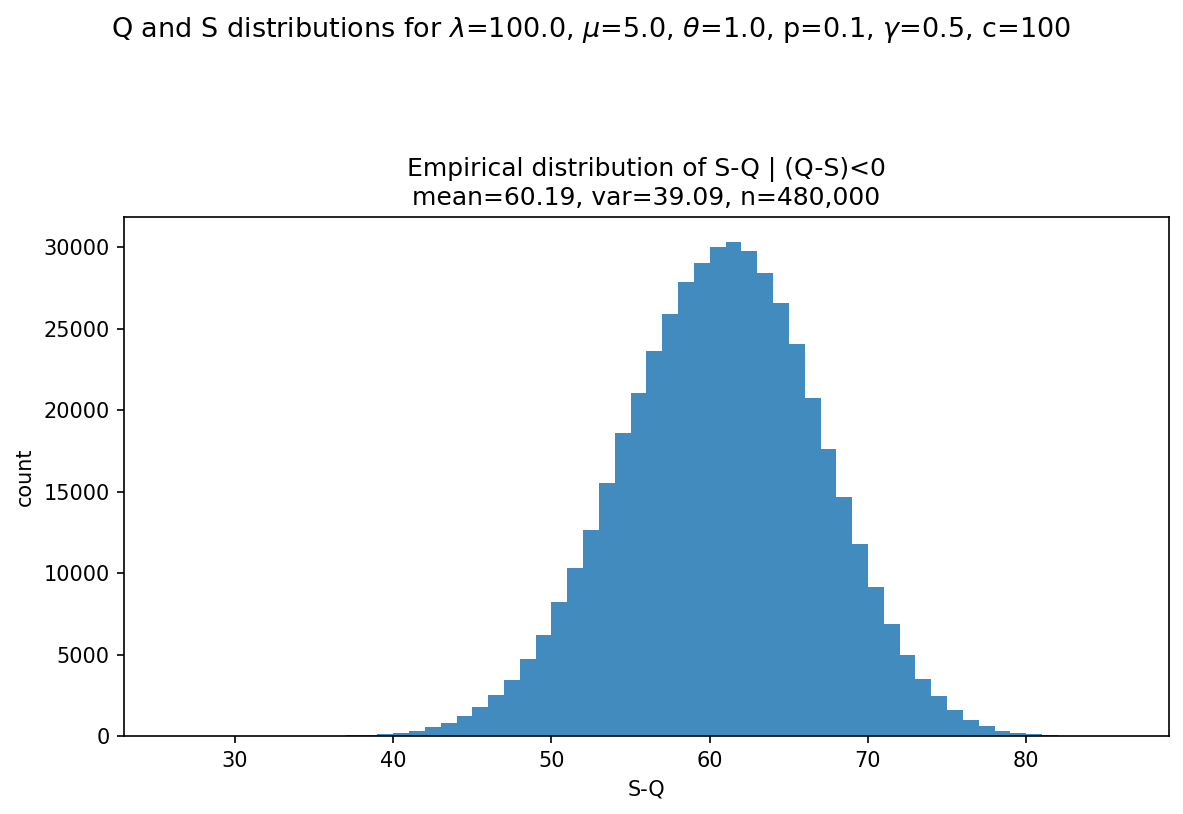}
    {\scriptsize \caption*{(a) Distribution of $(S(t)-Q(t))^+$, $\lambda = 100,\, \mu = 5 , \, \theta = 1,\, p=0.1,\, \gamma = 0.5$ and $c=100$ (UL)}}
\end{minipage}
\hfill
\begin{minipage}{0.48\textwidth}
    \captionsetup{font=scriptsize}
    \centering
    \includegraphics[width=\linewidth]{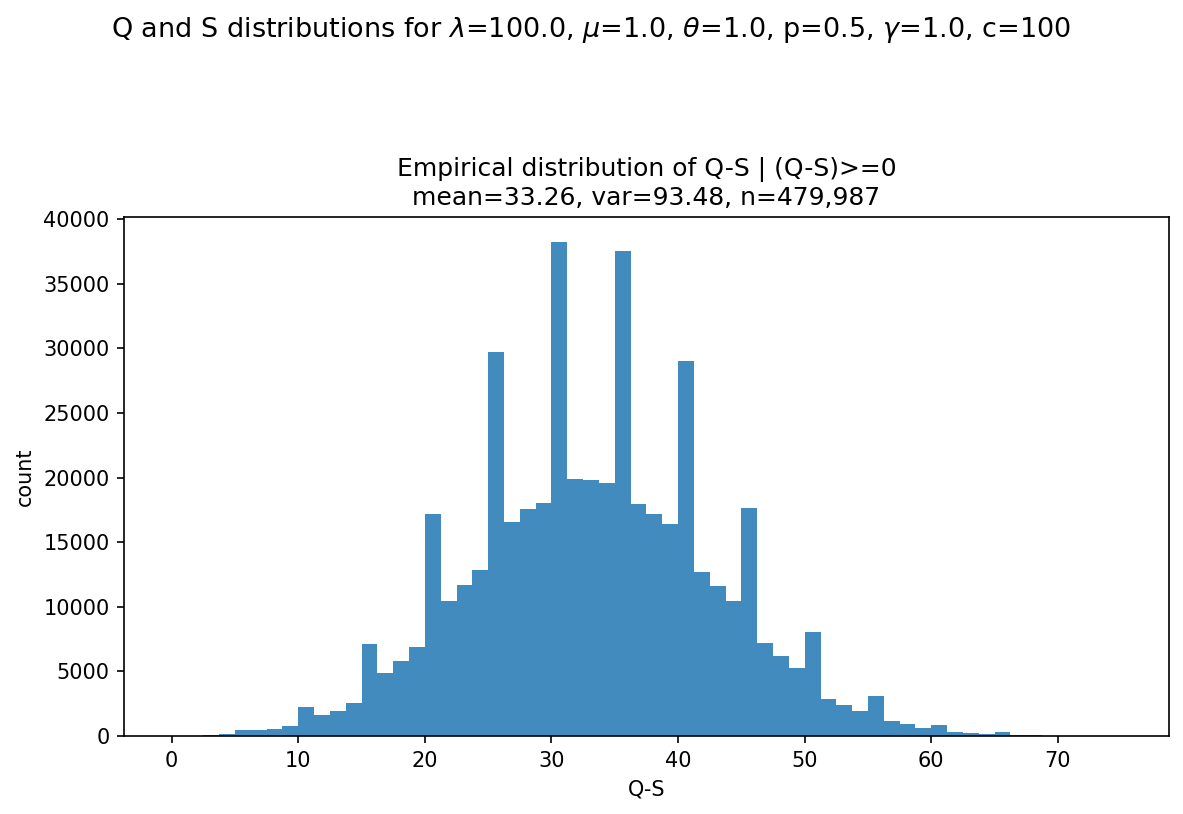}
    {\scriptsize\caption*{(b) Distribution of $(Q(t)-S(t))^+$, $\lambda = 100,\, \mu = 1 , \, \theta = 1,\, p=0.5,\, \gamma = 1$ and $c=100$ (OL)}}
\end{minipage}

\vspace{0.5cm}

\begin{minipage}{0.48\textwidth}
    \captionsetup{font=scriptsize}
    \centering
    \includegraphics[width=\linewidth]{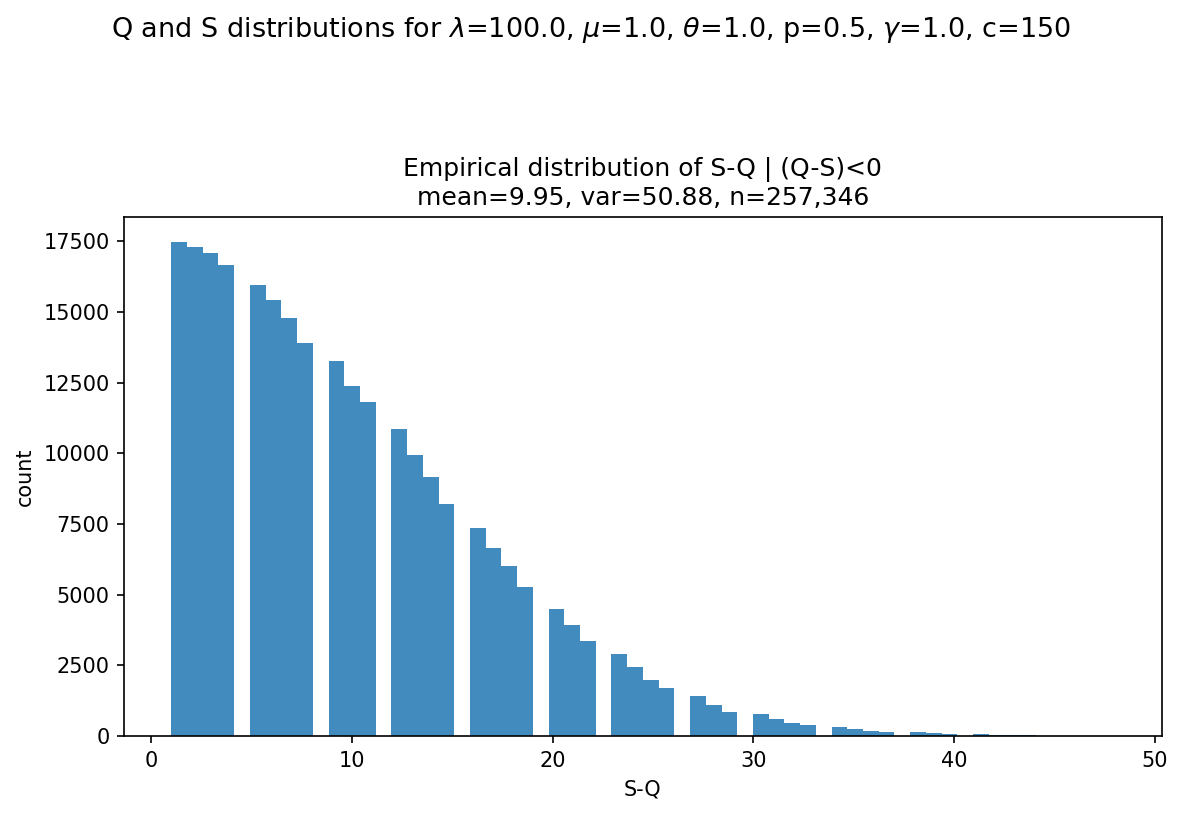}
    {\scriptsize\caption*{(c) Distribution of $(Q(t)-S(t))^+$, $\lambda = 100,\, \mu = 1 , \, \theta = 1,\, p=0.5,\, \gamma = 1,\, c=150$ (near-critical UL)}}
\end{minipage}

\caption{Gap process distribution}
\label{fig:Q-S_pos-distribution}
\end{figure}

In queueing systems with stochastic server availability, performance measures are often expressed in terms of excess demand or excess capacity.
In particular, the random variables $(Q-S)^+$ (similar to the gap process we looked at in Section~\ref{sec:diffusion}) and $(S-Q)^+$ represent, respectively, the instantaneous congestion beyond available capacity and the amount of idle service capacity. Moreover, we also know that the variance of $(Q-S)^+$ is key in studying the tail risk of abandonment, which is directly controlled by CVaR-type objectives.
While earlier sections focus on mean performance measures, understanding the variability of these quantities is important for assessing the reliability of staffing rules and the dispersion of delay and abandonment outcomes.
Under the bivariate-normal approximation for $(Q,S)$, both quantities reduce to the positive part of a Gaussian random variable, allowing their second-order properties to be characterized in closed form.

Notice that the moments of the positive part of a normal random variable are well known. Let \(X^{+} = \max\{X,0\}\), were \(X \sim \mathcal{N}(\mu,\sigma^{2})\), and define \(\alpha = \mu/\sigma\). Then
\begin{equation}\label{eq:positive-part-var}
    \begin{split}
        \mathbb{E}[X^{+}] &= \mu\,\Phi(\alpha) + \sigma\,\phi(\alpha),\\
        \mathbb{E}[(X^{+})^{2}] &= (\mu^{2}+\sigma^{2})\Phi(\alpha)
    + \mu\sigma\,\phi(\alpha),\\
    \mathrm{Var}(X^{+}) &= (\mu^{2}+\sigma^{2})\Phi(\alpha)
    + \mu\sigma\,\phi(\alpha)
    - \big(\mu\Phi(\alpha)+\sigma\phi(\alpha)\big)^{2}.
    \end{split}
\end{equation}
Here, \(\phi(\cdot)\) and \(\Phi(\cdot)\) denote the standard normal density and distribution functions, respectively. A derivation is provided in Appendix~\ref{app:normal_pos_sec_moment}. 

Applying \eqref{eq:positive-part-var} with \(X\sim\mathcal N(m,\sigma^2)\) yields
\[
\mathrm{Var}\!\bigl((Q-S)^{+}\bigr),
\]
while replacing \(m\) by \(-m\) yields
\[
\mathrm{Var}\!\bigl((S-Q)^{+}\bigr),
\]
where $m$ and $\sigma_D$ are as defined in \ref{sec:abandon}

\subsubsection{Proof of identities in \eqref{eq:positive-part-var}}\label{app:normal_pos_sec_moment}
\begin{proof}
Let $X\sim \mathcal N(\mu,\sigma^2)$ and define $X^+=\max\{X,0\}$. Write $X=\mu+\sigma Z$ with
$Z\sim \mathcal N(0,1)$, and set $\alpha=\mu/\sigma$. Then
\[
X^+ = (\mu+\sigma Z)\,\mathbf 1\{ \mu+\sigma Z>0\} = (\mu+\sigma Z)\,\mathbf 1\{Z>-\alpha\}.
\]

\medskip
Using linearity,
\[
\E[X^+] = \mu\,\Pr(Z>-\alpha) + \sigma\,\E\big[Z\,\mathbf 1\{Z>-\alpha\}\big].
\]
Since $\Pr(Z>-\alpha)=\Phi(\alpha)$, it remains to compute $\E[Z\mathbf 1\{Z>a\}]$ for $a=-\alpha$.
For $Z\sim \mathcal N(0,1)$ with density $\phi$, integration by parts gives
\[
\E\big[Z\,\mathbf 1\{Z>a\}\big]
=\int_a^\infty z\,\phi(z)\,dz
=\Big[-\phi(z)\Big]_{z=a}^{\infty}
=\phi(a),
\]
because $\phi(z)\to 0$ as $z\to\infty$ and $\phi'(z)=-z\phi(z)$.
With $a=-\alpha$ and $\phi(-\alpha)=\phi(\alpha)$, we obtain
\[
\E[X^+] = \mu\,\Phi(\alpha)+\sigma\,\phi(\alpha).
\]
\medskip
Similarly as how we computed the previous expectation, we have that,
\begin{align*}
    \E[(X^+)^2] &=\E\big[(\mu+\sigma Z)^2\,\mathbf 1\{Z>-\alpha\}\big] \\
    &=\mu^2\,\Pr(Z>-\alpha) + 2\mu\sigma\,\E[Z\mathbf 1\{Z>-\alpha\}]
    +\sigma^2\,\E[Z^2\mathbf 1\{Z>-\alpha\}].
\end{align*}
We already have $\Pr(Z>-\alpha)=\Phi(\alpha)$ and $\E[Z\mathbf 1\{Z>-\alpha\}]=\phi(\alpha)$.
It remains to compute $\E[Z^2\mathbf 1\{Z>a\}]$ with $a=-\alpha$:
\[
\int_a^\infty z^2\phi(z)\,dz
=\int_a^\infty \big(\phi(z) - (z\phi(z))'\big)\,dz
=\int_a^\infty \phi(z)\,dz - \Big[z\phi(z)\Big]_{z=a}^{\infty}
=\bigl(1-\Phi(a)\bigr) + a\phi(a).
\]
(Here we used $(z\phi(z))'=\phi(z)-z^2\phi(z)$ and again $z\phi(z)\to 0$ as $z\to\infty$.)
Substituting $a=-\alpha$ yields
\[
\E[Z^2\mathbf 1\{Z>-\alpha\}]
= 1-\Phi(-\alpha) + (-\alpha)\phi(-\alpha)
= \Phi(\alpha) - \alpha\phi(\alpha).
\]
Therefore
\begin{align*}
\E[(X^+)^2]
&=\mu^2\,\Phi(\alpha) + 2\mu\sigma\,\phi(\alpha)
+\sigma^2\bigl(\Phi(\alpha)-\alpha\phi(\alpha)\bigr) \\
&=(\mu^2+\sigma^2)\Phi(\alpha) + \mu\sigma\,\phi(\alpha),
\end{align*}
since $\alpha=\mu/\sigma$ implies $\sigma^2\alpha\phi(\alpha)=\mu\sigma\phi(\alpha)$.

Finally,
\[
\Var(X^+) = \E[(X^+)^2]-\big(\E[X^+]\big)^2
=(\mu^{2}+\sigma^{2})\Phi(\alpha)
+\mu\sigma\,\phi(\alpha)
-\big(\mu\Phi(\alpha)+\sigma\phi(\alpha)\big)^{2}.
\]
This completes the proof.
\end{proof}

\subsection{Empirical distributions of excess demand and idle capacity}
\label{app:positive-part-distributions}

The positive-part processes
\[
W(t):=(Q(t)-S(t))^+,
\qquad
I(t):=(S(t)-Q(t))^+,
\]
measure excess demand and idle capacity, respectively. Their distributions
complement the mean-based delay and abandonment metrics studied in the main
text. In particular, the upper tail of $W(t)$ is relevant for reliability and
risk-sensitive congestion measures, whereas the distribution of $I(t)$
quantifies variability in unused capacity. Under the joint-normal
approximation, their moments and tail probabilities can be computed from the
distribution of $Q-S$ and may be used to construct quantile- or CVaR-based
staffing objectives. We do not pursue such objectives here.

Figures~\ref{fig:Q-S_pos-trajectories}
and~\ref{fig:Q-S_pos-distribution} illustrate how these quantities change
across staffing regimes. In the underloaded regime, $W(t)$ is frequently zero
and most fluctuations appear in idle capacity; in the overloaded regime,
$I(t)$ is frequently zero and excess demand dominates. Near the critical
capacity, the system switches between the two states, producing point masses
at zero and right-skewed positive values for both quantities. These
distributions illustrate why mean performance measures alone may not fully
describe congestion and capacity risk near the boundary.

\end{document}